\newtheorem{theorem}{Theorem}
\numberwithin{theorem}{section}
\newtheorem{example}[theorem]{Example}
\newtheorem{thm}{Theorem}
\newtheorem{Def}[thm]{Definition}
\numberwithin{thm}{section}
\newtheorem{ex}{Theorem}
\newtheorem{Ex}[ex]{Example}
\numberwithin{ex}{section}
\newtheorem{lem}{Theorem}
\newtheorem{lemma}[lem]{Lemma}
\numberwithin{lem}{section}
\numberwithin{rem}{section}
\def\FF{\mathbb{F}}
\def\I{{\mathcal{I}}}
\def\R{{\mathcal{R}}}
\begin{document}

\sloppy
\title{Bessmertny{\u\i} realizations of symmetric multivariate rational matrix functions over any field}
\author{Jason Elsinger
\\Department of Mathematics\\Charles E.\ Jordan High School\\Durham, NC\\ \\Ian Orzel
\\Department of Computer Science\\University of Copenhagen\\Copenhagen, Denmark\\ \\Aaron Welters
\\Department of Mathematics and Systems Engineering\\Florida Institute of Technology\\Melbourne, FL}
\date{}
\maketitle

\begin{abstract}
In this paper, we prove the following. First, every square matrix whose entries are multivariable rational functions over a field $\mathbb{F}$ has a Bessmertny{\u\i} realization, i.e., is the Schur complement of an affine linear square matrix pencil with coefficients in $\mathbb{F}$. Second, if the matrix is also symmetric and the characteristic of the field $\mathbb{F}$ is not two then it has a symmetric Bessmertny{\u\i} realization (i.e., the pencil can be chosen to consist of symmetric matrices) and counterexamples are given to prove this statement is false in general for fields of characteristic two. Third, for fields of characteristic two (e.g., binary or Boolean field), we completely characterize those functions that have a symmetric Bessmertny{\u\i} realization. Finally, analogous results hold when restricted to the class of homogeneous degree-one rational functions. To solve these realization problems, i.e., finding such structured Bessmertny{\u\i} realizations for a given multivariate rational function, we use state-space methods from systems theory to produce realizations for algebraic operations on Schur complements such as sums, products, inverses, and symmetrization, which become the elementary building blocks of our constructions. Further complications arise over fields of characteristic two, so a large part of the paper is devoted to developing additional methods to decide if the symmetric realization problem can be solved and, if so, to construction the symmetric realization for a given symmetric rational matrix function. Our motivations are discussed in the context of multidimensional linear systems theory on generalizing state-space representations for rational functions including the Givone-Roesser and Fornasini-Marchesini realizations.
\end{abstract}

\section{Introduction}\label{sec:intro}
Let $\mathbb{F}$ be a field with characteristic denoted by $\operatorname{char}(\mathbb{F})$ and set $z=(z_1,\ldots,z_n)$, where $z_1,\ldots,z_n$ are $n$ indeterminates. Then $\mathbb{F}[z]$ will denote the integral domain of all polynomials in $z_1,\ldots,z_n$ with coefficients in $\mathbb{F}$. Next, we will denote the field of rational functions as $\mathbb{F}(z)$ and the subset of all homogeneous rational functions of degree $d$ as $\mathbb{F}(z)_d$.

The set of all $m\times k$ matrices with entries in $\mathbb{F}$ will be denoted by $\mathbb{F}^{m\times k}$, and if $A=[a_{ji}]\in \mathbb{F}^{m\times k}$ is a matrix then $A^T=[a_{ij}]\in \mathbb{F}^{m\times k}$ will denote its transpose \{and, similarly for $\FF[z]$, $\FF(z)$, or $\FF(z)_d$\}.
We now introduce the following important definitions.

\begin{Def}
An {\upshape{(}}affine{\upshape{)}} \textbf{linear matrix pencil} is an element $A(z)\in\mathbb{F}[z]^{m\times m}$ of the form \[A(z) = A_0 + z_1A_1 + \cdots + z_nA_n,\] where $A_0, A_1, \ldots, A_n\in \mathbb{F}^{m\times m}$. If $A_j$ is a symmetric matrix for each $j=0,1,\ldots, n$, then $A(z)$ is called a \textbf{linear symmetric matrix pencil}. If $A_0=0$, then $A(z)$ is called a \textbf{linear homogeneous matrix pencil}.
\end{Def}

\begin{Def} 
An element $F(z)\in\mathbb{F}(z)^{k\times k}$ is \textbf{Bessmertny\u{\i} realizable} {\upshape{[}}or has a \textbf{Bessmertny\u{\i} realization {\upshape{(}}BR{\upshape{)}}}{\upshape{]}} if there exists a linear matrix pencil $A(z)=[A_{ij}(z)]_{i,j=1,2}\in \mathbb{F}[z]^{m\times m}$ partitioned into a $2\times 2$ block matrix form such that $F(z)$ is the Schur complement of $A(z)$ with respect to $A_{22}(z)$, that is, 
\begin{align*}
    F(z)=A(z)/A_{22}(z)=A_{11}(z)-A_{12}(z)[A_{22}(z)]^{-1}A_{21}(z),
\end{align*}
in which case we call $A(z)$ the \textbf{Bessmertny\u{\i} realizer} of $F(z)$.
If $A(z)$ is a linear symmetric matrix pencil then $F(z)$ is said to have a \textbf{symmetric Bessmertny\u{\i} realization {\upshape{(}}SBR{\upshape{)}}}. If $A(z)$ is a linear homogeneous matrix pencil then $F(z)$ is said to have a \textbf{homogeneous Bessmertny\u{\i} realization {\upshape{(}}hBR{\upshape{)}}}. If $A(z)$ is a linear homogeneous and symmetric matrix pencil then $F(z)$ is said to have a \textbf{homogeneous symmetric Bessmertny\u{\i} realization {\upshape{(}}hSBR{\upshape{)}}}.
\end{Def}

In this paper, we prove the following theorem.
\begin{theorem}\label{thm:BessmertnyiRealizabilityThm}
Let $F(z)\in\mathbb{F}(z)^{k\times k}$. Then:
\begin{itemize}
\item[$($a$)$] $F(z)$ has a BR.
\item[$($b$)$] $F(z)$ has a hBR if and only if $F(z)\in \mathbb{F}(z)_1^{k\times k}$. 
\item[$($c$)$] $F(z)$ has a SBR if and only if $F(z)^T=F(z)$ and either one of the following three conditions hold: {\upshape{(}}i{\upshape{)}} $n=1;$ {\upshape{(}}ii{\upshape{)}} $\operatorname{char}(\mathbb{F})\not=2;$ {\upshape{(}}iii{\upshape{)}} $n\geq 2$, $\operatorname{char}(\mathbb{F})=2,$ and the diagonal entries of $F(z)$ are in the subspace
\begin{gather}
    \operatorname{span}\{q_0+z_1q_1^2+\ldots+z_nq_n^2:q_i\in \mathbb{F}(z)\}.\label{SBRSubspaceChar2}
\end{gather}
\item[$($d$)$] $F(z)$ has a hSBR if and only if $F(z)\in\mathbb{F}(z)_1^{k\times k}$, $F(z)^T=F(z)$, and either one of the following three conditions hold: {\upshape{(}}i{\upshape{)}} $n=1,2;$ {\upshape{(}}ii{\upshape{)}} $\operatorname{char}(\mathbb{F})\not=2;$ {\upshape{(}}iii{\upshape{)}} $n\geq 3$, $\operatorname{char}(\mathbb{F})=2,$ and the diagonal entries of $F(z)$ are in the subspace
\begin{gather}
    \operatorname{span}\{z_1q_1^2+\ldots+z_nq_n^2:q_i\in \mathbb{F}(z)_0\}.\label{hSBRSubspaceChar2}
\end{gather}
\end{itemize}
\end{theorem}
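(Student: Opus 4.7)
The plan is to establish a library of realization constructions for basic algebraic operations on Schur complements---matrix sum, matrix product, inverse, and, where possible, symmetrization---and then reduce each of (a)--(d) to showing that every matrix of the stated form is a finite composition of atomic pieces (scalars and the indeterminates $z_j$) using the operations that keep us inside the relevant subclass of BR/SBR/hBR/hSBR realizable functions. This closure toolkit is inspired by standard state-space tricks in multidimensional systems theory.

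For part (a), every $F(z)\in\mathbb{F}(z)^{k\times k}$ is a finite rational expression in scalars and the $z_j$'s, so once one verifies that $F_1+F_2$, $F_1F_2$, and $F^{-1}$ admit BRs whenever their operands do, induction on expression length yields (a). For (b), a short degree-counting argument shows the Schur complement of a homogeneous linear pencil lies in $\mathbb{F}(z)_1^{k\times k}$ (since $A_{22}^{-1}$ contributes degree $-1$, keeping $A_{12}A_{22}^{-1}A_{21}$ of degree one), which gives the easy direction. For the converse I would restrict the (a) toolkit to those operations that preserve degree-one homogeneity and check that the restricted toolkit still suffices to generate every $F\in\mathbb{F}(z)_1^{k\times k}$ from the homogeneous atoms $z_j I$.

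Part (c) is the real substance. Necessity of $F(z)^T = F(z)$ is clear from the symmetry of Schur complements of symmetric pencils. When $\operatorname{char}(\mathbb{F})\neq 2$ (case (ii)), any BR from (a) can be symmetrized by averaging the pencil with its transpose, producing an SBR of $\tfrac12(F+F^T)=F$; case (i) ($n=1$) is handled by a classical univariate construction that works in any characteristic. Case (iii) is where new ideas are needed. The key identity is that in characteristic two, with $Y$ symmetric, the off-diagonal cross terms in $(XYX^T)_{ii}$ cancel in pairs, leaving $(XYX^T)_{ii}=\sum_j Y_{jj}X_{ij}^2$. Applied iteratively to $F_{ii}=(A_{11})_{ii}-(A_{12}A_{22}^{-1}A_{12}^T)_{ii}$, together with the Frobenius identity $(a+b)^2=a^2+b^2$ on the linear entries of $A_{12}$, this forces $F_{ii}$ to lie in the subspace \eqref{SBRSubspaceChar2}, establishing necessity. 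For sufficiency I will construct explicit symmetric pencils whose Schur complements realize each generator $q_0+z_1q_1^2+\ldots+z_nq_n^2$ by sandwiching recursively built SBRs of the $q_i$'s inside symmetric blocks, and handle off-diagonal entries---essentially unconstrained in characteristic two because of the pairing phenomenon above---via symmetric adaptations of the sum and product constructions.

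Part (d) is the homogeneous analogue of (c): the forward directions combine those of (b) and (c), while every construction must be reworked to stay homogeneous of degree one. The loss of the constant term $q_0$ explains the shape of the subspace in (d)(iii), and the stricter degree constraint is what forces the threshold $n\geq 3$ in characteristic two. The main obstacle throughout will be the sufficiency directions of (c)(iii) and (d)(iii): engineering symmetric (respectively, homogeneous symmetric) pencils whose Schur complements realize precisely the elements of the characteristic-two subspaces while maintaining the required off-diagonal structure and degree constraints. This step goes beyond the generic closure toolkit and will require careful combinatorial design of the pencils, together with a converse argument showing that no off-diagonal freedom can compensate for a diagonal entry lying outside the prescribed span.
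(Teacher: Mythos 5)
Your plan for (a), (b), and cases (i)--(ii) of (c) and (d) is essentially the paper's own: build a Schur-complement closure calculus (sums, products, inverses, symmetrization, homogenization), then generate everything from the atoms $1$ and $z_j$. That part is fine. The interesting divergence is your proposed necessity argument for (c)(iii), and that is where I see a real gap.

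Your identity $(XYX^T)_{ii}=\sum_j Y_{jj}X_{ij}^2$ for symmetric $Y$ in characteristic $2$ is correct, and applying it to $F_{ii}=(A_{11})_{ii}-(A_{12}A_{22}^{-1}A_{12}^T)_{ii}$ does yield $F_{ii}=(A_{11})_{ii}-\sum_j (A_{22}^{-1})_{jj}(A_{12})_{ij}^2$. But you then only invoke Frobenius on the linear entries of $A_{12}$; you say nothing about the factors $(A_{22}^{-1})_{jj}$, which are rational functions that are by no means obviously in the span $\operatorname{span}\{q_0^2+z_1q_1^2+\cdots+z_nq_n^2\}$. This is exactly where the work lies, and ``applied iteratively'' does not name a mechanism. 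What actually closes the gap is a genuine induction on the size of $A_{22}$: one observes that the subspace is closed under multiplication by squares and (since $1/t=t\cdot(1/t)^2$) under taking reciprocals; that $(A_{22}^{-1})_{jj}$ is either zero (if the complementary principal minor is singular) or the reciprocal of the Schur complement of $A_{22}$ with respect to a $(2,2)$-block of strictly smaller size; and that that smaller Schur complement lies in the subspace by the induction hypothesis. If you spell this out, your argument works and is genuinely different from the paper's route, which instead passes through the quotient rings $\mathcal{R}(\ell^2)=\mathbb{F}[z]/\langle z_i^2+\ell_i^2\rangle$ and an algorithmic reduction (CLEAN/ADD/ISOLATE) borrowed from the Grenet--Monteil--Thomass\'e SDR analysis, culminating in the statement that any realizable element of $\mathcal{R}(\ell^2)$ is linear (Theorem~\ref{thm:ring_realizable_linear}). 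The quotient-ring machinery is heavier but is reused by the paper to justify the counterexamples and to finish (d)(iii).

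For sufficiency in (c)(iii) your sandwiching idea is the right one and matches the paper's approach via the ample Jordan subspace $H_0(\mathcal{A},T)$, where the key closure property $xH_0x^T\subseteq H_0$ is exactly your sandwich construction implemented through Lemmas~\ref{LemMatrixMultipSchurComplements} and~\ref{LemInvOfASchurCompl}. For (d)(iii) your plan to ``rework every construction to stay homogeneous'' is workable but unnecessarily heavy: the paper instead dehomogenizes, setting $z_n=1$ to reduce an hSBR claim about $F\in\mathbb{F}(z)_1^{k\times k}$ to an SBR claim about $F(z_1,\ldots,z_{n-1},1)$, applies (c)(iii), and rehomogenizes via Lemma~\ref{lem:SchurComplHomogen}. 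You would do well to adopt that shortcut rather than redo the whole toolkit under a degree constraint.
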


Let us now consider some examples which help to explain the main reason why the case $\operatorname{char}(\mathbb{F})=2$, i.e., the scalar $2$ not being invertible in $\mathbb{F}$, causes difficulties in the symmetric realization problem.
\begin{Ex}
    Suppose $\operatorname{char}(\mathbb{F})\not=2$. Then
    \begin{gather*}
        F(z)=\begin{bmatrix}
        z_1z_2
    \end{bmatrix}\in\mathbb{F}(z)^{1\times 1}
    \end{gather*}
    $[$where $z=(z_1,z_2)]$ has a SBR:
    \begin{gather*}
        F(z)=A(z)/A_{22}(z),\\
        A(z)= 
\left[\begin{array}{c;{2pt/2pt} c c}
A_{11}(z) & A_{12}(z) \\ \hdashline[2pt/2pt]
A_{21}(z) & A_{22}(z)
\end{array}\right]
=
\left[\begin{array}{c; {2pt/2pt} c c}
0 & \frac{1}{4}(z_1+z_2) & -\frac{1}{4}(z_1-z_2) \\\hdashline[2pt/2pt]
\frac{1}{4}(z_1+z_2) & -\frac{1}{4} & 0 \\ 
-\frac{1}{4}(z_1-z_2) & 0 & \frac{1}{4}
\end{array}\right],
    \end{gather*}
    where $A(z)$ is a linear symmetric matrix pencil with
    \begin{gather*}
        A(z)=A_0+z_1A_1+z_2A_2,\\
        A_0=\begin{bmatrix}
0 & 0 & 0\\
0 & -\frac{1}{4} & 0\\
0 & 0 & \frac{1}{4}
\end{bmatrix},\;
A_1=\begin{bmatrix}
0 & \frac{1}{4} & -\frac{1}{4}\\
\frac{1}{4} & 0 & 0\\
-\frac{1}{4} & 0 & 0
\end{bmatrix},\;
A_2=\begin{bmatrix}
0 & \frac{1}{4} & \frac{1}{4}\\
\frac{1}{4} & 0 & 0\\
\frac{1}{4} & 0 & 0
\end{bmatrix}.
    \end{gather*}
\end{Ex}

\begin{Ex}
    Suppose $\operatorname{char}(\mathbb{F})\not=2$. Then
    \begin{gather*}
        F(z)=\begin{bmatrix}
        \displaystyle\frac{z_1z_2}{z_3}
    \end{bmatrix}\in\mathbb{F}(z)_1^{1\times 1}
    \end{gather*}
    $[$where $z=(z_1,z_2,z_3)]$ has a hSBR:
    \begin{gather*}
        F(z)=A(z)/A_{22}(z),\\
        A(z)= 
\left[\begin{array}{c;{2pt/2pt} c c}
A_{11}(z) & A_{12}(z) \\ \hdashline[2pt/2pt]
A_{21}(z) & A_{22}(z)
\end{array}\right]
=
\left[\begin{array}{c; {2pt/2pt} c c}
0 & \frac{1}{4}(z_1+z_2) & -\frac{1}{4}(z_1-z_2) \\\hdashline[2pt/2pt]
\frac{1}{4}(z_1+z_2) & -\frac{1}{4}z_3 & 0 \\ 
-\frac{1}{4}(z_1-z_2) & 0 & \frac{1}{4}z_3
\end{array}\right],
    \end{gather*}
    where $A(z)$ is a linear homogeneous and symmetric matrix pencil with
    \begin{gather*}
        A(z)=z_1A_1+z_2A_2+z_3A_3,\\
A_1=\begin{bmatrix}
0 & \frac{1}{4} & -\frac{1}{4}\\
\frac{1}{4} & 0 & 0\\
-\frac{1}{4} & 0 & 0
\end{bmatrix},\;
A_2=\begin{bmatrix}
0 & \frac{1}{4} & \frac{1}{4}\\
\frac{1}{4} & 0 & 0\\
\frac{1}{4} & 0 & 0
\end{bmatrix},\;
A_3=\begin{bmatrix}
0 & 0 & 0\\
0 & -\frac{1}{4} & 0\\
0 & 0 & \frac{1}{4}
\end{bmatrix}.
    \end{gather*}
\end{Ex}

The next examples (justified in Sec.\ \ref{sec:justifyingExamples}) also show that the hypotheses cannot be dropped in part (iii) of statements (c) and (d) of Theorem \ref{thm:BessmertnyiRealizabilityThm}. 
\begin{Ex}
\label{thm:NoSymBessRealzForSimpleProd}
    If $\operatorname{char}(\mathbb{F})=2$ then 
    \begin{gather*}
        F(z)=\begin{bmatrix}
        z_1z_2
    \end{bmatrix}\in\mathbb{F}(z)^{1\times 1}
    \end{gather*}
    $[$where $z=(z_1,z_2)]$ is a symmetric matrix which does not have a SBR.
\end{Ex}

\begin{Ex}\label{thm:NoHomoSymBessRealzForSimpleProd}
    If $\operatorname{char}(\mathbb{F})=2$ then 
    \begin{gather*}
        F(z)=\begin{bmatrix}
        \frac{z_1z_2}{z_3}
    \end{bmatrix}\in\mathbb{F}(z)_1^{1\times 1}
    \end{gather*}
    $[$where $z=(z_1,z_2,z_3)]$ is a symmetric matrix of homogeneous degree-one rational functions which does not have a SBR.
\end{Ex}

    Theorem \ref{thm:BessmertnyiRealizabilityThm} is an extension of the \textit{Bessmertny\u{\i} Realization Theorem} \cite[Theorem 1.1]{02MB}, \cite[Theorem 2]{21SWa} from the complex field $\mathbb{C}$ to any field $\mathbb{F}$. One motivation for this comes from a theorem in \cite{10GKKP, 12RQ, 13GMT, 21SWb} on symmetric determinantal representations (SDRs) of polynomials that extended a result in \cite{06HMV} from the real field $\mathbb{R}$ to any field $\mathbb{F}$. In the case $\operatorname{char}(\mathbb{F}) = 2$, an example was constructed in \cite{13GMT} of a polynomial having no SDR\footnote{It is an open problem (see \cite{13GMT}) to determine which polynomials in $\mathbb{F}[z]$ have an SDR if $\operatorname{char}(\mathbb{F}) = 2$.}. These results on SDRs can be summarized as follows.
\begin{theorem}\label{ThmSymmDetReprExt}
Let $\mathbb{F}$ be a field. Then:
\begin{itemize}
    \item[$($a$)$] If $\operatorname{char}(\mathbb{F})\not = 2$, then any polynomial $p(z)\in\mathbb{F}[z]$ [where $z=(z_1,\ldots,z_n)$] has a symmetric determinantal representation $($SDR$)$, i.e., there exists an affine linear matrix pencil $A_0+\sum_{i=1}^nz_iA_i$ with symmetric matrices $A_0,A_1,\ldots, A_n\in \mathbb{F}^{m\times m}$ such that
\begin{align*}
p(z)=\det \left(A_0+\sum_{i=1}^nz_iA_i\right).
\end{align*}
    \item[$($b$)$] If $\operatorname{char}(\mathbb{F}) = 2$, then the polynomial $p(z_1,z_2,z_3)=z_1z_2+z_3$ has no SDR representation.
\end{itemize}
\end{theorem}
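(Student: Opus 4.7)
For part (a) the plan is a structural induction to show that the set of polynomials in $\mathbb{F}[z]$ admitting an SDR is a subring containing every affine linear polynomial, and therefore equals all of $\mathbb{F}[z]$. First, any affine linear $L(z)=a_{0}+\sum a_{i}z_{i}$ has the trivial $1\times 1$ SDR $A(z)=[L(z)]$. Second, closure under products follows from $\det(A\oplus B)=\det(A)\det(B)$ together with the observation that the block-diagonal sum of two symmetric pencils is a symmetric pencil. Third---and this is where the hypothesis $\operatorname{char}(\mathbb{F})\ne 2$ enters essentially---closure under sums requires a genuine construction: given $p=\det(A)$ and $q=\det(B)$, I would build a larger symmetric pencil $C$ with $\det(C)=p+q$ by bordering $A\oplus B$ with additional affine linear rows/columns whose entries involve $\tfrac{1}{2}$, so that the target $p+q$ emerges from a Schur-complement / cofactor expansion (reflecting identities like $4ab=(a+b)^{2}-(a-b)^{2}$ that are unavailable in char $2$). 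Since every polynomial is a sum of scalar multiples of products of variables and each variable is affine linear, the three closure properties yield the SDR for every polynomial.

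For part (b) I argue by contradiction. Suppose $p(z)=z_{1}z_{2}+z_{3}=\det(A(z))$ for some $m\times m$ symmetric pencil $A(z)=A_{0}+z_{1}A_{1}+z_{2}A_{2}+z_{3}A_{3}$ over a field $\mathbb{F}$ with $\operatorname{char}(\mathbb{F})=2$. Decompose $A(z)=D(z)+S(z)$ with $D(z)$ diagonal and $S(z)$ symmetric with zero diagonal---equivalently, alternating in char $2$. The standard expansion $\det(D+N)=\sum_{T}(\prod_{i\in T}D_{ii})\det(N|_{T^{c}})$ for $N$ zero-diagonal, combined with $\det(\text{alternating})=\operatorname{Pf}(\cdot)^{2}$, yields
\[
\det(A(z))=\sum_{T\subseteq[m],\,|T^{c}|\text{ even}}\Big(\prod_{i\in T}D_{ii}(z)\Big)\,\operatorname{Pf}(S(z)|_{T^{c},T^{c}})^{2}.
\]
In char $2$, every squared Pfaffian lies in $\mathbb{F}^{2}[z_{1}^{2},\dots,z_{n}^{2}]$ (Frobenius), so it contributes nothing to the $z_{k}$ or $z_{k}z_{l}$ (with $k\ne l$) coefficients of $\det(A(z))$; those coefficients arise entirely from the diagonal products. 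Set $d_{i}:=D_{ii}(0)$, $c_{T}:=\operatorname{Pf}(S(0)|_{T^{c}})^{2}$, and $\Phi(d):=\sum_{T}c_{T}\prod_{i\in T}d_{i}=\det(A(0))$; let $U:=\nabla\Phi\in\mathbb{F}^{m}$ and let $W$ be the $m\times m$ matrix with $W_{ij}=\partial^{2}\Phi/\partial d_{i}\partial d_{j}$ for $i\ne j$ and $W_{ii}=0$. Writing $v^{(k)}\in\mathbb{F}^{m}$ for the vector of $z_{k}$-coefficients of the diagonals $D_{ii}(z)$, matching coefficients of $p$ forces $\Phi(d)=0$, $v^{(k)}\cdot U=\delta_{k,3}$ for $k\in\{1,2,3\}$, and $(v^{(1)})^{T}Wv^{(2)}=1$, $(v^{(k)})^{T}Wv^{(3)}=0$ for $k\in\{1,2\}$. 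Moreover, evaluating $\sum_{i}d_{i}U_{i}=\sum_{T}|T|\,c_{T}\prod_{i\in T}d_{i}$ and using that every $T$ satisfies $|T|\equiv m\pmod{2}$ (combined with $\Phi(d)=0$ when $m$ is odd) yields the Euler-type identity $v^{(0)}\cdot U=0$.

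When $m=3$ this is already contradictory: $U\ne 0$ (from $v^{(3)}\cdot U=1$), so $U^{\perp}\subset\mathbb{F}^{3}$ is two-dimensional; the three vectors $v^{(0)},v^{(1)},v^{(2)}\in U^{\perp}$ must be linearly dependent, whence $\det[v^{(0)}\mid v^{(1)}\mid v^{(2)}]=0$; but a direct computation of $W$ (whose off-diagonal entries in the $m=3$ case are simply $d_{k}$) identifies this $3\times 3$ determinant with $(v^{(1)})^{T}Wv^{(2)}$, which is forced to equal $1$. The main obstacle is extending this to general $m$, where the immediate dimension count no longer bites. The plan is to exploit the Pfaffian--Pl\"ucker relations among the $c_{T}$, which come from a single alternating matrix $S(0)$ and so are far from independent, together with higher-order coefficient constraints (notably the vanishing of the $z_{1}z_{2}z_{3}$ coefficient and of all pure $z_{k}^{2}$ coefficients), to reduce any purported SDR of size $m>3$ to one of size $\le 3$ via a symmetric congruence $P^{T}A(z)P$ that peels off redundant directions without changing $\det$. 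At that point the $m=3$ argument closes the case, completing the contradiction.
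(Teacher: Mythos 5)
There are genuine gaps in both halves of your proposal, and it is worth noting at the outset that the paper itself does not prove Theorem \ref{ThmSymmDetReprExt}: it quotes it from the literature (part (a) from \cite{10GKKP,12RQ,13GMT,21SWb}, part (b) from \cite{13GMT}), while Section \ref{sec:Thms4&5} develops the characteristic-two machinery that parallels the proof of (b). For part (a), your induction reduces everything to closure of the SDR class under sums, which is exactly the one step where $\operatorname{char}(\mathbb{F})\neq 2$ must enter --- and that step is not actually carried out. ``Bordering $A\oplus B$ by affine linear rows/columns whose entries involve $\tfrac12$'' is a hope, not a construction: a naive bordering of a symmetric pencil changes the determinant by cofactor terms you have not shown can be made to produce exactly $p+q$ while keeping all entries affine linear and the matrix symmetric. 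The known arguments do real work here; for instance, the route suggested by this paper's own framework is to produce a symmetric Bessmertny\u{\i} realization of $[p]$ (Theorem \ref{thm:BessmertnyiRealizabilityThm}(c), available since $\operatorname{char}(\mathbb{F})\neq 2$) with the additional property that $\det A_{22}$ is a nonzero constant, and then invoke Schur's determinant formula (Lemma \ref{LemSchurDetFormula}) to get $\det A = p\cdot\det A_{22}$; controlling $\det A_{22}$ is precisely the issue you never address.

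For part (b), your decomposition $A=D+S$, the expansion $\det(D+S)=\sum_T\bigl(\prod_{i\in T}D_{ii}\bigr)\operatorname{Pf}(S|_{T^c})^2$, and the coefficient bookkeeping via $\Phi$, $U$, $W$ are fine, and the $m=3$ contradiction (the identity $\det[v^{(0)}\mid v^{(1)}\mid v^{(2)}]=(v^{(1)})^TWv^{(2)}$ together with $v^{(0)},v^{(1)},v^{(2)}\in U^{\perp}$ and $U\neq 0$) checks out. But the theorem requires ruling out SDRs of \emph{every} size $m$, and for $m>3$ you offer only a plan. The proposed reduction tool cannot work as stated: a symmetric congruence $P^TA(z)P$ with constant invertible $P$ leaves the size $m$ unchanged and merely multiplies the determinant by $\det(P)^2$, so it cannot ``peel off'' rows and columns; and the Pfaffian--Pl\"ucker relations among the $c_T$ are invoked without any argument. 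Reducing the size of a purported representation is exactly the hard content of the proof in \cite{13GMT}, which is done by passing to the quotient rings $\mathcal{R}(\ell^2)$, using the absolute value and the CLEAN/ADD/ISOLATE operations to lower the dimension while tracking how determinants change (the analogue in this paper is the chain Lemma \ref{lem:zero_out_inv}--Lemma \ref{lem:realize_reduce_dim} culminating in Theorem \ref{thm:ring_realizable_linear}, from which the nonlinearity of the multilinear polynomial $z_1z_2+z_3$ yields the contradiction). Until you either import that machinery or supply a genuine size-reduction argument, part (b) is established only for $m\le 3$ and the proof is incomplete.
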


Interestingly, the next example shows that the polynomial $p(z_1,z_2,z_3)=z_1z_2+z_3$ also has no SBR.

\begin{Ex}\label{thm:NoSymBessRealzForExOfNoSDRRepr}
       If $\operatorname{char}(\mathbb{F})=2$ then 
    \begin{gather*}
        F(z)=\begin{bmatrix}
        z_1z_2+z_3
    \end{bmatrix}\in\mathbb{F}(z)^{1\times 1}
    \end{gather*}
    $[$where $z=(z_1,z_2,z_3)]$ is a symmetric matrix which does not have a SBR.
\end{Ex}

To prove Theorem \ref{thm:BessmertnyiRealizabilityThm}, we will exploit techniques that were used in \cite{21SWa} to prove the Bessmertny\u{\i} Realization Theorem (see \cite[Theorem 2]{21SWa}). Then in order to justify Examples \ref{thm:NoSymBessRealzForSimpleProd}-\ref{thm:NoSymBessRealzForExOfNoSDRRepr}, we will use techniques from \cite{13GMT} 
that lead to a proof of Theorem \ref{ThmSymmDetReprExt}.(b) 
and exploit the close connection between SDRs and SBRs that was shown in \cite{21SWb} in their proof of Theorem \ref{ThmSymmDetReprExt}.(a). In particular, their techniques utilized the following well-known result (for instance, see \cite{05FZ}), which will be important in our paper as well.
\begin{lemma}\label{LemSchurDetFormula}
    If $A = [A_{ij}]_{i,j=1,2} \in \mathbb{F}^{m \times m}$ and $\det A_{22}\not=0$, then 
    \begin{gather*}
        \det A=\det (A_{22})\det (A/A_{22}).
    \end{gather*}
\end{lemma}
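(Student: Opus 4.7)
The plan is to prove the formula by producing the standard block $LDU$ factorization of $A$ and then taking determinants. Specifically, since $A_{22}$ is invertible, I will verify by direct block multiplication that
\begin{gather*}
A=\begin{bmatrix} A_{11} & A_{12}\\ A_{21} & A_{22}\end{bmatrix}
=\begin{bmatrix} I & A_{12}A_{22}^{-1}\\ 0 & I\end{bmatrix}
\begin{bmatrix} A/A_{22} & 0\\ 0 & A_{22}\end{bmatrix}
\begin{bmatrix} I & 0\\ A_{22}^{-1}A_{21} & I\end{bmatrix},
\end{gather*}
where $A/A_{22}=A_{11}-A_{12}A_{22}^{-1}A_{21}$ as in the definition. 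Multiplying the right-hand side left-to-right, the middle and left factors give $\begin{bmatrix} A/A_{22} & A_{12}\\ 0 & A_{22}\end{bmatrix}$, and multiplying this by the right unit-triangular factor restores the $(1,1)$-block to $A/A_{22}+A_{12}A_{22}^{-1}A_{21}=A_{11}$ and the $(2,1)$-block to $A_{22}A_{22}^{-1}A_{21}=A_{21}$, yielding $A$.

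Next, I will apply the multiplicativity of the determinant to this factorization. The two unit block-triangular factors have determinant one (either by expanding along the block of zeros and using that triangular matrices with unit diagonal have determinant one, or by applying a Leibniz-type argument). The block-diagonal middle factor has determinant $\det(A/A_{22})\det(A_{22})$ by the usual block-diagonal determinant identity (which again can be reduced to the triangular case by noting that $\operatorname{diag}(A/A_{22},A_{22})$ is a product of block-diagonal matrices each equal to the identity on one block). Combining these observations gives $\det A=\det(A_{22})\det(A/A_{22})$ as claimed.

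I expect no real obstacle; the only subtle point is that the proof must avoid any appeal to field characteristic (so that it works uniformly over any $\mathbb{F}$, including $\operatorname{char}(\mathbb{F})=2$). Since the factorization above is purely algebraic and uses only the invertibility of $A_{22}$, and since the determinant identities for block triangular and block diagonal matrices hold over any commutative ring, the argument goes through verbatim in all characteristics. Finally, although the statement is formulated for scalar matrices, the same factorization (and hence the same identity) applies entrywise over the field $\mathbb{F}(z)$, which is the form in which the lemma will later be applied to Bessmertny\u{\i} realizers.
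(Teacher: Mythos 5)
Your proof is correct and follows exactly the route the paper itself points to: Lemma \ref{LemSchurDetFormula} is stated as well-known (citing \cite{05FZ}), and later the paper remarks that the identity follows from the block factorization \eqref{AitkenBlockFactorizationFormula}, which is precisely the $LDU$ decomposition you verify and then pass to determinants. Your observation that the argument is characteristic-free and works over commutative rings is also exactly the point the paper makes in Sec.\ \ref{sec:RealizUnderQRings} just before Definition \ref{real}.
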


Finally, there are several other motivations for further developing the Bessmertny\u{\i} realization theory over any field $\mathbb{F}$ (see, for instance, \cite{21SWa} for a discussion of these motivations in the case $\mathbb{F}=\mathbb{C}$) which come from studying the algebraic structure of linear dynamical systems (e.g., discrete-time linear systems \cite{21HRS}). Our main motivation is to extend the realizability theory for rational matrix functions possessing symmetries that have been well-developed in the one variable case ($n=1$), especially in the context of state-space modeling, to the multivariable setting ($n\geq 2$).  

For instance, in the one variable case, it is known from \cite{63RK} and \cite[Chap.\ 10]{69KFA} that every rational matrix function $F(z_1)$ that is regular at $z_1=\infty$ (i.e., proper) has a state-space model whose transfer function realizes $F$ in the form $F(z_1)=D+C(z_1I-A)^{-1}B$, for some constant matrices $A,B,C,D$ (of appropriate sizes). This is just a special case of a Bessmertny\u{\i} realization (cf.\ \cite[Table 1]{21SWa}) if $F(z_1)$ is a square matrix (here the relationship between Schur complements and state spaces is well-known, for instance, see \cite{05BGKR}).  In this context, such state-space realizations that have additional algebraic structure for rational functions with certain classes of symmetries is also well-developed (see \cite{66YT, 68KLT, 72JW, 76JW, 80CBa, 82AR, 82CM, 83HM, 88AG, 90ABGR, 92ABGR, 94aABGR}, \cite[Sec.\ 4.4]{05AA}, and \cite{11RW}).

For non-proper rational matrix functions $F(z_1)$, it is known (see \cite{74HR, 79VDK, 81VLK, 81HW, 82PC}) that they have a generalized state-space model (i.e., the descriptor form) whose transfer function realizes $F$ in the form $F(z_1)=D+C(z_1E-A)^{-1}B$, for some constant matrices $A,B,C,D, E$ (of appropriate sizes). This again is a special case of a Bessmertny\u{\i} realization if $F(z_1)$ is a square matrix (cf.\ \cite[Table 1]{21SWa}). In this context, such generalized state-space realizations that have additional algebraic structure for rational functions with certain classes of symmetries has also been studied (see \cite{83HWa, 83HWb, 98KL}).

More generally, rational matrix functions $F(z_1)$ can be realized as the transfer function of a special class of linear dynamical systems called polynomial-matrix models (see \cite{70HR, 76PF, 80TK, 91AV}, and also the surveys \cite{89JS, 91PF}). In this case, $F(z_1)=D(z_1)+C(z_1)A(z_1)^{-1}B(z_1)$ is the Schur complement of a polynomial matrix $P(z_1)=\begin{bmatrix}
    D(z_1) & -C(z_1)\\
    B(z_1) & A(z_1)
\end{bmatrix}$. 
In this more general setting, realizations for rational functions with symmetries has been developed in \cite{83PF, 84PF}. Recently though, and of direct relevance to our paper with $n=1$, the special case of $P(z_1)$ being a linear matrix pencil, i.e., a Bessmertny\u{\i} realizations for such rational functions $F(z_1)$, has been of interest which includes realization with symmetries (see, e.g., \cite{22DQDa, 22DQDb, 24DNQD}). Primarily, the results have been restricted to fields $\mathbb{F}\subseteq \mathbb{C}$, but recently the realization problem has been consider for general fields (see \cite{25DNZ}) and hence one can consider realizations with symmetries in that setting as well.

In comparison for the multivariable case ($n\geq 2$), there are three main classes of realizations for rational matrix functions $F(z_1,\ldots, z_n)$: (I) Givone-Roesser; (II) Fornasini-Marchesini; (III) Bessmertny\u{\i}. The classes (I) and (II), introduced in the 1970's \cite{72GR, 73GR, 75RR, 76FM, 78FM}, are well-known generalizations of the state-space models above (see, for instance, \cite{82NB, 85TK, 00EZ, 01KG, 03NB, 03AD, 08AD, 11CSMX, 20YZWMX, 20ZGSX, 23ZHXGSY} and references therein). The class (III), introduced in 1982 \cite{82MB}, generalizes both classes (I) and (II) and is more suitable for certain models, e.g., arising in electric circuit theory  \cite{05MB, 11JB} or in the theory of composites \cite{16GM, 22GM}, in which homogeneous symmetric linear matrix pencils naturally arise (for more on this, see also \cite{21SWa} and references therein). However, the problem of realizations in the class (I)-(III) for multivariable rational functions with symmetries has yet to be thoroughly studied. Although there has been some investigations along these lines, for instance, see \cite{02MB, 05AG, 06HMV, 18HMS, 21SWa}, they have all been restricted to the field $\mathbb{F}=\mathbb{C}$ or, to a lesser extent, $\mathbb{F}=\mathbb{R}$. But considering applications such as digital systems, where discrete dynamical systems over the binary field $\mathbb{F}_2=\{0,1\}$ (or Boolean field) arise naturally, or more generally for dynamical systems over finite fields such as linear modular systems (see, for instance, \cite[Sec.\ 1.5]{05HP}, \cite[Sec.\ 9.5]{97LN}, \cite{05RT, 07EZ} and references therein), it is apparent that one should also consider realization problems over any field $\mathbb{F}$. In conclusion, it is clear our paper is well-motivated and the main theorem (Theorem \ref{thm:BessmertnyiRealizabilityThm}) represents a significant contribution to multivariate realizability theory as it treats an important case of symmetry in the realization problem for class (III) in which any field $\mathbb{F}$ is allowed.

The rest of the paper will proceed as follows. In Sec. \ref{sec:Thm3}, we begin by considering several operations involving Schur complements including sums, scalar and matrix multiplication, and inverses. Then, in Sec. \ref{sec:PrfMainThm}, we use those results to prove all of Theorem \ref{thm:BessmertnyiRealizabilityThm} except that part dealing with case (iii) in statements (c) and (d), as this will require more advance techniques developed in the later sections. In Sec. \ref{sec:Thms4&5}, we develop the theory needed for justifying Examples \ref{thm:NoSymBessRealzForSimpleProd}-\ref{thm:NoSymBessRealzForExOfNoSDRRepr} (see Sec.\ \ref{sec:justifyingExamples}). In Sec.\ \ref{sec:JordanAlgebraOfSBRs}, we show that there is a Jordan algebra that naturally arises when considering the class of all functions having an SBR, and we use this algebra to complete the proof of Theorem \ref{thm:BessmertnyiRealizabilityThm} in Sec.\ \ref{sec:SymmMatricesWithSBRs}.

\section{Schur complement algebra}\label{sec:Thm3}

In our discussion below, assume that $A=A(z), B=B(z)$ are linear matrix pencils with entries in $\mathbb{F}[z]$ and that there exists partitions of the matrices $A=[A_{ij}]_{i,j=1,2}, B=[B_{ij}]_{i,j=1.2}$ into $2\times 2$ block matrix form, such that $A_{22}, B_{22}$ are invertible. In addition, we assume their Schur complements $A/A_{22}, B/B_{22}$ are the same size (or, equivalently, $A_{11}, B_{11}, C_{11}$ are the same size); hence, $A/A_{22},B/B_{22}\in \mathbb{F}(z)^{k\times k}$ (for some $k\in\mathbb{N}$). Let $U\in \mathbb{F}^{l\times k}, V\in \mathbb{F}^{k\times l},$ and $X=X(z)\in \mathbb{F}(z)^{k\times k}$ be a linear matrix pencil.

Now consider the following operations:
\begin{itemize}
           \item[(I)] Scalar multiplication: $\lambda (A/A_{22})$, where $\lambda\in \mathbb{F}$;
           \item[(II)] Sum: $A/A_{22}+B/B_{22}$;
            \item[(III)] Symmetrization: $A/A_{22}+(A/A_{22})^T$;
           \item[(IV)] Matrix multiplication: $U(A/A_{22})V,$ $(A/A_{22})X^{-1}(B/B_{22})$;
           \item[(V)] Inversion: $(A/A_{22})^{-1}$;
            \item[(VI)] Kronecker product: $A/A_{22}\otimes I$;
           \item[(VII)] Homogenization: $z_{n+1}A(z/z_{n+1})/A_{22}(z/z_{n+1})$.
\end{itemize}
In this section, we prove that for each operation (I)-(VII) above, its output has a structured Bessmertny\u{\i} realization. We then show that Theorem \ref{thm:BessmertnyiRealizabilityThm} is a corollary of this. 
To simplify the presentation, we use the following abbreviations:
\begin{itemize}
    \item[LP:] Linear matrix pencil;
    \item[sLP:] Linear symmetric matrix pencil;
    \item [hLP:] Linear homogeneous matrix pencil;
    \item [hsLP:] Linear homogeneous and symmetric matrix pencil.
\end{itemize}

Finally, some of the proofs below are omitted as they can be easily proven and are left to the reader to verify.
\subsection{Scalar multiplication and sums}
\begin{lemma}\label{LemScalarMultiSchurCompl}
If $A=[A_{ij}]_{i,j=1,2}\in
\mathbb{F}(z)
^{m\times m}$ is a $2\times2$ block matrix such that $A_{22}$ is invertible then, for any $\lambda\in \mathbb{F}\setminus\{0\}$,
\begin{align*}
B/B_{22} = \lambda (A/A_{22}),
\end{align*}
where $B\in\mathbb{F}(z)^{m\times m}$ is the $2\times 2$ block matrix
\begin{gather*}
B=\left[\begin{array}{c;{2pt/2pt} c c}
B_{11} & B_{12} \\ \hdashline[2pt/2pt]
B_{21} & B_{22}
\end{array}\right]=\left[\begin{array}{c;{2pt/2pt} c c}
\lambda A_{11} & \lambda A_{12} \\ \hdashline[2pt/2pt]
\lambda A_{21} & \lambda A_{22}
\end{array}\right]=\lambda A.
\end{gather*}
Moreover, if $A$ is a LP, sLP, hLP, or hsLP then $B$ is a LP, sLP, hLP, or hsLP, respectively.
\end{lemma}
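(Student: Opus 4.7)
The plan is to verify the Schur complement identity by direct substitution and then check the four pencil-type preservation statements by inspecting the coefficient matrices of $\lambda A$. Since this is a routine scaling argument, the only substantive ingredient is that $\lambda \neq 0$, which ensures $\lambda A_{22}$ remains invertible (with inverse $\lambda^{-1} A_{22}^{-1}$) so that the Schur complement $B/B_{22}$ is defined.

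First, I would write down the Schur complement of $B$ using the definition and substitute $B_{ij} = \lambda A_{ij}$, obtaining
\begin{align*}
B/B_{22} &= \lambda A_{11} - (\lambda A_{12})(\lambda A_{22})^{-1}(\lambda A_{21}) \\
&= \lambda A_{11} - \lambda A_{12} A_{22}^{-1} A_{21} \\
&= \lambda (A/A_{22}),
\end{align*}
where the middle equality uses $\lambda \neq 0$ so $(\lambda A_{22})^{-1} = \lambda^{-1} A_{22}^{-1}$, and the three factors of $\lambda$ collapse to a single factor.

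Second, I would verify the pencil-preservation claims. Write $A(z) = A_0 + z_1 A_1 + \cdots + z_n A_n$ with $A_0, A_1, \ldots, A_n \in \mathbb{F}^{m\times m}$. Then $B(z) = \lambda A(z) = \lambda A_0 + z_1(\lambda A_1) + \cdots + z_n(\lambda A_n)$, whose coefficient matrices $\lambda A_j \in \mathbb{F}^{m \times m}$ exhibit $B$ as a LP. If each $A_j$ is symmetric, then $(\lambda A_j)^T = \lambda A_j^T = \lambda A_j$, so $B$ is an sLP. If $A_0 = 0$, then $\lambda A_0 = 0$, so $B$ is an hLP. Combining the last two observations handles the hsLP case.

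There is no real obstacle here: the nonzero hypothesis on $\lambda$ is the only subtlety, and it is needed exclusively to assert invertibility of $B_{22}$. The remaining assertions are immediate from the linearity of the map $M \mapsto \lambda M$ on $\mathbb{F}^{m\times m}$.
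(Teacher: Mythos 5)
Your proof is correct; the paper omits the proof of this lemma as routine, and your argument is exactly the standard direct computation one would write: substitute $B_{ij}=\lambda A_{ij}$ into the Schur-complement formula, use $\lambda\neq0$ so $(\lambda A_{22})^{-1}=\lambda^{-1}A_{22}^{-1}$, collapse the factors of $\lambda$, and then verify the LP/sLP/hLP/hsLP preservation by inspecting the scaled coefficient matrices $\lambda A_j$. Nothing is missing.
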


\begin{lemma}\label{LemSumSchurComp} If $A=[A_{ij}]_{i,j=1,2}\in
\mathbb{F}(z)
^{m\times m}$ and $B=[B_{ij}]_{i,j=1,2}\in
\mathbb{F}(z)
^{l\times l}$ are $2\times2$ block matrices such that $A_{22}$ and $B_{22}$ are invertible and $A/A_{22}$, $B/B_{22}\in
\mathbb{F}(z)
^{k\times k}$ then
\begin{align*}
C/C_{22}=A/A_{22}+B/B_{22},
\end{align*}
where $C\in
\mathbb{F}(z)
^{\left(  m+l-k\right)  \times\left(  m+l-k\right)  }$ is the block matrix
\begin{equation*}\label{SumOfSchurCompsCMatrix}
C=
\left[\begin{array}{c;{2pt/2pt} c c}
C_{11} & C_{12} \\ \hdashline[2pt/2pt]
C_{21} & C_{22}
\end{array}\right]
=
\left[\begin{array}{c;{2pt/2pt} c c}
A_{11}+B_{11} & A_{12} & B_{12} \\ \hdashline[2pt/2pt]
A_{21} & A_{22} & 0\\
B_{21} & 0 & B_{22}
\end{array}\right].
\end{equation*}
Moreover, if $A$ and $B$ are both LP, sLP, hLP, or hsLP then $C$ is LP, sLP, hLP, or hsLP, respectively.
\end{lemma}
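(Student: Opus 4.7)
The plan is to verify the formula by direct computation of the Schur complement of $C$ with respect to $C_{22}$, and then to check each of the four structural preservation claims (LP, sLP, hLP, hsLP) one at a time.

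First I would unpack the block structure of $C$ as displayed: $C_{11}=A_{11}+B_{11}$, $C_{12}=\bigl[\,A_{12}\ \ B_{12}\,\bigr]$, $C_{21}=\bigl[\begin{smallmatrix}A_{21}\\ B_{21}\end{smallmatrix}\bigr]$, and $C_{22}=\bigl[\begin{smallmatrix}A_{22}&0\\0&B_{22}\end{smallmatrix}\bigr]$. Because $A_{22}$ and $B_{22}$ are invertible, $C_{22}$ is block diagonal with inverse $\bigl[\begin{smallmatrix}A_{22}^{-1}&0\\0&B_{22}^{-1}\end{smallmatrix}\bigr]$, so the Schur complement $C/C_{22}=C_{11}-C_{12}C_{22}^{-1}C_{21}$ expands block-by-block into $A_{11}+B_{11}-A_{12}A_{22}^{-1}A_{21}-B_{12}B_{22}^{-1}B_{21}$, which is exactly $A/A_{22}+B/B_{22}$. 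This establishes the main identity.

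Next I would handle the preservation of structure. For the LP case, each entry of $C$ is either an entry of $A$, an entry of $B$, a sum of such entries, or the constant $0$, so $C$ inherits affine linearity in $z$ directly from $A$ and $B$; for the hLP case the same argument applies after observing that the constant terms of $A$ and $B$ are zero, so the constant term of $C$ is zero as well. For the sLP case I would check symmetry block-by-block: $C_{11}^T=A_{11}^T+B_{11}^T=C_{11}$, $C_{22}^T$ is the transpose of a block diagonal symmetric matrix so equals $C_{22}$, and $C_{12}^T=\bigl[\begin{smallmatrix}A_{12}^T\\ B_{12}^T\end{smallmatrix}\bigr]=\bigl[\begin{smallmatrix}A_{21}\\ B_{21}\end{smallmatrix}\bigr]=C_{21}$, with the zero off-diagonal blocks of $C_{22}$ posing no issue. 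The hsLP case is the intersection of the sLP and hLP cases and follows immediately from the previous two.

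I do not anticipate any genuine obstacle: the entire statement reduces to a direct block computation combined with elementary bookkeeping on the shapes of the four named pencil classes. The only mild subtlety is making sure that the zero off-diagonal blocks placed in $C_{22}$ (and correspondingly the zero pattern of $C$) do not disturb symmetry or homogeneity — but since the zero polynomial is both symmetric and homogeneous, nothing goes wrong. Consequently I would present the proof as a short displayed computation for the Schur complement identity followed by a one-sentence remark for each of the four structural preservation claims.
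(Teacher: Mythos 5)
Your computation is correct and is the obvious direct approach; the paper itself omits the proof of this lemma (stating it is "easily proven and left to the reader to verify"), so your argument is precisely what the authors intended. The only remark worth adding is that your sLP verification implicitly uses the fact that $C$ as a full matrix is symmetric (not merely that each block satisfies a transpose relation), which is indeed the condition required for a symmetric pencil, and that the coefficient matrices $C_j$ of $C(z)=C_0+\sum z_j C_j$ are symmetric precisely because each is assembled in the same block pattern from the symmetric coefficient matrices of $A$ and $B$ and the zero blocks; your block-by-block argument does correctly establish this, so nothing is missing.
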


\begin{lemma}\label{LemSymmetrizationSchurComplement}
     If $A=[A_{ij}]_{i,j=1,2}\in
\mathbb{F}(z)
^{m\times m}$ is a $2\times2$ block matrix such that $A_{22}$ is invertible with $A/A_{22}\in
\mathbb{F}(z)
^{k\times k}$ then $A^T=[(A^T)_{ij}]_{i,j=1,2}=[(A_{ji})^T]_{i,j=1,2},$
\begin{align*}
(A/A_{22})^T=(A^T)/(A^T)_{22},\;B/B_{22}=(A/A_{22})+(A/A_{22})^T,
\end{align*}
where $B\in
\mathbb{F}(z)
^{\left(  2m-k\right)  \times\left(2m-k\right)  }$ is the block
matrix
\begin{equation*}\label{SymmetrizationOfASchurCompTheBMatrix}
B=
\left[\begin{array}{c;{2pt/2pt} c c}
B_{11} & B_{12} \\ \hdashline[2pt/2pt]
B_{21} & B_{22}
\end{array}\right]
=
\left[\begin{array}{c;{2pt/2pt} c c }
A_{11}+(A_{11})^T & (A_{21})^T & A_{12} \\\hdashline[2pt/2pt] 
A_{21} & 0 & A_{22}\\
 A_{12}^T & (A_{22})^T & 0
\end{array}\right].
\end{equation*}
Moreover, if $A$ is LP or hLP then $B$ is sLP or hsLP, respectively.
\end{lemma}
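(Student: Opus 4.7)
The plan is to verify the three algebraic identities in sequence and then inspect the block structure of $B$. First, the identity $A^T=[(A_{ji})^T]_{i,j=1,2}$ is just the standard block-transpose formula, which I would state without computation. Second, to establish $(A/A_{22})^T=(A^T)/(A^T)_{22}$, I would transpose the Schur-complement formula $A/A_{22}=A_{11}-A_{12}A_{22}^{-1}A_{21}$ directly, using $(A_{22}^{-1})^T=(A_{22}^T)^{-1}$ together with the block-transpose identity just noted to recognize the result as the Schur complement of $A^T$ with respect to $(A^T)_{22}$.

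The substantive step is the third identity. The key observation is that the lower-right block of $B$ is block anti-diagonal,
\[
B_{22}=\begin{bmatrix}0 & A_{22}\\ A_{22}^T & 0\end{bmatrix},
\]
which is invertible (since $A_{22}$ is, and hence so is $A_{22}^T$), with inverse
\[
B_{22}^{-1}=\begin{bmatrix}0 & (A_{22}^T)^{-1}\\ A_{22}^{-1} & 0\end{bmatrix}.
\]
Reading off the partitioned blocks of $B$ and substituting into the Schur-complement formula $B/B_{22}=B_{11}-B_{12}B_{22}^{-1}B_{21}$, the anti-diagonal form of $B_{22}^{-1}$ pairs $A_{12}$ with $A_{22}^{-1}A_{21}$ and pairs $A_{21}^T$ with $(A_{22}^T)^{-1}A_{12}^T$, producing
\[
B/B_{22}=(A_{11}-A_{12}A_{22}^{-1}A_{21})+(A_{11}^T-A_{21}^T(A_{22}^T)^{-1}A_{12}^T),
\]
which equals $(A/A_{22})+(A/A_{22})^T$ by the second identity.

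For the structural claim, I would verify symmetry of $B$ block-by-block: $B_{11}$ is a sum of a matrix and its transpose, $B_{22}$ is manifestly self-transpose, and $B_{21}^T=B_{12}$ by construction. Moreover every entry of $B$ is either an entry of $A$, an entry of $A^T$, or a sum of two such, hence still affine linear in $z$; thus $B$ is a sLP whenever $A$ is a LP. The homogeneous case is identical because the class of hLPs is closed under transposition and summation, so $B$ is a hsLP whenever $A$ is a hLP. The only point that requires genuine care in the entire argument is recognizing the block anti-diagonal form of $B_{22}$ and writing down its inverse correctly; after that, everything reduces to routine matrix algebra and no deeper obstacle arises.
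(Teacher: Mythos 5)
Your proof is correct, and it follows the natural direct computation that the paper clearly has in mind: the paper actually omits the proof of this lemma, explicitly leaving it to the reader as routine. Your verification of $B_{22}^{-1}$ as the block anti-diagonal matrix, the resulting Schur-complement expansion, and the block-by-block symmetry and linearity check are all exactly what is needed; nothing is missing.
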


\subsection{Matrix products and inverses}

\begin{lemma}\label{lem:MatrixMultOfSchurCompl}
If $A=[A_{ij}]_{i,j=1,2}\in
\mathbb{F}(z)
^{m\times m}$ is a $2\times2$ block matrix such that $A_{22}$ is invertible with $A/A_{22}\in\mathbb{F}(z)^{k\times k}$ then, for any matrices $U\in \mathbb{F}^{l\times k}$ and $V\in \mathbb{F}^{k\times l}$,
\begin{align*}\label{SchurComplementSandwich}
B/B_{22}=U(A/A_{22})V,
\end{align*}
where $B\in \mathbb{F}^{(l+m-k)\times (l+m-k)}$ is the $2\times 2$ block matrix
\begin{align*}
B=\left[\begin{array}{c;{2pt/2pt} c c}
B_{11} & B_{12} \\ \hdashline[2pt/2pt]
B_{21} & B_{22}
\end{array}\right]=\left[\begin{array}{c;{2pt/2pt} c c}
UA_{11}V & UA_{12} \\ \hdashline[2pt/2pt]
A_{21}V & A_{22}
\end{array}\right].
\end{align*}
Moreover, if $A$ is LP or hLP, then $B$ is LP or hLP, respectively. Furthermore, if $V=U^T$ and $A$ is sLP or hsLP, then $B$ is sLP or hsLP, respectively.
\end{lemma}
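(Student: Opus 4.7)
The plan is to verify the identity $B/B_{22} = U(A/A_{22})V$ by a direct computation from the definition of the Schur complement, and then to check separately that the indicated algebraic structure on the pencils is preserved under the block construction. The dimension bookkeeping is straightforward: since $A/A_{22} \in \mathbb{F}(z)^{k \times k}$ we must have $A_{11} \in \mathbb{F}(z)^{k \times k}$, and the shapes of $U$ and $V$ are chosen so that all the products in $B$ are defined and $B_{11} = UA_{11}V \in \mathbb{F}(z)^{l \times l}$.

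First I would note that $B_{22} = A_{22}$ is invertible by hypothesis, so $B/B_{22}$ is defined, and compute
\begin{align*}
B/B_{22} &= B_{11} - B_{12} B_{22}^{-1} B_{21} \\
&= U A_{11} V - (U A_{12}) A_{22}^{-1} (A_{21} V) \\
&= U\bigl(A_{11} - A_{12} A_{22}^{-1} A_{21}\bigr) V = U(A/A_{22})V,
\end{align*}
using only associativity of matrix multiplication (the entries of $U$ and $V$ lie in $\mathbb{F}$, so they pass through all products without issue).

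Next I would verify the structural claims by inspection. For the LP (resp.\ hLP) claim, each block of $A$ is of the form $M_0 + z_1 M_1 + \cdots + z_n M_n$ with coefficients in $\mathbb{F}$ (with $M_0 = 0$ in the hLP case). Since $U$ and $V$ have entries in $\mathbb{F}$, the blocks $UA_{11}V$, $UA_{12}$, $A_{21}V$, $A_{22}$ of $B$ remain affine linear matrix pencils (resp.\ homogeneous) over $\mathbb{F}$, so $B$ is a LP (resp.\ hLP). For the sLP case with $V = U^T$, the symmetry $A^T = A$ gives $A_{11}^T = A_{11}$, $A_{22}^T = A_{22}$, and $A_{21}^T = A_{12}$; hence $B_{11} = UA_{11}U^T$ is symmetric, $B_{22} = A_{22}$ is symmetric, and $B_{21} = A_{21}U^T = (UA_{12})^T = B_{12}^T$, so $B^T = B$. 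Combined with the previous paragraph, this shows $B$ is sLP (resp.\ hsLP in the homogeneous case).

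There is no substantive obstacle in this lemma; it is a calculation paired with an inspection of coefficients. The only subtlety worth flagging is that the conclusion about symmetry requires the specific choice $V = U^T$ (arbitrary $V$ will generally destroy the symmetry of the off-diagonal blocks of $B$), which is why the lemma singles out that case.
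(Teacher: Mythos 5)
Your proof is correct, and it is exactly the direct computation the paper expects: this is one of the lemmas the authors explicitly omit as "easily proven and left to the reader to verify," and a straightforward Schur-complement expansion together with the coefficient/symmetry inspection is the intended argument. No issues.
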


The idea for the next lemma and its proof comes from systems theory for the state space representation and associated transfer matrix representation of the product of two transfer function matrices (see \cite[Sec. 6.1]{70HR}).
\begin{lemma}\label{LemMatrixMultipSchurComplements}
If $A=[A_{ij}]_{i,j=1,2}\in
\mathbb{F}(z)
^{m\times m}$ and $B=[B_{ij}]_{i,j=1,2}\in
\mathbb{F}(z)
^{l\times l}$ are $2\times2$ block matrices such that $A_{22}$ and $B_{22}$ are invertible with $A/A_{22}$, $B/B_{22}\in
\mathbb{F}(z)
^{k\times k}$ then, for any invertible $X\in \mathbb{F}(z)
^{k\times k}$,
\begin{align*}
C/C_{22}=(A/A_{22})X^{-1}(B/B_{22}),
\end{align*}
where $C\in
\mathbb{F}(z)
^{\left(  m+l\right)  \times\left(m+l\right)  }$ is the block
matrix
\begin{equation}\label{MultiSchurCompsCMatrix}
C=
\left[\begin{array}{c;{2pt/2pt} c c}
C_{11} & C_{12} \\ \hdashline[2pt/2pt]
C_{21} & C_{22}
\end{array}\right]
=
\left[\begin{array}{c;{2pt/2pt} c c c}
0 & A_{12} & 0 & A_{11} \\\hdashline[2pt/2pt] 
B_{21} & 0 & B_{22} & 0 \\
0 & A_{22} & 0 & A_{21}\\
B_{11} & 0 & B_{12} & -X
\end{array}\right].
\end{equation}
Moreover, if $A, B, X$ are all LP or hLP then $C$ is LP or hLP, respectively. Furthermore, if $B=A^T, X=X^T,$ and both $A, X$ are LP or hLP then $C$ is sLP or hsLP, respectively.
\end{lemma}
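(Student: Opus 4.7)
The plan is a direct verification: given the block matrix $C$ exhibited in the statement, I would first check that $C_{22}$ is invertible, then compute $C/C_{22}$ and show it equals $(A/A_{22})X^{-1}(B/B_{22})$. The partition implicit in the statement takes $C_{11}$ to be the top-left $k\times k$ zero block, so that $C_{22}$ has size $(m+l-k)\times(m+l-k)$ and $C/C_{22}\in\mathbb{F}(z)^{k\times k}$, matching the target size.

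For invertibility of $C_{22}$, I would swap the first two block rows so that its first block column becomes $(A_{22},0,0)^T$; expanding the determinant along this column and then along the first column of the remaining block-triangular factor gives $\det C_{22}=\pm\det(A_{22})\det(B_{22})\det(X)$, which is nonzero by hypothesis. To compute the Schur complement, I would avoid inverting $C_{22}$ explicitly and instead solve $C_{22}y=C_{21}$ by block back-substitution. Writing $y=(y_1,y_2,y_3)^T$ with block sizes $m-k,\,l-k,\,k$, the three block equations are $B_{22}y_2=B_{21}$, $A_{22}y_1+A_{21}y_3=0$, and $B_{12}y_2-Xy_3=B_{11}$. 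These solve in sequence to yield $y_2=B_{22}^{-1}B_{21}$, then $y_3=-X^{-1}(B/B_{22})$ after substituting into the third equation, and finally $y_1=A_{22}^{-1}A_{21}X^{-1}(B/B_{22})$. Since $C_{11}=0$ and $C_{12}=[A_{12},0,A_{11}]$, one obtains $C/C_{22}=-C_{12}y=-A_{12}y_1-A_{11}y_3=(A_{11}-A_{12}A_{22}^{-1}A_{21})X^{-1}(B/B_{22})=(A/A_{22})X^{-1}(B/B_{22})$, as required.

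The structural claims are then short bookkeeping. Every entry of $C$ is either $0$, an entry of $-X$, or an entry of $A$ or $B$, so the properties LP and hLP are inherited directly from $A$, $B$, $X$ by $C$. For the symmetric refinement, substituting $B_{ij}=A_{ji}^T$ (coming from $B=A^T$) together with $X=X^T$ and comparing $C$ with $C^T$ block by block shows $C=C^T$ as a matrix over $\mathbb{F}[z]$; since symmetry of a polynomial matrix pencil is equivalent to symmetry of each of its coefficient matrices, $C$ is an sLP, and an hsLP when $A$ and $X$ are additionally homogeneous.

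There is no single hard step — the content of the lemma lies in the particular arrangement of $C$, which is borrowed from the cascade-interconnection trick in state-space systems theory; the main thing to be careful about is the block-size bookkeeping in the back-substitution, since the row and column block sizes of $C_{22}$ are permuted relative to each other and the three equations must be matched to the correct components of $C_{21}$.
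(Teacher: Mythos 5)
Your proof is correct. It takes the same underlying route as the paper, namely direct verification of the Schur complement identity for the exhibited $C$, but organizes the computation differently and somewhat more cleanly. The paper writes down an explicit $3\times3$ block formula for $C_{22}^{-1}$, verifies it by block multiplication, and then computes $C_{11}-C_{12}C_{22}^{-1}C_{21}$ by another block multiplication, so invertibility is established implicitly by producing the inverse. You instead prove invertibility up front via the block-row swap and the resulting block-triangular factorization $\det C_{22}=\pm\det(A_{22})\det(B_{22})\det(X)$, and then bypass writing out $C_{22}^{-1}$ altogether by solving $C_{22}y=C_{21}$ through block back-substitution and forming $C/C_{22}=-C_{12}y$. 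The two calculations are mathematically the same, but your version avoids having to guess and verify the explicit inverse; the price, as you note, is that one must keep track of the mismatched block sizes (rows of $C_{22}$ are $l-k,\,m-k,\,k$ while columns are $m-k,\,l-k,\,k$) when pairing the three block equations with $y_1,y_2,y_3$, which you do correctly. Your handling of the LP, hLP, sLP, and hsLP assertions agrees with the bookkeeping the paper leaves to the reader.
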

\begin{proof}
First, $C_{22}$ is invertible with inverse
    \begin{gather*}
    C_{22}^{-1}=\begin{bmatrix}
    -A_{22}^{-1}A_{21}X^{-1}B_{12}B_{22}^{-1} & A_{22}^{-1} & A_{22}^{-1}A_{21}X^{-1}\\
    B_{22}^{-1} & 0 & 0\\
    X^{-1}B_{12}B_{22}^{-1} & 0 & -X^{-1}
\end{bmatrix}
\end{gather*}
since by block matrix multiplication
\begin{gather*}
    \begin{bmatrix}
 0 & B_{22} & 0\\
A_{22} & 0 & A_{21}\\
 0 & B_{12} & -X
\end{bmatrix}\begin{bmatrix}
    -A_{22}^{-1}A_{21}X^{-1}B_{12}B_{22}^{-1} & A_{22}^{-1} & A_{22}^{-1}A_{21}X^{-1}\\
    B_{22}^{-1} & 0 & 0\\
    X^{-1}B_{12}B_{22}^{-1} & 0 & -X^{-1}
\end{bmatrix}=I.
\end{gather*}
Second, we calculate the Schur complement
\begin{gather*}
    C/C_{22}=C_{11}-C_{12}C_{22}^{-1}C_{21}\\
    =-\begin{bmatrix}
        A_{12} & 0 & A_{11}
    \end{bmatrix}\begin{bmatrix}
    -A_{22}^{-1}A_{21}X^{-1}B_{12}B_{22}^{-1} & A_{22}^{-1} & A_{22}^{-1}A_{21}X^{-1}\\
    B_{22}^{-1} & 0 & 0\\
    X^{-1}B_{12}B_{22}^{-1} & 0 & -X^{-1}
\end{bmatrix}\begin{bmatrix}
        B_{21}\\
        0\\
        B_{11}
    \end{bmatrix}\\
    =-\begin{bmatrix}
        A_{12} & 0 & A_{11}
    \end{bmatrix}\begin{bmatrix}
        -A_{22}^{-1}A_{21}X^{-1}B_{12}B_{22}^{-1}B_{21}+A_{22}^{-1}A_{21}X^{-1}B_{11}\\
        B_{22}^{-1}B_{21}\\
        X^{-1}(B_{12}B_{22}^{-1}B_{21}-B_{11})
    \end{bmatrix}\\
    =-A_{12}(-A_{22}^{-1}A_{21}X^{-1}B_{12}B_{22}^{-1}B_{21}+A_{22}^{-1}A_{21}X^{-1}B_{11})+A_{11}X^{-1}(B/B_{22})\\
    =A_{12}A_{22}^{-1}A_{21}X^{-1}B_{12}B_{22}^{-1}B_{21}-A_{12}A_{22}^{-1}A_{21}X^{-1}B_{11}+A_{11}X^{-1}(B/B_{22})\\
    =(-A_{12}A_{22}^{-1}A_{21})X^{-1}(-B_{12}B_{22}^{-1}B_{21})-A_{12}A_{22}^{-1}A_{21}X^{-1}B_{11}+A_{11}X^{-1}(B/B_{22})\\
    =(-A_{12}A_{22}^{-1}A_{21})X^{-1}(B/B_{22})+A_{11}X^{-1}(B/B_{22})\\
    =(A/A_{22})X^{-1}(B/B_{22}).
\end{gather*}
The rest of the proof is now obvious from the representation (\ref{MultiSchurCompsCMatrix}).
\end{proof}

\begin{lemma}\label{LemInvOfASchurCompl}
If $A=[A_{ij}]_{i,j=1,2}\in
\mathbb{F}(z)
^{m\times m}$ is a $2\times2$ block matrix such that $A_{22}$ is invertible and $A/A_{22}\in\mathbb{F}(z)^{k\times k}$ is invertible, then $A$ is invertible and
\begin{align*}
B/B_{22}=(A/A_{22})^{-1}=(A^{-1})_{11},
\end{align*}
where $B\in \mathbb{F}(z)^{(m+k)\times (m+k)}$ is the block matrix
\begin{align*} 
B & =
\left[\begin{array}{c;{2pt/2pt} c c}
B_{11} & B_{12} \\ \hdashline[2pt/2pt]
B_{21} & B_{22}
\end{array}\right]
=
\left[\begin{array}{c; {2pt/2pt} c c}
0 & I & 0 \\\hdashline[2pt/2pt]
I & -A_{11} & -A_{12} \\ 
0 & -A_{21} & -A_{22}
\end{array}\right].
\end{align*}
Moreover, if $A$ is LP or sLP then $B$ is LP or sLP, respectively.
\end{lemma}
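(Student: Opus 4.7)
The plan is a three-step verification, followed by a direct inspection of the block structure to handle the ``moreover'' clause.

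First, I would establish invertibility of $A$ by applying Lemma \ref{LemSchurDetFormula}: since $\det A=\det(A_{22})\det(A/A_{22})$ and both factors are nonzero by hypothesis, $\det A\neq 0$, so $A^{-1}$ exists in $\mathbb{F}(z)^{m\times m}$. Next I would identify the block decomposition of $B$ explicitly: the matrix $B$ is $(k+m)\times(k+m)$ with $B_{11}=0_{k\times k}$, $B_{12}=\bigl[I_k\ 0\bigr]\in\mathbb{F}(z)^{k\times m}$, $B_{21}=B_{12}^T\in\mathbb{F}(z)^{m\times k}$, and most importantly
\[
B_{22}=\begin{bmatrix} -A_{11} & -A_{12} \\ -A_{21} & -A_{22} \end{bmatrix}=-A,
\]
which is invertible by the first step with $B_{22}^{-1}=-A^{-1}$.

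Second, I would compute the Schur complement directly:
\[
B/B_{22}=B_{11}-B_{12}B_{22}^{-1}B_{21}=0-\bigl[I_k\ 0\bigr](-A^{-1})\begin{bmatrix}I_k\\ 0\end{bmatrix}=\bigl[I_k\ 0\bigr]A^{-1}\begin{bmatrix}I_k\\ 0\end{bmatrix}=(A^{-1})_{11}.
\]
Third, to finish the chain of equalities $B/B_{22}=(A^{-1})_{11}=(A/A_{22})^{-1}$, I would invoke the standard block $LDU$ factorization
\[
A=\begin{bmatrix}I & A_{12}A_{22}^{-1}\\ 0 & I\end{bmatrix}\begin{bmatrix}A/A_{22} & 0\\ 0 & A_{22}\end{bmatrix}\begin{bmatrix}I & 0\\ A_{22}^{-1}A_{21} & I\end{bmatrix},
\]
invert each factor, and read off the $(1,1)$-block of $A^{-1}$, which is exactly $(A/A_{22})^{-1}$. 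This is the one nontrivial algebraic identity used, though it is entirely classical.

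Finally, for the ``moreover'' clause I would just inspect the explicit formula for $B$. If $A$ is a LP, then every block of $B$ — namely the zero block, the identity block, and the blocks $-A_{ij}$ — is affine linear in $z$, so $B$ is a LP. If moreover $A$ is a sLP, then $B_{11}^T=0=B_{11}$, $B_{22}^T=-A^T=-A=B_{22}$, and $B_{12}^T=B_{21}$ by construction, so $B$ is a sLP. I would also note why the statement does not extend to the hLP/hsLP cases: the constant block $I$ appearing in $B_{12}$ and $B_{21}$ forces $B$ to have a nonzero constant term even when $A$ does not. There is no real obstacle in the proof; the only thing that requires a moment of care is the classical block-inverse identity $(A^{-1})_{11}=(A/A_{22})^{-1}$, for which I would either cite a standard reference such as \cite{05FZ} or sketch the $LDU$ derivation above.
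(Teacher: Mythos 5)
Your proof is correct and follows essentially the same route as the paper's: both hinge on the block $LDU$ factorization of $A$, which the paper records as equation \eqref{AitkenBlockFactorizationFormula} and uses to deduce invertibility of $A$ and to read off $(A^{-1})_{11}=(A/A_{22})^{-1}$. The only difference is cosmetic: you derive invertibility of $A$ from the Schur determinant formula (Lemma \ref{LemSchurDetFormula}) rather than directly from the factorization, and you spell out explicitly that $B_{22}=-A$ and compute $B/B_{22}=(A^{-1})_{11}$ and the ``moreover'' clause, which the paper compresses into ``the proof follows now immediately from this.''
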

\begin{proof}
    First, as $A_{22}$ is invertible then
    \begin{gather}\label{AitkenBlockFactorizationFormula}
        A=\begin{bmatrix}
I & A_{12}A_{22}^{-1}\\
0 & I
\end{bmatrix}\begin{bmatrix}
A/A_{22} & 0\\
0 & A_{22}
\end{bmatrix}\begin{bmatrix}
I & 0\\
A_{22}^{-1}A_{21} & I
\end{bmatrix}.
    \end{gather}
Hence, it follows that if $A/A_{22}$ is invertible then $A$ is invertible and
\begin{align*}
    A^{-1}&=[(A^{-1})_{ij}]_{i,j=1,2}\\
&=\begin{bmatrix}
(A/A_{22})^{-1} & -(A/A_{22})^{-1}A_{12}A_{22}^{-1}\\
-A_{22}^{-1}A_{21}(A/A_{22})^{-1} & A_{22}^{-1}+A_{22}^{-1}A_{21}(A/A_{22})^{-1}A_{12}A_{22}^{-1}
\end{bmatrix}.
\end{align*}
The proof follows now immediately from this.
\end{proof}

\begin{lemma}\label{LemKroneckerProdSchurComplWithIdentityMatrix}
If $A=[A_{ij}]_{i,j=1,2}\in
\mathbb{F}(z)
^{m\times m}$ is a $2\times2$ block matrix, then the Kronecker product of $A$ with the identity matrix $I\in
\mathbb{F}
^{l\times l}$ is the block matrix
\begin{equation}
 B =
\left[\begin{array}{c;{2pt/2pt} c c}
B_{11} & B_{12} \\ \hdashline[2pt/2pt]
B_{21} & B_{22}
\end{array}\right]
=\left[\begin{array}{c;{2pt/2pt} c c}
A_{11}\otimes I & A_{12}\otimes I \\ \hdashline[2pt/2pt]
A_{21}\otimes I & A_{22}\otimes I
\end{array}\right]
=A\otimes I\in\mathbb{F}(z)^{ml\times ml}.\label{LemKroneckerProdSchurComplWithAMatrixCBlockForm}
\end{equation}
In addition, if $A_{22}$ is invertible then $B_{22}=A_{22}\otimes I$ is invertible and 
\begin{equation*}
B/B_{22}=A/A_{22}\otimes I.
\end{equation*}
Moreover, if $A$ is LP, sLP, hLP, or hsLP then $B$ is LP, sLP, hLP, or hsLP, respectively.
\end{lemma}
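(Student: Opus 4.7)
The plan is to verify the block identity (\ref{LemKroneckerProdSchurComplWithAMatrixCBlockForm}) directly from the definition of the Kronecker product, and then reduce the Schur complement computation to the mixed-product identity $(X\otimes Y)(U\otimes V)=(XU)\otimes(YV)$.

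First I would note that $A\otimes I_l$ is, by definition, the matrix obtained from $A$ by replacing every scalar entry $a_{ij}$ with the $l\times l$ block $a_{ij}I_l$. Respecting the $2\times 2$ partition of $A$, this regrouping immediately yields $A\otimes I=[A_{ij}\otimes I]_{i,j=1,2}$, which is the block form of $B$ claimed in (\ref{LemKroneckerProdSchurComplWithAMatrixCBlockForm}). Next, assuming $A_{22}$ is invertible, I would apply the mixed-product property to get $B_{22}=A_{22}\otimes I$ invertible with $B_{22}^{-1}=A_{22}^{-1}\otimes I$. Then a single application of the mixed-product property to the Schur complement yields
\begin{align*}
B/B_{22}&=(A_{11}\otimes I)-(A_{12}\otimes I)(A_{22}^{-1}\otimes I)(A_{21}\otimes I)\\
&=(A_{11}-A_{12}A_{22}^{-1}A_{21})\otimes I=(A/A_{22})\otimes I,
\end{align*}
giving the desired Schur complement identity.

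For the structural claims, I would write $A=A_0+z_1A_1+\cdots+z_nA_n$ with $A_i\in\mathbb{F}^{m\times m}$ and observe that the Kronecker product distributes over this sum, so $B=A\otimes I=(A_0\otimes I)+z_1(A_1\otimes I)+\cdots+z_n(A_n\otimes I)$ with coefficients $A_i\otimes I\in\mathbb{F}^{ml\times ml}$; hence $B$ is an LP whenever $A$ is. Symmetry is inherited via $(A_i\otimes I)^T=A_i^T\otimes I^T=A_i\otimes I$, and homogeneity is inherited since $A_0=0$ forces $A_0\otimes I=0$. Combining these covers the sLP, hLP, and hsLP cases.

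There is no real obstacle here: the entire argument is a routine consequence of the mixed-product property of the Kronecker product. The only thing requiring minor care is the bookkeeping needed to see that the naturally formed $2\times 2$ block matrix $[A_{ij}\otimes I]_{i,j=1,2}$ coincides with $A\otimes I$ as an element of $\mathbb{F}(z)^{ml\times ml}$ under the chosen partition, which follows directly from the entrywise definition of the Kronecker product.
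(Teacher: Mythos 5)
Your proof is correct and follows essentially the same route as the paper: identifying the block form from the entrywise definition of the Kronecker product, using the mixed-product property to compute $B_{22}^{-1}=A_{22}^{-1}\otimes I$ and hence $B/B_{22}=(A/A_{22})\otimes I$, and deducing the structural claims from distributivity of $\otimes$ over the pencil expansion together with $(A_i\otimes I)^T=A_i^T\otimes I$. The only difference is that you spell out the LP/sLP/hLP/hsLP bookkeeping a bit more explicitly than the paper does.
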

\begin{proof}
The first part of the proof, that $B=A\otimes I\in\mathbb{F}(z)^{ml\times ml}$ has the block form (\ref{LemKroneckerProdSchurComplWithAMatrixCBlockForm}), follows immediately from the definition of the Kronecker product $A\otimes I=[a_{ij}I]_{i,j=1,\ldots,m}$ of the matrices $A$ and $I$ and the $2\times 2$ block form of $A=[A_{ij}]_{i,j=1,2}$. Suppose now that $A_{22}$ is invertible. Then it follows by elementary properties of Kronecker products that the Kronecker product $A_{22}\otimes I$ is invertible with
$\left(  A_{22}\otimes I\right)^{-1}=A_{22}^{-1}\otimes I$ and that
\begin{align*}
    A/A_{22}\otimes I&=\left(  A_{11}-A_{12}A_{22}^{-1}A_{21}\right)  \otimes
I\\&=A_{11}\otimes I-\left[  (A_{12}A_{22}^{-1}A_{21})\otimes
I\right] \\&=A_{11}\otimes I-\left(  A_{12}\otimes I\right)  \left(  A_{22}^{-1}\otimes I\right)  \left(  A_{21}\otimes I\right) \\&=A_{11}\otimes
I-\left(  A_{12}\otimes I\right)  \left(  A_{22}\otimes I\right)  ^{-1}\left(
A_{21}\otimes I\right) \\&=B/B_{22}.
\end{align*}
The remaining part of the proof follows immediately from this and the fact that $(A\otimes I)^T=A^T\otimes I$. This completes the proof.
\end{proof}

\subsection{Homogenization}

\begin{lemma}\label{lem:SchurComplHomogen}
    Let $n\in \mathbb{N}\cup\{0\}$, $z_1,\ldots, z_{n+1}$ be $n+1$ indeterminates, $w=(z_1,\ldots, z_{n+1})$, $z=(z_1,\ldots,z_n)$ if $n\geq 1$ and $z=1$ otherwise. If $A=[A_{ij}]_{i,j=1,2}\in
\mathbb{F}(z)
^{m\times m}$ is a $2\times2$ block matrix such that $A_{22}$ is invertible, then
\begin{gather*}
    B(w)/B_{22}(w)=z_{n+1}A(z/z_{n+1})/A_{22}(z/z_{n+1})\in \mathbb{F}(w)
^{k\times k},
\end{gather*}
where $B(w)\in\mathbb{F}(w)^{m\times m}$ is the $2\times 2$ block matrix
\begin{gather*}
B(w)=[B_{ij}(w)]_{i,j=1,2}=[z_{n+1}A_{ij}(z/z_{n+1})]_{i,j=1,2}=z_{n+1}A(z/z_{n+1}),
\end{gather*}
and $B_{22}(w)=z_{n+1}A_{22}(z/z_{n+1})$ is invertible. Moreover, if $A$ is LP or sLP then $B$ is hLP or hsLP, respectively.
\end{lemma}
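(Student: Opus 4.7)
The plan is to verify the stated identity by a direct calculation after first observing that the substitution $z_i \mapsto z_i/z_{n+1}$ defines a ring homomorphism $\varphi : \mathbb{F}(z) \to \mathbb{F}(w)$. Since $z_{n+1}$ is a unit in $\mathbb{F}(w)$, the map is well-defined on $\mathbb{F}[z]$, and it extends to $\mathbb{F}(z)$ because any nonzero polynomial in $z_1,\ldots,z_n$, after clearing denominators under the substitution, produces a nonzero element of $\mathbb{F}(w)$. Entry-wise application to matrices preserves every algebraic identity, in particular matrix inversion: if $M(z)$ is invertible over $\mathbb{F}(z)$, then $M(z/z_{n+1})$ is invertible over $\mathbb{F}(w)$ with inverse $M^{-1}(z/z_{n+1})$. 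Applying this to $A_{22}$ and multiplying by the unit $z_{n+1}$ shows $B_{22}(w) = z_{n+1}A_{22}(z/z_{n+1})$ is invertible, with inverse $z_{n+1}^{-1}A_{22}(z/z_{n+1})^{-1}$.

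With invertibility in hand, the Schur complement computes directly as
\begin{align*}
B(w)/B_{22}(w) &= B_{11}(w) - B_{12}(w)B_{22}(w)^{-1}B_{21}(w) \\
&= z_{n+1}A_{11}(z/z_{n+1}) - z_{n+1}A_{12}(z/z_{n+1})\, z_{n+1}^{-1}A_{22}(z/z_{n+1})^{-1}\, z_{n+1}A_{21}(z/z_{n+1}) \\
&= z_{n+1}\bigl[A_{11}(z/z_{n+1}) - A_{12}(z/z_{n+1})A_{22}(z/z_{n+1})^{-1}A_{21}(z/z_{n+1})\bigr] \\
&= z_{n+1}(A/A_{22})(z/z_{n+1}).
\end{align*}
The key arithmetic point is that two factors of $z_{n+1}$ multiply with one factor of $z_{n+1}^{-1}$ to leave a single $z_{n+1}$ out front, which is precisely the homogenization prefactor.

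For the pencil structure claim, I would write $A(z) = A_0 + z_1 A_1 + \cdots + z_n A_n$ and substitute to obtain
\begin{align*}
B(w) = z_{n+1}A(z/z_{n+1}) = z_{n+1}A_0 + z_1 A_1 + \cdots + z_n A_n,
\end{align*}
which is linear in $w = (z_1,\ldots,z_{n+1})$ with zero constant term, so $B$ is an hLP; if in addition each $A_j$ is symmetric, every coefficient above is symmetric and $B$ is an hsLP. No step here is genuinely difficult: the only minor care required is the edge case $n = 0$ (where $A$ is a constant matrix and the substitution is trivial) and the one-time verification that the substitution extends to a ring homomorphism of the rational function fields. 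Both are routine.
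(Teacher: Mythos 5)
Your proof is correct. The paper omits the proof of this lemma entirely (it is one of the results explicitly ``left to the reader to verify'' in Section~\ref{sec:Thm3}), and the direct substitution argument you give---noting that $z_i \mapsto z_i/z_{n+1}$ induces a field embedding $\mathbb{F}(z)\hookrightarrow\mathbb{F}(w)$, that the three prefactors $z_{n+1}\cdot z_{n+1}^{-1}\cdot z_{n+1}$ collapse to a single $z_{n+1}$ in the Schur complement, and that $z_{n+1}A(z/z_{n+1})=z_{n+1}A_0+z_1A_1+\cdots+z_nA_n$ is a homogeneous pencil whose coefficients inherit symmetry---is exactly the calculation the authors intend.
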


\subsection{On the proof of Theorem \ref{thm:BessmertnyiRealizabilityThm}}\label{sec:PrfMainThm}
In this subsection we prove Theorem \ref{thm:BessmertnyiRealizabilityThm} except that part dealing with case (iii) in statements (c) and (d), for, as we said in the introduction, this requires more advanced techniques developed in Secs.\ \ref{sec:Thms4&5}-\ref{sec:SymmMatricesWithSBRs}. First, we begin with a couple of lemmas.
\begin{lemma}\label{LemMatrixRationalsAreBessRealizable}
    All elements of $\mathbb{F}(z)^{k\times k}$ are Bessmertny\u{\i} realizable for any $k\in\mathbb{N}$.
\end{lemma}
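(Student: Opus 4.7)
The plan is to reduce to the scalar case and then exploit closure of Bessmertny\u{\i} realizability under the field operations. First I would handle scalar rational functions $f\in\mathbb{F}(z)$. The base cases are trivial: any constant $c\in\mathbb{F}$ is realized by the linear pencil $\begin{pmatrix} c & 0\\ 0 & 1\end{pmatrix}$ (so that $A_{22}=[1]$ is invertible and $A/A_{22}=c$), and each indeterminate $z_i$ is realized similarly by $\begin{pmatrix} z_i & 0\\ 0 & 1\end{pmatrix}$.

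Next I would show that the class of scalar rational functions admitting a BR is closed under addition, multiplication, and inversion of nonzero elements. Closure under sums is immediate from Lemma \ref{LemSumSchurComp}. Closure under products follows from Lemma \ref{LemMatrixMultipSchurComplements} applied with $X=[1]$, which yields $(A/A_{22})X^{-1}(B/B_{22})=fg$. Closure under inversion is Lemma \ref{LemInvOfASchurCompl}. Since every $f\in\mathbb{F}(z)$ can be built from constants and indeterminates by finitely many applications of $+$, $\cdot$, and $(\cdot)^{-1}$, induction on the construction complexity yields a BR for every scalar rational function.

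To extend this to matrices I would decompose an arbitrary $F=[f_{ij}]\in\mathbb{F}(z)^{k\times k}$ as
\begin{gather*}
F=\sum_{i,j=1}^{k} e_i\, f_{ij}(z)\, e_j^T,
\end{gather*}
where $e_i\in\mathbb{F}^{k\times 1}$ denotes the $i$th standard basis column. By the scalar case, each $f_{ij}$ admits a BR. Applying Lemma \ref{lem:MatrixMultOfSchurCompl} with $U=e_i$ and $V=e_j^T$ then provides a BR for the rank-one matrix $e_i f_{ij}(z) e_j^T\in\mathbb{F}(z)^{k\times k}$. Iterating Lemma \ref{LemSumSchurComp} over the $k^2$ summands produces a BR for $F$ itself.

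I do not anticipate a serious obstacle: once the Schur-complement algebra of Sec.\ \ref{sec:Thm3} is in hand, the proof is essentially a bookkeeping induction. The only mild point worth flagging is that the product construction via Lemma \ref{LemMatrixMultipSchurComplements} requires an invertible ``glue'' matrix $X$, but choosing $X=[1]$ always works and sidesteps any concerns about poles of the factors being multiplied.
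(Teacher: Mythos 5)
Your proof is correct, and it takes a genuinely different route than the paper's. The paper proceeds polynomial-first: it realizes $1$ and the $z_j$, builds up monomials via Lemma \ref{LemMatrixMultipSchurComplements}, gets $\mathbb{F}[z]^{1\times 1}$ via Lemmas \ref{LemScalarMultiSchurCompl} and \ref{LemSumSchurComp}, then passes to $\mathbb{F}[z]^{k\times k}$ by writing polynomial matrices as sums of $p(z)B$ with $B\in\mathbb{F}^{k\times k}$ and invoking Kronecker products $p(z)\otimes I_k$ (Lemma \ref{LemKroneckerProdSchurComplWithIdentityMatrix}), and finally clears a common scalar denominator by $F=\{[q]^{-1}\otimes I_k\}P$, using the Kronecker lemma again. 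You instead settle the full scalar case $\mathbb{F}(z)^{1\times 1}$ up front by showing the class of realizable scalars is closed under $+$ (Lemma \ref{LemSumSchurComp}), $\cdot$ (Lemma \ref{LemMatrixMultipSchurComplements} with $X=[1]$), and inversion (Lemma \ref{LemInvOfASchurCompl}), and then lift to $k\times k$ matrices entrywise through the rank-one decomposition $F=\sum_{i,j}e_i[f_{ij}]e_j^T$ together with Lemma \ref{lem:MatrixMultOfSchurCompl} and Lemma \ref{LemSumSchurComp}. Both arguments draw on the same Schur-complement algebra of Sec.\ \ref{sec:Thm3}; yours is a bit more economical because it never needs the Kronecker-product lemma, whereas the paper's factorization $F=\tfrac{1}{q}P$ and its use of $\otimes I_k$ set up machinery that is reused in the symmetric-realization arguments later (e.g.\ in the $n=1$ case of Theorem \ref{thm:BessmertnyiRealizabilityThm}.(c) and in Theorem \ref{thm:amplesubspace}).
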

\begin{proof}
    First, $1=\begin{bmatrix}
        1 & 0\\
        0 & 1
    \end{bmatrix}/[1]$ and the monomials $z_j=\begin{bmatrix}
        z_j & 0\\
        0 & 1
    \end{bmatrix}/[1], j=1,\ldots, n$ are all Bessmertny\u{\i} realizable. It then follows from Lemma \ref{LemMatrixMultipSchurComplements} that all monomials $z^{\alpha}=\prod_{j=1}^nz_j^{\alpha_j}, \alpha=(\alpha_1,\ldots, \alpha_n)\in (\mathbb{N}\cup\{0\})^n$ are Bessmertny\u{\i} realizable. Hence from Lemmas \ref{LemScalarMultiSchurCompl} and \ref{LemSumSchurComp}, all elements of $\mathbb{F}[z]^{1\times 1}$ are Bessmertny\u{\i} realizable. Next, for any $B\in \mathbb{F}^{k\times k}$ and any $p(z)\in \mathbb{F}[z]^{1\times 1}$, we have that $p(z)B=p(z)I_kB=\{p(z)\otimes I_k\}\begin{bmatrix}
        B & 0\\
        0 & 1
    \end{bmatrix}/[1]$, which is Bessmertny\u{\i} realizable by Lemmas \ref{LemMatrixMultipSchurComplements} and \ref{LemKroneckerProdSchurComplWithIdentityMatrix}. From this and Lemma \ref{LemSumSchurComp}, it follows that all elements of $\mathbb{F}[z]^{k\times k}$ are Bessmertny\u{\i} realizable. Finally,  $F(z)\in\mathbb{F}(z)^{k\times k}$ implies that there exists $q(z)\in \mathbb{F}[z]$ and $P(z)\in \mathbb{F}[z]^{k\times k}$ such that
\begin{align*}
    F(z)=\frac{1}{q(z)}P(z)=\frac{1}{q(z)}I_k P(z)=\left\{\begin{bmatrix}
        q(z)
    \end{bmatrix}^{-1}\otimes I_k\right\}P(z),
\end{align*}
    which is Bessmertny\u{\i} realizable by Lemmas \ref{LemMatrixMultipSchurComplements}--\ref{LemKroneckerProdSchurComplWithIdentityMatrix}. This proves the lemma.
\end{proof}

\begin{lemma}\label{lem:SBRViaDiagElements}
    Suppose $F\in\mathbb{F}(z)^{k\times k}$. Then $F$ has a symmetric Bessmertny\u{\i} realization if and only $F^T=F$ and the $i$th diagonal element $[F_{ii}]\in \mathbb{F}(z)^{1\times 1}$ of the matrix $F$ has a symmetric Bessmertny\u{\i} realization for each $i=1,\ldots, k$.
\end{lemma}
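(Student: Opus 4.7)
The plan is to prove both implications using only the Schur complement algebra already developed in Section \ref{sec:Thm3}; no extra machinery should be needed.

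For the forward direction, I would start with an SBR $F = A/A_{22}$ where $A$ is an sLP. Symmetry $F^T = F$ is immediate from Lemma \ref{LemSymmetrizationSchurComplement}, which gives $(A/A_{22})^T = (A^T)/(A^T)_{22} = A/A_{22}$. To extract an SBR of the scalar entry $[F_{ii}] = e_i^T F e_i$, I apply Lemma \ref{lem:MatrixMultOfSchurCompl} with $U = e_i^T \in \mathbb{F}^{1\times k}$ and $V = U^T = e_i$; the ``furthermore'' clause of that lemma, which requires exactly $V=U^T$, guarantees that the resulting realizer is again an sLP.

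For the backward direction, the key move is the decomposition
\begin{gather*}
F = D + L + L^T,
\end{gather*}
which is valid because $F^T = F$, with $D$ the diagonal of $F$ and $L$ its strictly lower triangular part. I would then build an SBR of each summand separately. For $D$, applying Lemma \ref{lem:MatrixMultOfSchurCompl} with $U = e_i \in \mathbb{F}^{k\times 1}$ and $V = U^T = e_i^T$ to the given SBR of $[F_{ii}]$ yields an SBR of $F_{ii} E_{ii}$, where $E_{ii} = e_i e_i^T$; summing these $k$ realizations via Lemma \ref{LemSumSchurComp} gives an SBR of $D$. For the off-diagonal part, Lemma \ref{LemMatrixRationalsAreBessRealizable} produces a (not necessarily symmetric) BR of $L \in \mathbb{F}(z)^{k\times k}$, and Lemma \ref{LemSymmetrizationSchurComplement} then turns it into an SBR of $L + L^T$. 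One final application of Lemma \ref{LemSumSchurComp} combines the SBRs of $D$ and of $L + L^T$ into an SBR of $F$.

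There is no serious obstacle once the decomposition $F = D + L + L^T$ is recognized, since each subsequent step is a direct invocation of a lemma already in the section. The only mild subtlety worth flagging is keeping straight the two different uses of Lemma \ref{lem:MatrixMultOfSchurCompl}: as a row/column projection to pull the scalar $F_{ii}$ out of $F$, and as a column/row embedding to place a scalar SBR into the $(i,i)$ slot of a $k \times k$ matrix. In both uses one has $V = U^T$, so the sLP property of the realizer is preserved in each case, which is what makes the argument go through uniformly for every $\mathbb{F}$.
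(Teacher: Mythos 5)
Your proposal is correct and follows essentially the same route as the paper's proof: project out $[F_{ii}]=e_i^TFe_i$ via Lemma \ref{lem:MatrixMultOfSchurCompl} for one direction, decompose $F$ into its diagonal plus a symmetrized strictly triangular part for the other, and assemble SBRs via Lemmas \ref{lem:MatrixMultOfSchurCompl}, \ref{LemSumSchurComp}, \ref{LemSymmetrizationSchurComplement}, and \ref{LemMatrixRationalsAreBessRealizable}. The only cosmetic difference is that the paper writes the off-diagonal part as $F_{upp}+F_{upp}^T$ with $F_{upp}$ upper triangular while you use a lower-triangular $L$, which changes nothing.
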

\begin{proof}
    First, let $e_i$ denote the $i$th standard basis vector of $\mathbb{F}^k=\mathbb{F}^{k\times 1}$ so that $[F_{ii}]=e_i^TFe_i$. It follows immediately from this and Lemma \ref{lem:MatrixMultOfSchurCompl}, that if $F$ has a symmetric Bessmertny\u{\i} realization then so does $[F_{ii}]$, for each $i=1,\ldots, k$. Hence by Lemma \ref{LemSymmetrizationSchurComplement}, we have $F^T=F$. We will now prove the converse, that is, suppose $F^T=F$ and the $i$th diagonal element $[F_{ii}]\in \mathbb{F}(z)^{1\times 1}$ of the matrix $F$ has a symmetric Bessmertny\u{\i} realization for each $i=1,\ldots, k$. Consider the matrix $\operatorname{diag}F\in\mathbb{F}(z)^{k\times k}$ of the diagonal elements of $F$, i.e., $\operatorname{diag}F=\sum_{i=1}^k e_i[F_{ii}]e_i^T$. It follows from Lemma \ref{lem:MatrixMultOfSchurCompl} that $e_i[F_{ii}]e_i^T$ has a symmetric Bessmertny\u{\i} realization for each $i=1,\ldots, k$. By Lemma \ref{LemSumSchurComp},  
    $\operatorname{diag}F$ has a symmetric Bessmertny\u{\i} realization. Next, since $F^T=F$, there exists an upper triangular matrix $F_{upp}\in \mathbb{F}(z)^{k\times k}$ with zeros on the main diagonal such that $F-\operatorname{diag}F=F_{upp}+F_{upp}^T$.  Lemmas \ref{LemMatrixRationalsAreBessRealizable} and \ref{LemSymmetrizationSchurComplement} then imply that $F_{upp}+F_{upp}^T$ has a symmetric Bessmertny\u{\i} realization. Finally, by Lemma \ref{LemSumSchurComp}, it follows that $F=F_{upp}+F_{upp}^T+\operatorname{diag}F$ has a  symmetric Bessmertny\u{\i} realization.
\end{proof}

The following result is well-known and is due to the graded ring structure of $\mathbb{F}[z]$ (graded by degree).
\begin{lemma}\label{LemHomogScalarRationalFuncAlgProperties}
If $a,b\in\mathbb{F}$, $f(z),h(z)\in \mathbb{F}(z)_d$, and $r(z)\in \mathbb{F}(z)_{\ell}$ then 
\begin{align*}
    f(z)r(z)\in \mathbb{F}(z)_{d+\ell},\;\; af(z)+bh(z)\in \mathbb{F}(z)_d,
\end{align*}
and, if $f(z)\not=0$,
\begin{align*}
    \frac{1}{f(z)}\in\mathbb{F}(z)_{-d}.
\end{align*}
\end{lemma}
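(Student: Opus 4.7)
The approach is to unpack the definition of $\mathbb{F}(z)_d$ in terms of the grading of the polynomial ring $\mathbb{F}[z]=\bigoplus_{e\geq 0}\mathbb{F}[z]_e$, where $\mathbb{F}[z]_e$ denotes homogeneous polynomials of degree $e$, with the key property $\mathbb{F}[z]_{e_1}\cdot \mathbb{F}[z]_{e_2}\subseteq \mathbb{F}[z]_{e_1+e_2}$ inherited from multiplication of monomials. The plan is to represent each element $f\in\mathbb{F}(z)_d$ as a quotient $f=p/q$ where $p\in\mathbb{F}[z]_{d_p}$ and $q\in\mathbb{F}[z]_{d_q}\setminus\{0\}$ with $d_p-d_q=d$, and then verify each of the three claimed properties by direct computation on such representatives.

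First I would handle the product: writing $f=p_1/q_1$ with $\deg p_1-\deg q_1 = d$ and $r=p_2/q_2$ with $\deg p_2-\deg q_2=\ell$, the product is $fr=(p_1p_2)/(q_1q_2)$. The numerator $p_1p_2$ is homogeneous of degree $\deg p_1+\deg p_2$ and the denominator $q_1q_2$ is homogeneous of degree $\deg q_1+\deg q_2$, so the total degree is $(\deg p_1-\deg q_1)+(\deg p_2-\deg q_2)=d+\ell$, giving $fr\in\mathbb{F}(z)_{d+\ell}$. The inverse is immediate from the same representation: if $f=p/q\in\mathbb{F}(z)_d$ with $f\ne 0$, then $p\ne 0$ and $1/f=q/p$ has degree $\deg q-\deg p=-d$, so $1/f\in\mathbb{F}(z)_{-d}$.

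The sum requires putting the two summands over a common denominator: if $f=p_1/q_1$ and $h=p_2/q_2$ both lie in $\mathbb{F}(z)_d$, then $af+bh=(ap_1q_2+bp_2q_1)/(q_1q_2)$. Using $\deg p_1=d+\deg q_1$ and $\deg p_2=d+\deg q_2$, both cross terms $ap_1q_2$ and $bp_2q_1$ lie in $\mathbb{F}[z]_{d+\deg q_1+\deg q_2}$, so their sum is again in that homogeneous component (or is zero); dividing by $q_1q_2\in\mathbb{F}[z]_{\deg q_1+\deg q_2}$ yields an element of $\mathbb{F}(z)_d$.

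The only real obstacle worth noting is the well-definedness of $\mathbb{F}(z)_d$ itself: one must check that the degree $d$ of a nonzero homogeneous rational function does not depend on the choice of representation $p/q$. This follows because if $p_1/q_1=p_2/q_2$ with all four polynomials homogeneous, then $p_1q_2=p_2q_1$ in $\mathbb{F}[z]$, and since $\mathbb{F}[z]$ is an integral domain graded by degree, both sides must be homogeneous of the same degree, forcing $\deg p_1+\deg q_2=\deg p_2+\deg q_1$, i.e., $\deg p_1-\deg q_1=\deg p_2-\deg q_2$. Once this is in place, the three properties above are straightforward verifications from the grading of $\mathbb{F}[z]$.
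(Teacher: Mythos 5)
Your proof is correct and uses exactly the approach the paper gestures at: the paper omits the proof, attributing the result to "the graded ring structure of $\mathbb{F}[z]$ (graded by degree)," and your argument spells out precisely that — representing elements of $\mathbb{F}(z)_d$ as quotients of homogeneous polynomials, invoking $\mathbb{F}[z]_{e_1}\cdot\mathbb{F}[z]_{e_2}\subseteq\mathbb{F}[z]_{e_1+e_2}$, and checking well-definedness via the integral domain property. No discrepancy with the paper's intent.
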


An extension of the previous lemma to matrices is the following.
\begin{lemma}\label{lem:homogRationalMatrixFuncsProperties}
If $a,b\in\mathbb{F}$, $F(z), H(z)\in\mathbb{F}(z)_d^{m\times k}$, and $R(z)\in \mathbb{F}(z)_{\ell}^{k\times s}$ then
\begin{align}
    F(z)R(z)\in\mathbb{F}(z)_{d+\ell}^{m\times s},\;\;aF(z)+bH(z)\in \mathbb{F}(z)_d^{m\times k}.\label{ProdSumsHomoRationalMatrixFuncDegree}
\end{align}
Moreover, if $m=k$, then
\begin{align}
    \det [F(z)]\in\mathbb{F}(z)_{md},\label{DetRationalMatrixFuncDegree}
\end{align}
and, if $F(z)$ is invertible, then
\begin{align}
    F(z)^{-1}\in \mathbb{F}(z)_{-d}^{m\times m}.\label{InvRationalMatrixFuncDegree}
\end{align}
\end{lemma}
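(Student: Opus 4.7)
The plan is to reduce every claim to the scalar case, Lemma \ref{LemHomogScalarRationalFuncAlgProperties}, by working entrywise and invoking standard multilinear-algebra identities that are valid over any field.

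First I would handle \eqref{ProdSumsHomoRationalMatrixFuncDegree}. The $(i,j)$ entry of $F(z)R(z)$ equals $\sum_{t=1}^{k} F_{it}(z)R_{tj}(z)$, in which every factor $F_{it}(z)\in \mathbb{F}(z)_d$ and $R_{tj}(z)\in \mathbb{F}(z)_{\ell}$, so by the product and sum parts of Lemma \ref{LemHomogScalarRationalFuncAlgProperties} each entry lies in $\mathbb{F}(z)_{d+\ell}$, giving $F(z)R(z)\in\mathbb{F}(z)_{d+\ell}^{m\times s}$. Similarly, the $(i,j)$ entry of $aF(z)+bH(z)$ is $aF_{ij}(z)+bH_{ij}(z)\in\mathbb{F}(z)_d$ by the scalar lemma, and so $aF(z)+bH(z)\in\mathbb{F}(z)_d^{m\times k}$.

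Next, for \eqref{DetRationalMatrixFuncDegree} I would apply the Leibniz expansion
\begin{equation*}
    \det F(z)=\sum_{\sigma\in S_m}\operatorname{sgn}(\sigma)\prod_{i=1}^{m}F_{i,\sigma(i)}(z).
\end{equation*}
Each factor in the inner product lies in $\mathbb{F}(z)_d$, so iterating the product statement of Lemma \ref{LemHomogScalarRationalFuncAlgProperties} $m-1$ times shows the product lies in $\mathbb{F}(z)_{md}$; the sum then stays in $\mathbb{F}(z)_{md}$, establishing \eqref{DetRationalMatrixFuncDegree}.

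Finally, for \eqref{InvRationalMatrixFuncDegree}, assuming $F(z)$ is invertible (equivalently, $\det F(z)\neq 0$), I would invoke the classical adjugate identity $F(z)^{-1}=(\det F(z))^{-1}\operatorname{adj}F(z)$, which holds universally over any commutative ring. Each entry of $\operatorname{adj}F(z)$ is, up to sign, the determinant of an $(m-1)\times(m-1)$ submatrix whose entries lie in $\mathbb{F}(z)_d$, hence lies in $\mathbb{F}(z)_{(m-1)d}$ by \eqref{DetRationalMatrixFuncDegree} applied to a smaller matrix. Combining this with $(\det F(z))^{-1}\in\mathbb{F}(z)_{-md}$ from the reciprocal part of Lemma \ref{LemHomogScalarRationalFuncAlgProperties} and the product statement \eqref{ProdSumsHomoRationalMatrixFuncDegree} already established, each entry of $F(z)^{-1}$ lies in $\mathbb{F}(z)_{(m-1)d-md}=\mathbb{F}(z)_{-d}$, proving \eqref{InvRationalMatrixFuncDegree}.

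There is no real obstacle here: the whole lemma is a bookkeeping exercise that assembles the scalar homogeneity properties through the Leibniz and adjugate formulas, both of which are formal polynomial identities and therefore unaffected by the choice of field $\mathbb{F}$. The only point to keep in mind is that the adjugate argument must be framed as a universal identity rather than through any analytic construction, but this is standard.
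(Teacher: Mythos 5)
Your proof is correct and matches the paper's argument essentially step for step: entrywise reduction to the scalar Lemma \ref{LemHomogScalarRationalFuncAlgProperties}, the Leibniz expansion for the determinant, and the adjugate identity combined with the degree count $(m-1)d - md = -d$ for the inverse. No gaps.
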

\begin{proof}
Let $a,b\in\mathbb{F}$, $F(z), H(z)\in\mathbb{F}(z)_d^{m\times k}$, and $R(z)\in \mathbb{F}(z)_{\ell}^{k\times s}$. Then (\ref{ProdSumsHomoRationalMatrixFuncDegree}) follows immediately from Lemma \ref{LemHomogScalarRationalFuncAlgProperties}. Suppose now that $m=k$. Then \eqref{DetRationalMatrixFuncDegree} follows from Lemma \ref{LemHomogScalarRationalFuncAlgProperties} and the determinant formula
\begin{align*}
    \det [F(z)]=\sum_{\sigma\in S_m}\epsilon(\sigma)f_{\sigma (1)1}(z)\ldots f_{\sigma (m)m}(z),
\end{align*}
where $S_m$ denotes the set of all permutations on $\{1,\ldots,m\}$, $\epsilon(\sigma)$ denotes the sign of the permutation $\sigma\in S_m$, and the sum is over all permutations $\sigma$ in $S_m$. Now suppose that $F(z)$ is invertible. Then
\begin{align*}
    F(z)^{-1}=\frac{1}{\det[F(z)]}\operatorname{adj}[F(z)],
\end{align*}
where $\operatorname{adj}[F(z)]=[(-1)^{i+j}\det [F_{ij}(z)]]^T$ is the adjugate of the matrix $F(z)$ in which $F_{ij}(z)$ is the $(m-1)\times (m-1)$ matrix formed from the matrix $F(z)$ by deleting both its $i$th row and $j$th column. In particular, $F(z)\in \mathbb{F}(z)_d^{m\times m}$ implies $F_{ij}(z)\in \mathbb{F}(z)_d^{(m-1)\times (m-1)}$ so that by Lemma \ref{LemHomogScalarRationalFuncAlgProperties} we have $(-1)^{i+j}\det [F_{ij}(z)]\in \mathbb{F}(z)_{(m-1)d}$ and $0\not=\det[F(z)]\in \mathbb{F}(z)_{md}$. It follows from Lemma \ref{LemHomogScalarRationalFuncAlgProperties}, that $\frac{1}{\det[F(z)]}\in \mathbb{F}(z)_{-md}$ and hence $\frac{(-1)^{i+j}\det [F_{ij}(z)]}{\det[F(z)]}\in \mathbb{F}(z)_{(m-1)d-md}=\mathbb{F}(z)_{-d}$. Consequently, $F(z)^{-1}=\frac{1}{\det[F(z)]}\operatorname{adj}[F(z)]\in \mathbb{F}(z)_{-d}^{m\times m}$ which proves (\ref{InvRationalMatrixFuncDegree}). This completes the proof.
\end{proof}

We are now ready to begin proving Theorem \ref{thm:BessmertnyiRealizabilityThm}.
\begin{proof}[Proof of Theorem \ref{thm:BessmertnyiRealizabilityThm} {\upshape{[}}except case {\upshape{(}}iii{\upshape{)}} in statements {\upshape{(}}c{\upshape{)}} and {\upshape{(}}d{\upshape{)}}{\upshape{]}}]

(a): We have already proven statement (a) with Lemma \ref{LemMatrixRationalsAreBessRealizable}.

(b): Suppose that $F(z)\in\mathbb{F}(z)^{k\times k}$ has a homogeneous Bessmertny\u{\i} realization 
\[
F(z)=A(z)/A_{22}(z),\;\;A(z)=z_1A_1+\cdots+z_nA_n,
\]
where $A_j\in \mathbb{F}^{m\times m}$ for $j=1,\ldots, n$. Then $A(z)=[A_{ij}(z)]_{i,j=1,2}\in \mathbb{F}(z)_1^{m\times m}$, and so it follows immediately by Lemma \ref{lem:homogRationalMatrixFuncsProperties} that $F(z)=A_{11}(z)-A_{12}(z)A_{22}(z)^{-1}A_{21}(z)\in \mathbb{F}(z)_1^{k\times k}$. Conversely, suppose $F(z)\in\mathbb{F}(z)_1^{k\times k}$. If $n=1$, then $F(z)=z_1A_1$ is a linear matrix pencil, where $A_1=F(z)|_{z_1=1}\in \mathbb{F}^{k\times k}$, and clearly has a homogeneous Bessmertny\u{\i} realization. Suppose now $n\geq 2$. Then, letting $z_{/}=(z_1,\ldots,z_{n-1})$ and $G(z_{/}):=F(z)|_{z_n=1}\in\mathbb{F}(z_{/})^{k\times k}$, we have
\[
F(z)=z_nG\left(\frac{z_1}{z_n},\ldots,\frac{z_{n-1}}{z_n}\right).
\]
By part (a) of this theorem, we know that $G(z_{/})$ has a Bessmertny\u{\i} realization
\[
G(z_{/})=A(z_{/})/A_{22}(z_{/}),\;\;A(z_{/})=A_n+z_1A_1+\cdots+z_{n-1}A_{n-1}.
\]
It follows from this and Lemma \ref{lem:SchurComplHomogen} that $F(z)$ has the homogeneous Bessmertny\u{\i} realization
\[
F(z)=A(z)/A_{22}(z),\;\;A(z)=z_nA_n+z_1A_1+\cdots+z_{n-1}A_{n-1},
\]
where $A(z)=[A_{ij}(z)]_{i,j=1,2}=[z_nA_{ij}(z_{/}/z_n)]_{i,j=1,2}$. This proves (b).

(c): First, if $F(z)$ has a symmetric Bessmertny\u{\i} realization, then $F(z)=A(z)/A_{22}(z)$ for some symmetric linear matrix pencil $A(z)=[A_{ij}(z)]_{i,j=1,2}$ so that $A(z)^T=A(z)$ implies $F(z)^T=[A(z)/A_{22}(z)]^T=A(z)^T/A_{22}(z)^T=A(z)/A_{22}(z)=F(z)$. We now prove the converse under the assumption that $\operatorname{char}\mathbb{F}\not=2$ [i.e., case (ii)]. Suppose $F(z)$ is a symmetric matrix so that $F(z)=\frac{1}{2}[F(z)+F(z)^T]$. Since $F(z)$ has a Bessmertny\u{\i} realization by (a), it follows that $\frac{1}{2}[F(z)+F(z)^T]$ has a symmetric Bessmertny\u{\i} realization by Lemmas \ref{LemScalarMultiSchurCompl} and \ref{LemSymmetrizationSchurComplement} which proves the converse of (c) if $\operatorname{char}\mathbb{F}\not=2$. 

Let us now prove the converse of (c) under the assumption that $n=1$ [i.e., case (i)]. First, for any $d\in\mathbb{N},$ we have $[z_1^{2d}]=[z_1^d][z_1^d]^T$ and $[z_1^{2d+1}]=([z_1^d]^{-1}[z_1]^{-1}([z_1^d]^{-1})^T)^{-1}$. It follows from Lemmas \ref{LemMatrixRationalsAreBessRealizable}, \ref{LemMatrixMultipSchurComplements}, and \ref{LemInvOfASchurCompl} that $[z_1^{2d}], [z_1^{2d+1}]$ both have symmetric Bessmertny\u{\i} realizations. This and Lemmas \ref{LemScalarMultiSchurCompl} and \ref{LemSumSchurComp} then imply that every element of $\mathbb{F}[z_1]^{1\times 1}$ has a symmetric Bessmertny\u{\i} realization. Next, let $r\in \mathbb{F}(z_1)^{1\times 1}$ with $r\not=0$. Then $r=[p/q]$ for some $p,q\in \mathbb{F}[z_1]$ with $p,q\not=0$. As $[pq]\in \mathbb{F}[z_1]^{1\times 1}$, there exists a Bessmertny\u{\i} realizer $A=[A_{ij}]_{i,j=1,2}\in \mathbb{F}(z_1)^{m\times m}$ which is a symmetric linear matrix pencil such that $[pq]=A/A_{22}$.  Thus Lemma \ref{LemInvOfASchurCompl} implies that $[pq]^{-1}=(A/A_{22})^{-1}=e_1^TA^{-1}e_1$, and hence
\begin{gather*}
   r=p^2[pq]^{-1}=e_1^T([p]\otimes I_m) A^{-1}([p]\otimes I_m)^Te_1
\end{gather*}
has a symmetric Bessmertny\u{\i} realization by Lemmas \ref{lem:MatrixMultOfSchurCompl}, \ref{LemMatrixMultipSchurComplements}, and \ref{LemMatrixRationalsAreBessRealizable}.  
This proves that every element of $\mathbb{F}(z_1)^{1\times 1}$ has a symmetric Bessmertny\u{\i} realization. Finally, from this and Lemma \ref{lem:SBRViaDiagElements}, it follows that every symmetric matrix in $\mathbb{F}(z_1)^{k\times k}$ has a symmetric Bessmertny\u{\i} realization. This proves (c).

(d): If $F(z)\in\mathbb{F}(z)^{k\times k}$ has a homogeneous symmetric Bessmertny\u{\i} realization, then it follows immediately from statements (b) and (c) of this theorem that $F(z)\in\mathbb{F}(z)_1^{k\times k}$ and $F(z)^T=F(z)$.  Conversely, if $\operatorname{char}(\mathbb{F})\not=2$ [i.e., case (ii)] or if $n=1,2$ [i.e., case (i)], then in the proof of (b) we could have assumed that $A_1,\ldots, A_n$ were symmetric matrices by Lemma \ref{LemMatrixRationalsAreBessRealizable} so that $F(z)$ has a homogeneous symmetric Bessmertny\u{\i} realization. This completes the proof of the theorem, except case (iii) in statements (c) and (d) which we do in Sec.\ \ref{sec:SymmMatricesWithSBRs}.
\end{proof}

\section{SBR obstructions for multilinear polynomials}\label{sec:Thms4&5}
In this section, $\mathbb{F}$ will denote a field with $\operatorname{char}(\mathbb{F})=2$. As the field of rational functions $\mathbb{F}(z)$ also has $\operatorname{char}[\mathbb{F}(z)]=2$, it will be useful to remember [by the \textit{Frobenius isomorphism}, i.e., $\phi(r)=r^2, r\in \mathbb{F}(z)$] that $(a+b)^2=a^2+b^2$ for every $a, b \in \mathbb{F}(z)$.

The main goal of this section is to justify Examples \ref{thm:NoSymBessRealzForSimpleProd}-\ref{thm:NoSymBessRealzForExOfNoSDRRepr}. To do so, we will utilize some results from \cite{13GMT} on rings of multilinear polynomials and associated quotient rings. This requires the introduction of some notation and preliminary results which is done in Subsec.\ \ref{sec:AlgebraicPrel}. After this, in Subsec.\ \ref{sec:RealizUnderQRings} we show how to characterize polynomials with symmetric Bessmertny\u{\i} realizations in these quotient rings (see Def.\ \ref{real}). Here our main result is Theorem \ref{thm:ring_realizable_linear} whose proof can intuitively be thought of as being based on a modified version of Gaussian elimination (for simplifying the computation of a determinant) that yields a realizer in a simpler form by using a series of transformations (namely, CLEAN, ADD, and ISOLATE from \cite[Sec.\ 4]{13GMT}). The key ingredients to the proof is the simplified calculation of the determinant of a symmetric matrix in fields of characteristic $2$, as described by Lemma \ref{lem:det_inv}), and the fact that Schur's determinant formula (Lemma \ref{LemSchurDetFormula}) extends to any ring with multiplicative identity. Finally, in Subsec.\ \ref{sec:justifyingExamples}, we justify the Examples \ref{thm:NoSymBessRealzForSimpleProd}-\ref{thm:NoSymBessRealzForExOfNoSDRRepr} by utilizing a result (i.e., Lemma \ref{lemma2}) whose proof depends on the results from these subsections on the quotient rings including Theorem \ref{thm:ring_realizable_linear}. Before we proceed, we want point out that the results of this section, especially Theorem \ref{thm:ring_realizable_linear}, are the key to finishing the proof of Theorem \ref{thm:BessmertnyiRealizabilityThm} later on in Sec.\ \ref{sec:SymmMatricesWithSBRs}.

\subsection{Algebraic preliminaries}\label{sec:AlgebraicPrel}
For any monomial $z^{\alpha}\in \mathbb{F}[z]$ and $n$-tuple $\alpha=(\alpha_1,\ldots, \alpha_n)\in (\mathbb{N}\cup \{0\})^n$, we use the notation
\begin{gather*}
    \deg_i z^{\alpha}=\alpha_i,\hspace{0.2in}\deg z^{\alpha}=\sum_{i=1}^n\deg_iz^{\alpha}.
\end{gather*}
More generally, for any nonzero $p \in \mathbb{F}[z]$ written uniquely as the linear combination of monomials
\begin{gather}
    p(z)=\sum_{\alpha\in (\mathbb{N}\cup \{0\})^n}c_{\alpha}z^{\alpha},\label{ReprPolyLinearCombMono}
\end{gather}
where $c_{\alpha}\in \mathbb{F}$ is zero for all but a finite number of $\alpha$, we define
\begin{gather*}
    \deg_i p=\max\{\deg_i z^{\alpha}:c_{\alpha}\not=0\},\hspace{0.2in}\deg p=\max\{\deg z^{\alpha}:c_{\alpha}\not=0\},
\end{gather*}
and we call $\deg_i p$ the degree of $p$ in the $i$th variable $z_i$ and $\deg p$ the total degree of $p$ (for the zero polynomial $0$, we define $\deg_i 0=\deg 0=-\infty$). 

We say that $p \in \mathbb{F}[z]$ is \emph{linear} if it can be written in the form
\[
p = c_0 + \sum_{i = 1}^n{c_iz_i}
\]
for some $c_0, \ldots, c_n \in \mathbb{F}$. We say that a monomial $z^\alpha$ is \emph{multilinear} if $\alpha = (\alpha_1, \ldots, \alpha_n) \in \{0, 1\}^n$
and a polynomial $p \in \mathbb{F}[z]$ is \emph{multilinear} if it is a linear combination of multilinear monomials.

Given a set of polynomials $p_1, \ldots, p_k \in \FF[z]$, we denote the ideal they generate by $\langle p_1, \ldots, p_k \rangle$, i.e.,
\begin{equation*}
\langle p_1, \ldots, p_k \rangle = \left\{\sum_{i = 1}^kp_iq_i : q_1, \ldots, q_k \in \FF[z]\right\}.
\end{equation*}
Given a $n$-tuple $\ell = (\ell_1, \ldots, \ell_n) \in \FF^n$, we introduce the ideal $\mathcal{I}(\ell)$ and quotient ring $\R(\ell)$ from \cite[Sec.\ 2]{13GMT}:
\begin{gather}\label{ideal_equation}
\mathcal{I}(\ell) = \langle z_1^2 + \ell_1, \ldots, z_n^2 + \ell_n\rangle = \left\{\sum_{i=1}^n{(z_i^2 + \ell_i)q_i} : q_1, \ldots, q_n \in \mathbb{F}[z]\right\},\\
\R(\ell)=\FF[z]/\I(\ell)=\{r+\I(\ell):r\in \mathbb{F}[z]\}
\end{gather}
with operations
\begin{equation*}
    (p+\I(\ell))+(q+\I(\ell))=(p+q)+\I(\ell)\hspace{0.2in}\text{and}\hspace{0.2in}(p+\I(\ell))(q+\I(\ell))=(pq)+\I(\ell).
\end{equation*}
Given an $n$-tuple $\ell = (\ell_1, \ldots, \ell_n) \in \FF^n$, we define the map
\begin{gather*}
\pi_\ell:\FF[z]\longrightarrow\R(\ell),\;
\pi_\ell(r) = r +  \I(\ell).\label{DefCanonicalProjEll}
\end{gather*}
Then $\pi_\ell$ is an algebra homomorphism with kernel $\I(\ell)$, called the \emph{canonical projection} of $\FF[z]$ into $\R(\ell)$. We extend this projection to matrices $A = [a_{ij}]$ with entries in $\mathbb{F}[z]$ and define $\pi_\ell(A)$ to be the matrix of the same size with 
\begin{equation}\label{eq:pi_matrix}
    \pi_\ell(A)=[\pi_\ell(a_{ij})].
\end{equation}
Since $\pi_\ell$ is an algebra homomorphism, it follows that $\pi_\ell$ is also an algebra homomorphism from the algebra of matrices with entries in $\FF[z]$ to the algebra of matrices with entries in $\R(\ell)$. 

We say that $r \in \mathcal{R}(\ell)$ is \emph{linear} if there exists a linear polynomial $p \in \mathbb{F}[z]$ with $\pi_\ell(p) = r$. Correspondingly, we say that $r \in \mathcal{R}(\ell)$ is \emph{multilinear} if there exists a multilinear polynomial $p \in \mathbb{F}[z]$ with $\pi_{\ell}(p) = r$. Given an $n$-tuple $\ell = (\ell_1, \ldots, \ell_n) \in \mathbb{F}^n$, we define $\ell^2 = (\ell_1^2, \ldots, \ell_n^2) \in \mathbb{F}^n$ for convenience. We also consider $\mathbb{F}$ as a subset of $\mathcal{R}(\ell)$ via the map $c\mapsto c+\I(\ell)$ and say that $\pi_\ell(c) = c$ for $c \in \mathbb{F}$ by abuse of notation.

The construction of $\mathcal{R}(\ell)$ has the special property that replacing all $z_i^2$ with $\ell_i$ in a polynomial keeps it in the same equivalence class. This leads us to the following definitions from \cite[Sec.\ 2]{13GMT}.

\begin{Def}\label{def:mult}
Let $\ell \in \mathbb{F}^n$. For any $p \in \mathbb{F}[z]$ written in the form \eqref{ReprPolyLinearCombMono}, we define the maps $\operatorname{MULT}_{\ell}:\mathbb{F}[z]\rightarrow \mathbb{F}[z]$ and $\rho_{\ell}:\mathcal{R}(\ell)\rightarrow\mathbb{F}[z]$ by
\begin{gather*}
\operatorname{MULT}_{\ell}(p) = \sum_{\alpha\in (\mathbb{N}\cup \{0\})^n}c_{\alpha}\prod_{i=1}^n{\ell_i^{\lfloor \alpha_i / 2 \rfloor}z_i^{\alpha_i \operatorname{mod} 2}},\label{multdefequation}\\
\rho_{\ell}[p+\I(\ell)]=\operatorname{MULT}_{\ell}(p).\label{multdefequation1}
\end{gather*}
\end{Def}

\begin{example}
To start, $\operatorname{MULT}_{\ell}(z_i^{2m})=\ell_i^m$ and $\operatorname{MULT}_{\ell}(z_i^{2m+1})=\ell_i^mz_i$, where $m\in\mathbb{N}\cup\{0\}$. More generally, let $\alpha = (\alpha_1, \ldots, \alpha_n) \in (\mathbb{N} \cup \{0\})^n$. Then, by the division algorithm, $\alpha_i = 2\beta_i + \gamma_i$ for some unique integers $\beta_i, \gamma_i$ with $0\leq \gamma_i<2$. In particular, $\beta_i=\lfloor \alpha_i / 2 \rfloor\in\mathbb{N} \cup \{0\}$, $\gamma_i=\alpha_i \operatorname{mod} 2\in\{0,1\}$, and so
$
\operatorname{MULT}_{\ell}(z^\alpha)=\prod_{i=1}^n{\ell_i^{\beta_i}z_i^{\gamma_i}}.
$
\end{example}
Basic properties of these functions are described in the next lemma (we omit the proof as it is straightforward).
\begin{lemma}\label{lem:MultFuncsBasicProperties}
    Let $\ell \in \mathbb{F}^n$. Then $\operatorname{MULT}_{\ell}:\mathbb{F}[z]\rightarrow \mathbb{F}[z]$ and $\rho_{\ell}:\mathcal{R}(\ell)\rightarrow\mathbb{F}[z]$ are linear operators. Moreover, $\operatorname{MULT}_{\ell}$ is the unique projection on $\mathbb{F}[z]$ whose range is the space of all multilinear polynomials in $\mathbb{F}[z]$ and whose nullspace is $\mathcal{I}(\ell)$. Furthermore,
    \begin{gather*}
       \pi_{\ell}\circ \operatorname{MULT}_{\ell}=\pi_{\ell},\hspace{0.2in} \rho_{\ell}\circ \pi_{\ell} =\operatorname{MULT}_{\ell},\hspace{0.2in} \pi_{\ell} \circ \rho_{\ell} =I_{\mathcal{R}(\ell)},
    \end{gather*}
    where $I_{\mathcal{R}(\ell)}$ denotes the identity operator on $\mathcal{R}(\ell)$. In addition, $\pi_{\ell} \circ \det = \det \circ  \pi_{\ell}$ on $\mathbb{F}[z]^{m \times m}$.
\end{lemma}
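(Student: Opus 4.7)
The plan is to reduce nearly everything to the single congruence $p \equiv \operatorname{MULT}_{\ell}(p) \pmod{\mathcal{I}(\ell)}$ for all $p\in\mathbb{F}[z]$, from which the remaining assertions cascade. First I would note that $\operatorname{MULT}_{\ell}$ is $\mathbb{F}$-linear by inspection of its defining formula, since it acts by a fixed rule on each monomial $z^{\alpha}$ and is extended through the scalar coefficients $c_{\alpha}$. To establish the key congruence, I would verify it monomial-by-monomial: given the decomposition $\alpha_i = 2\beta_i+\gamma_i$ with $\gamma_i\in\{0,1\}$, the generators $z_i^2+\ell_i$ of $\mathcal{I}(\ell)$ yield $z_i^2\equiv \ell_i \pmod{\mathcal{I}(\ell)}$ (using $\operatorname{char}\mathbb{F}=2$), hence $z^{\alpha}\equiv \prod_i \ell_i^{\beta_i}z_i^{\gamma_i}=\operatorname{MULT}_{\ell}(z^{\alpha}) \pmod{\mathcal{I}(\ell)}$; $\mathbb{F}$-linearity then extends this to all of $\mathbb{F}[z]$. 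In particular, $\pi_{\ell}\circ \operatorname{MULT}_{\ell} = \pi_{\ell}$ is immediate.

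Next I would pin down the kernel and range of $\operatorname{MULT}_{\ell}$. The range is visibly contained in the space $M$ of multilinear polynomials, and on each multilinear monomial (where $\beta_i = 0$ and $\gamma_i=\alpha_i$) the operator acts as the identity, so the range equals $M$ and $\operatorname{MULT}_{\ell}^2=\operatorname{MULT}_{\ell}$. For the kernel, the inclusion $\mathcal{I}(\ell)\subseteq \ker \operatorname{MULT}_{\ell}$ is a direct calculation on generators: from the formula, $\operatorname{MULT}_{\ell}(z_i^2z^{\alpha})=\ell_i\operatorname{MULT}_{\ell}(z^{\alpha})$, whence $\operatorname{MULT}_{\ell}((z_i^2+\ell_i)q)=\ell_i\operatorname{MULT}_{\ell}(q)+\ell_i\operatorname{MULT}_{\ell}(q)=2\ell_i\operatorname{MULT}_{\ell}(q)=0$ in characteristic two. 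The reverse inclusion is forced by the congruence: if $\operatorname{MULT}_{\ell}(p)=0$ then $p \equiv 0 \pmod{\mathcal{I}(\ell)}$, so $p\in\mathcal{I}(\ell)$. Uniqueness of the projection then follows from the direct-sum decomposition $\mathbb{F}[z]=M\oplus \mathcal{I}(\ell)$: any other projection with the same range and kernel must agree with $\operatorname{MULT}_{\ell}$ on both summands.

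For $\rho_{\ell}$, well-definedness and linearity are immediate from $\ker \operatorname{MULT}_{\ell}=\mathcal{I}(\ell)$ together with the linearity of $\operatorname{MULT}_{\ell}$, while $\rho_{\ell}\circ \pi_{\ell}=\operatorname{MULT}_{\ell}$ is just the defining relation for $\rho_{\ell}$. The remaining composition $\pi_{\ell}\circ \rho_{\ell}=I_{\mathcal{R}(\ell)}$ reduces once more to the congruence: $\pi_{\ell}(\rho_{\ell}(p+\mathcal{I}(\ell)))=\pi_{\ell}(\operatorname{MULT}_{\ell}(p))=\operatorname{MULT}_{\ell}(p)+\mathcal{I}(\ell)=p+\mathcal{I}(\ell)$.

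Finally, $\pi_{\ell}\circ \det = \det \circ \pi_{\ell}$ on $\mathbb{F}[z]^{m\times m}$ follows by applying the ring homomorphism $\pi_{\ell}$ to the Leibniz expansion $\det A = \sum_{\sigma\in S_m}\epsilon(\sigma)\prod_i A_{\sigma(i),i}$, combined with the entrywise convention defining $\pi_{\ell}$ on matrices; this is routine bookkeeping. The main (indeed only) substantive point in the entire proof is the inclusion $\mathcal{I}(\ell)\subseteq \ker \operatorname{MULT}_{\ell}$, which genuinely needs $\operatorname{char}\mathbb{F}=2$ to collapse the factor $2\ell_i$; everything else is elementary.
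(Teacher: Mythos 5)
Your proof is correct and complete. The paper states this lemma with the remark ``we omit the proof as it is straightforward,'' so there is no argument in the text to compare against; your write-up confirms that the straightforward proof does in fact go through, organized around the single congruence $p \equiv \operatorname{MULT}_{\ell}(p) \pmod{\mathcal{I}(\ell)}$, with idempotence giving $\ker\operatorname{MULT}_{\ell} \subseteq \mathcal{I}(\ell)$ via $\operatorname{range}(I-\operatorname{MULT}_{\ell}) = \ker\operatorname{MULT}_{\ell}$, the generator computation giving the reverse inclusion, and the well-definedness of $\rho_{\ell}$ then following from the identified kernel. Your observation that $\operatorname{char}\mathbb{F}=2$ is genuinely used---both for $z_i^2 \equiv \ell_i$ (rather than $-\ell_i$) and to kill the $2\ell_i$ term---is accurate and worth flagging, since the lemma would be false over other fields with nonzero $\ell$ and the dependence is easy to overlook given the section-wide standing hypothesis.
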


The following lemma is an elementary result which was stated without proof in \cite[Sec.\ 2]{13GMT}. We give the proof here for the benefit of the reader.
\begin{lemma}\label{lem:ring_const_square}
If $\ell \in \FF^n$ and $r \in \R(\ell^2)$, then there is a unique $c \in \FF$ such that $r^2 = c^2$.
\end{lemma}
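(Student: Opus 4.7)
The strategy is to exploit the Frobenius map $x \mapsto x^2$, which is a ring endomorphism of any commutative ring of characteristic $2$, together with the defining congruences $z_i^2 \equiv \ell_i^2 \pmod{\I(\ell^2)}$ coming from the generators of $\I(\ell^2)$. The plan is to show that, after squaring, every element of $\R(\ell^2)$ collapses to a constant, and that the square root of this constant in $\FF$ is forced to be unique.

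For existence I would pick any representative $p \in \FF[z]$ of $r$, write $p = \sum_{\alpha} c_\alpha z^\alpha$, and apply Frobenius in $\FF[z]$ to get $p^2 = \sum_{\alpha} c_\alpha^2 z^{2\alpha}$. Using $z_i^2 \equiv \ell_i^2$ in $\R(\ell^2)$, the monomial $z^{2\alpha} = \prod_i (z_i^2)^{\alpha_i}$ becomes $\prod_i \ell_i^{2\alpha_i} = \ell^{2\alpha}$, so
\[
r^2 = \pi_{\ell^2}(p^2) = \sum_{\alpha} c_\alpha^2 \ell^{2\alpha}.
\]
Applying Frobenius a second time, this time inside $\FF$, this expression collapses to $\bigl(\sum_{\alpha} c_\alpha \ell^\alpha\bigr)^2 = p(\ell)^2$. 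Setting $c := p(\ell) \in \FF$ then yields $r^2 = c^2$, giving existence.

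For uniqueness, suppose $c_1, c_2 \in \FF$ both satisfy $c_i^2 = r^2$ in $\R(\ell^2)$. I first observe that the canonical inclusion $\FF \hookrightarrow \R(\ell^2)$, $c \mapsto c + \I(\ell^2)$, is injective: Lemma \ref{lem:MultFuncsBasicProperties} gives $\rho_{\ell^2} \circ \pi_{\ell^2} = \operatorname{MULT}_{\ell^2}$, and $\operatorname{MULT}_{\ell^2}$ fixes constants, so $\pi_{\ell^2}(c) = 0$ forces $c = 0$. Hence $c_1^2 = c_2^2$ already holds in $\FF$, and Frobenius now gives $(c_1 + c_2)^2 = c_1^2 + c_2^2 = 0$; since $\FF$ is a field this forces $c_1 + c_2 = 0$, i.e., $c_1 = c_2$. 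The only minor subtlety is this injectivity of $\FF \hookrightarrow \R(\ell^2)$, which is immediate from the preceding lemma; otherwise the argument amounts to two clean applications of the Frobenius homomorphism, and no serious obstacle is expected.
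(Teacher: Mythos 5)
Your proof is correct and follows essentially the same route as the paper's: both use Frobenius in $\FF[z]$ to square the representative, reduce modulo the generators $z_i^2+\ell_i^2$ to collapse $p^2$ to a constant, apply Frobenius in $\FF$ to recognize it as $p(\ell)^2$, and derive uniqueness from $(c_1-c_2)^2=0$ via the injectivity of $\FF\hookrightarrow\R(\ell^2)$ (which the paper also invokes through Lemma \ref{lem:MultFuncsBasicProperties}). The only cosmetic difference is that you substitute the generator relations directly rather than phrasing the reduction through $\operatorname{MULT}_{\ell^2}$, which is a harmless repackaging of the same computation.
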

\begin{proof}
We begin by proving existence. Let $p \in \mathbb{F}[z]$ be such that $\pi_{\ell^2}(p) = r$. Then, since $\pi_{\ell^2}$ is an algebra homomorphism and $\pi_{\ell}\circ \operatorname{MULT}_{\ell}=\pi_{\ell}$ by Lemma \ref{lem:MultFuncsBasicProperties},   it suffices to show that $\operatorname{MULT}_{\ell^2}(p^2)$ is a square in $\mathbb{F}$. As $p$ has the form \eqref{ReprPolyLinearCombMono}, then by the Frobenius isomorphism, $p^2 = \sum_{\alpha\in (\mathbb{N}\cup \{0\})^n}c_{\alpha}^2z^{2\alpha}$ and
\begin{gather*}
\operatorname{MULT}_{\ell^2}(p^2) = \sum_{\alpha\in (\mathbb{N}\cup \{0\})^n}c_{\alpha}^2\prod_{i=1}^n\ell_i^{2\alpha_i} = \left(\sum_{\alpha\in (\mathbb{N}\cup \{0\})^n}c_{\alpha}\prod_{i=1}^n\ell_i^{\alpha_i}\right)^2\in \mathbb{F}.
\end{gather*}
Now we prove uniqueness. Let $c, d \in \FF$ be such that $r^2 = c^2+\mathcal{I}(\ell^2) = d^2+\mathcal{I}(\ell^2)$. Hence, by Lemma \ref{lem:MultFuncsBasicProperties}, $c^2=d^2$ in the field $\FF$. Hence $0=c^2 - d^2 = (c - d)^2$, implying $c=d$.
\end{proof}

This lemma leads us to the next definition introduced in \cite[Sec.\ 2]{13GMT}.
\begin{Def}\label{def:abs_val}
Let $\ell \in \mathbb{F}^n$. The absolute value $|r|$ of $r\in \R(\ell^2)$ is defined as the unique element in $\FF\subseteq \R(\ell^2)$ satisfying $r^2=|r|^2$ {\upshape{(}}cf.\ Lemma \ref{lem:ring_const_square}{\upshape{)}}.
\end{Def}
Fundamental properties of the absolute value $|\cdot|:\R(\ell^2)\rightarrow \FF\subseteq \R(\ell^2)$ are described next. Since the proofs are straightforward, we leave them to the reader (as also done in \cite{13GMT}).
\begin{lemma}\label{lem:abs_add_mult}
Let $\ell \in \mathbb{F}^n$ and $r_1,r_2,r\in \mathcal{R}(\ell^2)$. Then:
\begin{itemize}
    \item[{\upshape{(}}a{\upshape{)}}] $|r_1r_2| = |r_1||r_2|$ and $|r_1 + r_2| = |r_1| + |r_2|$; 
    \item [{\upshape{(}}b{\upshape{)}}] $|r|\not=0$ $\Leftrightarrow$ $r^2\not=0$ $\Leftrightarrow$ $r$ is invertible, in which case $r^{-1}=|r|^{-2}r$;
    \item[{\upshape{(}}c{\upshape{)}}] $r\in \FF$ $\Rightarrow$ $|r|=r$.
\end{itemize}
\end{lemma}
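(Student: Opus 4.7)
The plan is to prove each part directly from the defining equation $r^2 = |r|^2$ together with the uniqueness clause of Lemma \ref{lem:ring_const_square}, exploiting the Frobenius identity $(a+b)^2 = a^2 + b^2$ that holds throughout $\mathcal{R}(\ell^2)$ because $\operatorname{char}(\mathbb{F}) = 2$. The recurring tactic will be: produce a candidate element $c \in \mathbb{F} \subseteq \mathcal{R}(\ell^2)$ whose square equals the square of the quantity whose absolute value I want to compute, and then invoke uniqueness to conclude that $c$ is that absolute value.

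For part (a), I would handle the product first: compute $(r_1 r_2)^2 = r_1^2 r_2^2 = |r_1|^2 |r_2|^2 = (|r_1||r_2|)^2$, note that $|r_1||r_2| \in \mathbb{F}$, and apply uniqueness from Lemma \ref{lem:ring_const_square} to conclude $|r_1 r_2| = |r_1||r_2|$. The sum is treated identically after invoking Frobenius in characteristic two: $(r_1+r_2)^2 = r_1^2 + r_2^2 = |r_1|^2 + |r_2|^2 = (|r_1|+|r_2|)^2$, with $|r_1|+|r_2| \in \mathbb{F}$, so uniqueness delivers $|r_1+r_2| = |r_1|+|r_2|$.

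For part (b), I would first observe the chain $|r| = 0 \iff |r|^2 = 0 \iff r^2 = 0$, where the first equivalence is because $\mathbb{F}$ is a field (hence an integral domain) and the second is the definition of $|r|$. To connect $r^2 \neq 0$ with invertibility, the candidate inverse $|r|^{-2} r$ works: when $|r| \neq 0$, the scalar $|r|^{-2} \in \mathbb{F}$ exists, and $r \cdot (|r|^{-2} r) = |r|^{-2} r^2 = |r|^{-2}|r|^2 = 1$, giving both that $r$ is invertible and the displayed formula. Conversely, if $rs = 1$ for some $s \in \mathcal{R}(\ell^2)$, then $r^2 s^2 = 1 \neq 0$, so $r^2 \neq 0$. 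Part (c) is immediate: for $r \in \mathbb{F}$, the element $r$ itself lies in $\mathbb{F}$ and satisfies $r^2 = r^2$, so uniqueness forces $|r| = r$.

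There is no serious obstacle; the only subtle point is that uniqueness in Lemma \ref{lem:ring_const_square} is stated for squares of the form $c^2$ with $c \in \mathbb{F}$, so in each application I must verify that the candidate scalar produced ($|r_1||r_2|$, $|r_1|+|r_2|$, or $r$ itself) is indeed a member of $\mathbb{F}$ rather than merely of $\mathcal{R}(\ell^2)$. This is automatic in each case because absolute values take values in $\mathbb{F}$ by Definition \ref{def:abs_val} and $\mathbb{F}$ is closed under sums and products, so invoking uniqueness is legitimate throughout.
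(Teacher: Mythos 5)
Your proof is correct, and it follows the natural argument the authors clearly intended when they left the verification to the reader: everything reduces to the uniqueness clause of Lemma \ref{lem:ring_const_square} together with the Frobenius identity in characteristic two, and you have correctly checked at each step that the candidate scalar lives in $\mathbb{F}$ so that uniqueness applies. The converse direction of part (b), using $rs = 1 \Rightarrow r^2 s^2 = 1 \neq 0$, is a nice clean way to close the loop.
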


The final lemma is new and will be crucial in the following subsections.
\begin{lemma}\label{lem:infinite_invertible_projection}
If $\mathbb{F}$ is infinite, $p \in \mathbb{F}[z]$, and $p\not=0$, then there exists a $n$-tuple $\ell \in \mathbb{F}^n$ such that $\pi_{\ell^2}(p)$ is invertible.
\end{lemma}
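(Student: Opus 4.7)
The plan is to reduce invertibility of $\pi_{\ell^2}(p)$ to a nonvanishing statement about the polynomial $p$ evaluated at a point in $\mathbb{F}^n$, and then apply the standard fact that a nonzero polynomial over an infinite field does not vanish identically.

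First, by Lemma \ref{lem:abs_add_mult}(b), $\pi_{\ell^2}(p)$ is invertible in $\mathcal{R}(\ell^2)$ if and only if $\pi_{\ell^2}(p)^2 \neq 0$, i.e., if and only if $p^2 \notin \mathcal{I}(\ell^2)$. Since $\pi_{\ell^2}$ is an algebra homomorphism, this equals $\pi_{\ell^2}(p^2)$. Repeating the Frobenius-type computation from the proof of Lemma \ref{lem:ring_const_square}, if $p = \sum_{\alpha} c_\alpha z^\alpha$, then $p^2 = \sum_\alpha c_\alpha^2 z^{2\alpha}$, and hence
\begin{gather*}
\operatorname{MULT}_{\ell^2}(p^2) = \sum_{\alpha} c_\alpha^2 \prod_{i=1}^n \ell_i^{2\alpha_i} = \Bigl(\sum_{\alpha} c_\alpha \prod_{i=1}^n \ell_i^{\alpha_i}\Bigr)^2 = p(\ell)^2 \in \mathbb{F}.
\end{gather*}

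Next, using Lemma \ref{lem:MultFuncsBasicProperties}, which gives $\pi_{\ell^2}\circ \operatorname{MULT}_{\ell^2} = \pi_{\ell^2}$, I obtain $\pi_{\ell^2}(p^2) = \pi_{\ell^2}(p(\ell)^2)$. The key subsidiary point is that $\pi_{\ell^2}$ restricted to $\mathbb{F}$ is injective: if $c \in \mathbb{F}$ lies in $\mathcal{I}(\ell^2)$, then $\operatorname{MULT}_{\ell^2}(c) = 0$ by Lemma \ref{lem:MultFuncsBasicProperties}, but since $c$ is multilinear, $\operatorname{MULT}_{\ell^2}(c) = c$, forcing $c = 0$. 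Therefore $\pi_{\ell^2}(p)^2 \neq 0$ if and only if $p(\ell)^2 \neq 0$, which in $\mathbb{F}$ is equivalent to $p(\ell) \neq 0$.

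Finally, since $p \in \mathbb{F}[z]$ is a nonzero polynomial and $\mathbb{F}$ is infinite, an easy induction on $n$ (or the standard Schwartz–Zippel-type lemma for infinite fields) produces some $\ell = (\ell_1, \ldots, \ell_n) \in \mathbb{F}^n$ with $p(\ell) \neq 0$. For this choice of $\ell$, the chain above shows $\pi_{\ell^2}(p)$ is invertible in $\mathcal{R}(\ell^2)$, completing the proof. There is no genuine obstacle here; the only delicate bookkeeping is to verify that the formal square in $\mathcal{R}(\ell^2)$ collapses to an honest field element via $\operatorname{MULT}_{\ell^2}$, which is exactly the content of the Frobenius calculation already used in Lemma \ref{lem:ring_const_square}.
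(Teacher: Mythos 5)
Your proof is correct and follows essentially the same strategy as the paper: reduce invertibility of $\pi_{\ell^2}(p)$ to the condition $p(\ell)\neq 0$ via Lemma~\ref{lem:abs_add_mult}(b) and the Frobenius squaring computation, then invoke the standard nonvanishing fact for nonzero polynomials over an infinite field. The only cosmetic difference is in the middle step: you pass through $\operatorname{MULT}_{\ell^2}$ and the injectivity of $\pi_{\ell^2}$ on constants to show $\pi_{\ell^2}(p^2)=\pi_{\ell^2}\bigl(p(\ell)^2\bigr)$, whereas the paper argues by contradiction, writing $p^2=\sum_{i=1}^n(z_i^2+\ell_i^2)q_i$ when $p^2\in\mathcal{I}(\ell^2)$ and evaluating directly at $z=\ell$ to obtain $p(\ell)^2=0$.
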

\begin{proof}
First, as $\mathbb{F}$ is infinite and $p$ is not the zero polynomial, it follows by \cite[p.\ 3, Prop.\ 5]{15CLO} that there exists an $\ell \in \mathbb{F}^n$ such that $p(\ell) \ne 0$. Next, suppose that $\pi_{\ell^2}(p)$ is not invertible. Then by Lemma \ref{lem:abs_add_mult}, $\pi_{\ell^2}(p^2)=\pi_{\ell^2}(p)^2 = 0$. Hence, $p^2 \in \mathcal{I}(\ell^2)$, and we can write [cf.\ (\ref{ideal_equation})] 
\begin{equation*}
p^2 = \sum_{i = 1}^n{(z_i^2 + \ell_i^2)q_i},
\end{equation*}
where $q_1, \ldots, q_n \in \mathbb{F}[z]$. Evaluating this at $z=\ell$ implies $p(\ell)^2=p^2(\ell) = 0$, a contradiction that $p(\ell) \ne 0$. Therefore, $\pi_{\ell^2}(p)$ must be invertible.
\end{proof}

The hypothesis that $\mathbb{F}$ is infinite cannot be dropped in Lemma \ref{lem:infinite_invertible_projection} as the following example shows.

\begin{example}\label{ex:CantDropInfiniteFieldHyp}
     Assume that $\mathbb{F}$ is finite and consider the polynomial
    \begin{equation*}
    p = \prod_{c \in \mathbb{F}}(z_1^2 + c^2) \in \mathbb{F}[z].
    \end{equation*}
    Since $p \in \mathcal{I}(\ell^2)$, we have that  $\pi_{\ell^2}(p) = 0$ for every $\ell \in \mathbb{F}^n$, but $p$ is not the zero polynomial in $\mathbb{F}[z]$.
\end{example}

\subsection{Realizations under quotient rings}\label{sec:RealizUnderQRings}
In this section, we extend part of the Bessmertny\u{\i} realization theory to the elements of the quotient ring $\mathcal{R}(\ell)$, and, for our purpose, we focus only on symmetric realizers. Our main result is the following theorem, the proof of which is postponed until after we prove a series of technical lemmas.
\begin{thm}\label{thm:ring_realizable_linear}
If $\ell \in \mathbb{F}^n$ and $r \in \mathcal{R}(\ell^2)$ is realizable, then $r$ is linear.
\end{thm}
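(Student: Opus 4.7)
The plan is to suppose $r \in \mathcal{R}(\ell^2)$ admits a symmetric realization, i.e., $r = A/A_{22}$ for some $m\times m$ symmetric matrix $A=[a_{ij}]$ over $\mathcal{R}(\ell^2)$ whose entries are linear (images under $\pi_{\ell^2}$ of linear polynomials from $\mathbb{F}[z]$) and whose lower-right block $A_{22}$ is invertible in $\mathcal{R}(\ell^2)^{(m-1)\times(m-1)}$. I will then argue by induction on $m$ that $A$ can be reduced, via the transformations CLEAN, ADD, and ISOLATE from [13GMT, Sec.~4], to a normal form from which the linearity of $r$ can be read off directly.

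The first step is to combine three ingredients. Schur's determinant formula (Lemma \ref{LemSchurDetFormula}), which extends verbatim to any commutative ring with identity and in particular to $\mathcal{R}(\ell^2)$, gives $\det A = \det(A_{22})\cdot r$; taking absolute values via Lemma \ref{lem:abs_add_mult}(a)-(b) produces $|\det A| = |\det A_{22}|\cdot |r|$ with $|\det A_{22}| \ne 0$. Next, the ``simplified determinant of a symmetric matrix in characteristic $2$'' (Lemma \ref{lem:det_inv}) will exploit the fact that in the Leibniz expansion of $\det A$, non-involution permutations pair up with their inverses and cancel because $A^T=A$ and $\operatorname{char}\mathbb{F}=2$, so only involutions $\sigma$ contribute, yielding
\begin{equation*}
\det A \;=\; \sum_{\sigma\text{ involution}}\ \prod_{i\in\mathrm{Fix}(\sigma)} a_{ii} \ \prod_{(i,j)\in\mathrm{Tr}(\sigma)} a_{ij}^{2}.
\end{equation*}
Combined with the Frobenius identity in characteristic $2$, each off-diagonal entry thus enters $\det A$ only through its square $a_{ij}^2 = |a_{ij}|^2 \in \mathbb{F}$, so the off-diagonal structure is confined to $\mathbb{F}$-valued coefficients, while the variables $z_1,\ldots,z_n$ enter the formula only through the \emph{diagonal} entries $a_{ii}$ (which are themselves linear in $z$).

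The second step is the reduction argument. Using ADD, I symmetrically add $\mathbb{F}$-linear combinations of columns of $A_{22}$ into $A_{12}$ (and correspondingly the rows) to zero out as many off-diagonal coefficients as possible; this does not change $A/A_{22}$ because it corresponds to multiplying $A$ by an invertible congruence factor $P^TAP$ with $P$ upper triangular of the appropriate form. CLEAN is then applied to strip off any diagonal blocks of $A_{22}$ that have become decoupled from $r$, thereby strictly reducing $m$ and invoking the induction hypothesis on the smaller realizer. ISOLATE handles the edge case in which the reduction gets stuck because a diagonal entry of $A$ already equals $|a_{ij}|^2$ times an indeterminate: in that case the symmetric involution expansion forces a factorization that identifies $r$ with a single diagonal entry modulo $\mathcal{I}(\ell^2)$, which is linear. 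The base case $m=1$ is immediate since the pencil is itself linear.

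The main obstacle is the bookkeeping for the reduction. One must check that CLEAN/ADD/ISOLATE each preserve (i) symmetry of the pencil, (ii) invertibility of the $A_{22}$-block in $\mathcal{R}(\ell^2)^{(m-1)\times(m-1)}$ (which, by Lemma \ref{lem:abs_add_mult}(b), is equivalent to $|\det A_{22}|\ne 0$), and (iii) the value of the Schur complement, while strictly decreasing a well-chosen complexity measure (e.g.\ the lexicographic pair consisting of $m$ and the number of nonzero off-diagonal entries of $A$). The subtlety is that the ring $\mathcal{R}(\ell^2)$ is not a domain, so inversions must be tracked through the absolute value, and the quadratic relations $z_i^2=\ell_i^2$ must be used judiciously to replace $z_i^2$ by the constant $\ell_i^2$ during ADD so that the reduced pencil remains linear. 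Once these invariants are established, the induction closes and forces $r$ to be linear.
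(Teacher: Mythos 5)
Your high-level template is right (induction on the size of the realizer, reduce via CLEAN/ADD/ISOLATE, feed the result through the involution formula for $\det$ in characteristic $2$), but the roles you assign to the three operations are essentially scrambled, and the case analysis that actually drives the reduction is missing. In the paper's argument, CLEAN does not ``strip off decoupled blocks'' and does not change $m$ at all; it replaces every off-diagonal entry by its absolute value in $\mathbb{F}$, which leaves $\det A$ and $\det A_{22}$ unchanged by Lemma~\ref{lem:det_inv} and reduces to the normal form ``constant off-diagonals, linear diagonals.'' ISOLATE is not an edge-case repair: it is the workhorse that, around an \emph{invertible} diagonal entry of $A_{22}$, zeroes out that entry's entire row and column (Lemma~\ref{lem:zero_out_inv}), after which one deletes that row and column to drop $m$ by one (Lemma~\ref{lem:realize_reduce_dim}). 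ADD is only ever used as a subroutine inside ISOLATE, not as a standalone Gaussian-elimination step that ``zeroes out as many off-diagonal coefficients as possible'' --- after CLEAN, the off-diagonals are already constants and there is no reason (or mechanism) to drive them to zero one by one, so the lexicographic ``complexity measure'' you propose is never actually shown to decrease.

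The genuine gap is the dichotomy on which the induction turns: having cleaned $A$, either $A_{22}$ has a nonzero diagonal entry or it does not. If it does not, then $A_{22}$ is an $\mathbb{F}$-matrix, $\det A_{22}\in\mathbb{F}^\times$, and by Lemma~\ref{lem:det_inv} the only non-constant contribution to $\det A$ comes from the single $1\times 1$ diagonal block of $A_{11}$, so $\det A$ and hence $r=(\det A)(\det A_{22})^{-1}$ are already linear --- this is the terminal case, and your proposal never reaches it. If $A_{22}$ does have a nonzero diagonal entry, one must further split on whether some diagonal entry is \emph{invertible}; if it is, ISOLATE plus deletion reduces $m$, but if every nonzero diagonal entry of $A_{22}$ is a non-invertible element of $\mathcal{R}(\ell^2)$, you need the auxiliary construction of Lemma~\ref{lem:the_gross_one}, which enlarges the realizer by one dimension to manufacture an invertible diagonal entry before ISOLATE can apply. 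Your proposal has no analogue of this step, and without it the induction does not close: in a non-domain like $\mathcal{R}(\ell^2)$ a nonzero diagonal entry need not be invertible, and you cannot pivot on it. Finally, the base case is $m=2$ (a short explicit computation using $L_2^{-1}=|L_2|^{-2}L_2$ from Lemma~\ref{lem:abs_add_mult}), not $m=1$, since the partition requires a nonempty $A_{22}$.
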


First note that the Schur's determinant formula (Lemma \ref{LemSchurDetFormula}) will extend to rings with multiplicative identity. This can be seen by replacing the hypothesis $\det A_{22}\not=0$ with $\det A_{22}$ is invertible in the ring; the proof of the statement follows immediately using the factorization \eqref{AitkenBlockFactorizationFormula}. This leads to us to the following more useful definition.
\begin{Def}\label{real}
Let $r \in \mathcal{R}(\ell)$ for some $n$-tuple $\ell \in \mathbb{F}^n$. We say that $r$ is realizable if there exists a symmetric matrix in $2\times2$-block form $A = [A_{ij}]_{i,j=1,2} \in \mathcal{R}(\ell)^{m \times m}$ such that all entries in $A$ are linear, $\det{A_{22}}$ is invertible, and
\begin{equation*}
\det(A) = r\det(A_{22}),
\end{equation*}
in which case we call $A$ a realizer of $r$.
\end{Def}

The realizability theory we develop here will rely on the next lemma whose proof is a straightforward extension of \cite[Prop.\ 2.1]{13GMT}. We leave the proof to the reader.
\begin{lemma}\label{lem:det_inv}
Let $\mathcal{R}$ be a ring with multiplicative identity having characteristic $2$. If $A \in \mathcal{R}^{m \times m}$ is symmetric then
\[
\det{A} = \sum_\sigma\prod_{i=1}^ma_{i,\sigma(i)},
\]
where $\sigma$ ranges over all permutations from $\{1, \ldots, m\}$ to itself that are involutions, i.e., $\sigma=\sigma^{-1}$.
\end{lemma}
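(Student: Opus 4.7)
The plan is to reduce the Leibniz expansion
\[
\det A \;=\; \sum_{\sigma \in S_m} \operatorname{sgn}(\sigma) \prod_{i=1}^m a_{i,\sigma(i)}
\]
to a sum over involutions by exploiting both hypotheses: the ring has characteristic two and $A$ is symmetric. First I would use $\operatorname{char}(\mathcal{R})=2$ to collapse all signs to $1$, so that the formula becomes simply $\sum_{\sigma \in S_m}\prod_{i=1}^m a_{i,\sigma(i)}$.

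The second and main step is to pair each non-involution $\sigma$ with its distinct inverse $\sigma^{-1}\neq\sigma$ and show that their contributions cancel. The key observation is the identity
\[
\prod_{i=1}^m a_{i,\sigma^{-1}(i)} \;=\; \prod_{j=1}^m a_{\sigma(j),j} \;=\; \prod_{j=1}^m a_{j,\sigma(j)},
\]
obtained by the reindexing $j=\sigma^{-1}(i)$ (which bijects $\{1,\ldots,m\}$ with itself) followed by the symmetry hypothesis $a_{\sigma(j),j}=a_{j,\sigma(j)}$. Hence $\sigma$ and $\sigma^{-1}$ produce the same monomial, and their combined contribution equals $2\prod_i a_{i,\sigma(i)}=0$ in characteristic two. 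After this pairwise cancellation across the non-involution part of $S_m$, only involutions remain, which yields the claimed formula.

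The only subtlety is that the rearrangement of factors above relies on commutativity of $\mathcal{R}$, which is built into the Leibniz definition of $\det$ the lemma presupposes (and which holds in the intended applications where $\mathcal{R}=\mathcal{R}(\ell)$ is a quotient of $\mathbb{F}[z]$). No real obstacle is anticipated: the entire proof is a short sign-and-symmetry cancellation argument of the kind routinely used in characteristic two, and the write-up should take only a few lines beyond the plan above.
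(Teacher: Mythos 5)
Your argument is correct and is exactly the standard involution-pairing proof that the paper alludes to by citing \cite[Prop.\ 2.1]{13GMT} and omitting the details: kill all signs via $\operatorname{char}(\mathcal{R})=2$, then pair each non-involution $\sigma$ with $\sigma^{-1}\ne\sigma$, note $\prod_i a_{i,\sigma^{-1}(i)}=\prod_j a_{\sigma(j),j}=\prod_j a_{j,\sigma(j)}$ by reindexing and symmetry, and cancel the pair in characteristic two. Your remark that commutativity of $\mathcal{R}$ is needed (and tacitly assumed, since $\det$ is used) is apt and worth keeping.
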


The above result is key because there are some transformations (namely, CLEAN, ADD, and ISOLATE from \cite[Sec.\ 4]{13GMT}) that we can apply to a realizer $A=[A_{ij}]_{i,j=1,2}$ of a realizable $r \in \mathcal{R}(\ell^2)$ that yields a realizer $B=[B_{ij}]_{i,j=1,2}$ (with block partition conformal to $A=[A_{ij}]_{i,j=1,2}$) of $r$ in a simpler form. We first show there is a realizer of $r\in\R(\ell^2)$ with constant off-diagonal entries. This will be a consequence of the absolute value (see Def.\ \ref{def:abs_val}) and its properties described in Lemma \ref{lem:abs_add_mult}.
We will need the following definition of the CLEAN function from \cite[p.\ 1371]{13GMT}:
\begin{gather}
\operatorname{CLEAN} : \R(\ell^2)^{m \times m} \rightarrow \R(\ell^2)^{m \times m},\label{Def:clean1}\\
\operatorname{CLEAN}(A)_{ij} = \begin{cases}
    |a_{ij}| & \text{ if }i \ne j\\
    \;a_{ii} & \text{ if }i = j
\end{cases}\label{Def:clean2}.
\end{gather}
Note that the $\operatorname{CLEAN}(\cdot)$ operator  maps symmetric matrices to symmetric matrices in $\R(\ell^2)^{m \times m}$. Furthermore, if $A \in \R(\ell^2)^{m \times m}$, then $\operatorname{CLEAN}[\operatorname{CLEAN}(A)] = \operatorname{CLEAN}(A)$ and, in particular, $\operatorname{CLEAN}(A)=A$ if $A$ has constant off-diagonal entries.

Next, for each $i \in \{1, \ldots, m\}$, $j \in \{1, \ldots, m\}$ with $i \ne j$, and $\alpha \in \R(\ell^2)$, we recall the $\operatorname{ADD}$ function from \cite[p.\ 1371, Algorithm 1]{13GMT},
\begin{gather}
\operatorname{ADD}_{i,j,\alpha} : \R(\ell^2)^{m \times m} \rightarrow \R(\ell^2)^{m \times m},   \label{alg:add} 
\end{gather}
defined by the following algorithm: Given a matrix $A\in \R(\ell^2)^{m \times m}$, first replace it's $j$th row $R_j$ by the sum of the $j$th row and $\alpha$ times the $i$th row $R_i$, i.e., $R_j + \alpha R_i \rightarrow R_j$. Next, to this matrix, replace the $j$th column $C_j$ by the sum of the $j$th column and $\alpha$ times the $i$th column $C_i$, i.e., $C_j + \alpha C_i \rightarrow C_j$. Finally, to the resulting matrix apply the CLEAN function [cf.\ \eqref{Def:clean1}, \eqref{Def:clean2}]. This yields the matrix $\operatorname{ADD}_{i,j,\alpha}(A)\in \R(\ell^2)^{m \times m}$.

Note that when $A$ has constant off-diagonal entries, applying the algorithm $\operatorname{ADD}_{i,j,\alpha}(A)$ only modifies the $j$th row and $j$th column of $A$.

The next lemma describes the basic properties of $\operatorname{CLEAN}$ and $\operatorname{ADD}$. We omit the proof as the result about $\operatorname{CLEAN}$ follows immediately from Lemma \ref{lem:det_inv} (and is similar to the proof in \cite[Lemma 4.5]{13GMT}) and the proof of the result about $\operatorname{ADD}$ is essentially the same as in \cite[Lemma 4.6]{13GMT}. In comparison, our statement does not allow for $i=1$ since we need to keep both $\det A$ and $\det A_{22}$ unchanged when applying the operations CLEAN and ADD.
\begin{lemma}\label{lem:clean_realizes}
Let $\ell \in \FF^n$, $\alpha \in \R(\ell^2)$, and $i \in \{2, \ldots, m\}$, $j \in \{1, \ldots, m\}$ with $i \ne j$. If $r \in \R(\ell^2)$ has a realizer $A \in \R(\ell^2)^{m \times m}$, then both $\operatorname{CLEAN}(A)$ and $\operatorname{ADD}_{i,j,\alpha}(A)$ (with block partition conformal to $A$) are also realizers of $r$.
\end{lemma}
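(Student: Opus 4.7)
The plan is to verify, for both $B=\operatorname{CLEAN}(A)$ and $B=\operatorname{ADD}_{i,j,\alpha}(A)$, the four requirements of Definition \ref{real}: symmetry, linearity of every entry, invertibility of $\det B_{22}$, and $\det B=r\det B_{22}$. Throughout I work under the implicit convention (indicated by the hypothesis $i\ge 2$ and the remark preceding the lemma) that $A_{11}$ is $1\times 1$, so that rows and columns $2,\dots,m$ are precisely those of $A_{22}$; this is what makes the restriction $i\in\{2,\dots,m\}$ operative.

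For $B=\operatorname{CLEAN}(A)$, symmetry is immediate since $a_{ij}=a_{ji}$ yields $|a_{ij}|=|a_{ji}|$, and all entries remain linear because CLEAN does not alter diagonal entries and replaces off-diagonal entries with scalars in $\FF$ (which are linear). For the determinants, I apply Lemma \ref{lem:det_inv}: each involution $\sigma$ contributes a product in which every non-fixed index $k$ belongs to a transposition $(k,\sigma(k))$, contributing the factor $a_{k,\sigma(k)}\,a_{\sigma(k),k}=a_{k,\sigma(k)}^2$, which equals $|a_{k,\sigma(k)}|^2$ by the defining property of the absolute value (Definition \ref{def:abs_val}); fixed points contribute unchanged diagonal entries. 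Hence $\det A$ is invariant under CLEAN, and the same argument applied to $A_{22}$ gives the same for $\det A_{22}$, so the realizer conditions transfer.

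For $B=\operatorname{ADD}_{i,j,\alpha}(A)$, I write $B=\operatorname{CLEAN}(A')$ with $A':=EAE^T$ and $E:=I+\alpha e_je_i^T$; this encodes the row operation followed by the conforming column operation from the ADD algorithm. Then $A'$ is symmetric, and the matrix determinant lemma gives $\det E=1+\alpha\, e_i^T e_j=1$ (using $i\ne j$), so $\det A'=\det A$. The hypothesis $i\ge 2$ then forces $\det A'_{22}=\det A_{22}$: if $j=1$ the operations affect only row and column $1$, so $A'_{22}=A_{22}$; if $j\ge 2$ they restrict to the analogous elementary operations inside $A_{22}$, again of determinant $1$. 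The subsequent CLEAN step on the symmetric matrix $A'$ preserves both determinants by the argument of the previous paragraph.

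The one nontrivial point, and the place where both $\operatorname{char}(\FF)=2$ and the ring structure of $\R(\ell^2)$ enter essentially, is linearity of the diagonal entries of $\operatorname{CLEAN}(A')$. Off-diagonal entries become elements of $\FF$ and are therefore linear. For diagonal entries, $a'_{kk}=a_{kk}$ for $k\ne j$, while a direct expansion yields
\begin{equation*}
a'_{jj}=a_{jj}+\alpha a_{ij}+\alpha a_{ji}+\alpha^2 a_{ii}=a_{jj}+\alpha^2 a_{ii},
\end{equation*}
since $a_{ij}=a_{ji}$ and $\operatorname{char}(\FF)=2$ annihilate the cross terms. By Lemma \ref{lem:ring_const_square}, $\alpha^2=|\alpha|^2\in\FF$, so $\alpha^2 a_{ii}$ is an $\FF$-scalar times a linear element and is again linear; thus $a'_{jj}$ is linear, and every entry of $\operatorname{CLEAN}(A')$ is linear. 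This last linearity check is the main obstacle, in that it is the only step that fails without the structural input of characteristic $2$ (to kill the cross term) together with Lemma \ref{lem:ring_const_square} (to identify $\alpha^2$ with a scalar); once it is in place, the realizer properties for $\operatorname{ADD}_{i,j,\alpha}(A)$ follow from the preceding paragraphs.
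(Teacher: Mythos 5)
Your proof is correct and follows exactly the route the paper indicates for its omitted proof: invariance of $\det A$ and $\det A_{22}$ under CLEAN is read off from Lemma~\ref{lem:det_inv} (each involution contributes fixed-point diagonal entries times squares $a_{k\sigma(k)}^2=|a_{k\sigma(k)}|^2$, which CLEAN leaves unchanged), and the ADD step reduces to this by your observation $\operatorname{ADD}_{i,j,\alpha}(A)=\operatorname{CLEAN}(EAE^T)$ with $E=I+\alpha e_je_i^T$, $\det E=1$, together with the case split $j=1$ versus $j\ge 2$ that exploits $i\ge 2$ to keep $\det A_{22}$ unchanged. The one place worth flagging is that you correctly inferred the implicit convention (never stated in Definition~\ref{real} but used throughout Section~\ref{sec:Thms4&5}) that $A_{11}$ is $1\times 1$, without which the hypothesis $i\in\{2,\dots,m\}$ would not pin $i$ to an index of $A_{22}$; and your identification of the characteristic-$2$ cancellation plus Lemma~\ref{lem:ring_const_square} as the crux of the linearity check for $a'_{jj}=a_{jj}+\alpha^2a_{ii}$ is exactly right.
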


The goal of the next operation is to replace all the entries in the $i$th row and $i$th column with zeros, except for the $i$th diagonal entry. We can do this using the definition of the $\operatorname{ISOLATE}$ function from \cite[p.\ 1371, Algorithm 2]{13GMT}:
\begin{gather*}
\operatorname{ISOLATE}_{i} : \mathcal{D}_i\subseteq \R(\ell^2)^{m \times m} \rightarrow \R(\ell^2)^{m \times m},    
\end{gather*}
where $\mathcal{D}_i$ consists of those matrices $A\in \R(\ell^2)^{m \times m}$ with an invertible $i$th diagonal entry $a_{ii}$ (see Algorithm \ref{alg:isolate}).

\vspace{1em}
\DontPrintSemicolon
\begin{algorithm}[H]\label{alg:isolate}
\caption{$\operatorname{ISOLATE}_{i}(A)$}
    $B \leftarrow A$\;
    \For {$j = 1 \operatorname{to} m$} {
        \If{$j \ne i$} {
            $\alpha \leftarrow b_{ij} \times |b_{ii}|^{-1}$\;
            $B \leftarrow \operatorname{ADD}_{i, j, \alpha}(B)$\;
        }
    }
    \Return $B$
\end{algorithm}
\vspace{1em}

The next lemma is a technical result needed to prove Lemmas \ref{lem:zero_out_inv} and \ref{lem:the_gross_one}. 
\begin{lemma}\label{lem:isolate_helper}
If $A \in \R(\ell^2)^{m \times m}$ is a symmetric matrix with constant off-diagonal entries such that the $i$th diagonal entry $a_{ii}$ is invertible, then $B = \operatorname{ISOLATE}_i(A)$ has constant off-diagonal entries and has its $j$th diagonal entry $b_{jj}$ given by the formula
\[
b_{jj} = a_{jj} + a_{ij}^2a_{ii}^{-1}
\]
for all $j \in \{1, \ldots, m\}$ such that $j \ne i$.
\end{lemma}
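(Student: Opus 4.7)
The plan is to trace through Algorithm \ref{alg:isolate} iteration by iteration while maintaining several invariants. Write $B^{(0)}=A$ and let $B^{(k)}$ denote the matrix after the $k$-th pass through the loop, over index $j_k\in\{1,\dots,m\}\setminus\{i\}$. I would maintain that $B^{(k)}$ has constant off-diagonal entries, satisfies $b^{(k)}_{ii}=a_{ii}$ (still invertible), satisfies $b^{(k)}_{i,j_s}=b^{(k)}_{j_s,i}=0$ and $b^{(k)}_{j_s,j_s}=a_{j_s,j_s}+a_{i,j_s}^2 a_{ii}^{-1}$ for every $s\le k$, and agrees with $A$ in the entries $b_{i,j}$, $b_{j,i}$, $b_{j,j}$ for every $j$ not yet processed. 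The base case $k=0$ is immediate, and the lemma is exactly the content of these invariants at $k=m-1$.

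For the inductive step at $j=j_k$, first observe that $\alpha=b_{ij}|b_{ii}|^{-1}=a_{ij}|a_{ii}|^{-1}$ lies in $\FF$, since $a_{ij}\in\FF$ by the invariant and $|a_{ii}|^{-1}\in\FF$ follows from the invertibility of $a_{ii}$ via Lemma \ref{lem:abs_add_mult}(b). The operations $R_j\to R_j+\alpha R_i$ and $C_j\to C_j+\alpha C_i$ leave $b_{ii}$ and every $b_{j',j'}$ with $j'\ne j$ untouched, and for each $k\notin\{i,j\}$ the new $b_{j,k}$ is a sum of products of constants, hence in $\FF$. The only positions where a non-constant could appear are $b_{i,j}$ and $b_{j,i}$, which become $a_{ij}+\alpha a_{ii}$, and the diagonal entry $b_{j,j}$.

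The crux is the CLEAN step combined with the right characteristic-$2$ arithmetic. Using the additive property $|r_1+r_2|=|r_1|+|r_2|$ from Lemma \ref{lem:abs_add_mult}(a) and $|c|=c$ for $c\in\FF$ from Lemma \ref{lem:abs_add_mult}(c), the $(i,j)$ entry becomes $|a_{ij}+\alpha a_{ii}|=a_{ij}+\alpha|a_{ii}|=a_{ij}+a_{ij}=0$, and likewise for $(j,i)$. For the diagonal, tracking the row-then-column update and using $a_{j,i}=a_{i,j}$ gives $b_{j,j}^{\mathrm{new}}=a_{j,j}+2\alpha a_{ij}+\alpha^2 a_{ii}=a_{j,j}+\alpha^2 a_{ii}$ in characteristic $2$; substituting $\alpha^2=a_{ij}^2|a_{ii}|^{-2}$ together with the identity $a_{ii}^{-1}=|a_{ii}|^{-2}a_{ii}$ from Lemma \ref{lem:abs_add_mult}(b) yields exactly $a_{j,j}+a_{ij}^2 a_{ii}^{-1}$. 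Because CLEAN preserves diagonal entries and later loop iterations touch only rows and columns whose index differs from $j$, these values persist to the end of the algorithm, which closes the induction. The main obstacle is the verification that CLEAN forces the $(i,j)$ entry to zero, since the choice $\alpha=b_{ij}|b_{ii}|^{-1}$ rather than $\alpha=b_{ij}b_{ii}^{-1}$ only zeros out the absolute value of $b_{i,j}+\alpha a_{ii}$, not the entry itself; it is precisely the interplay of additivity of $|\cdot|$ and characteristic $2$ that makes the construction work.
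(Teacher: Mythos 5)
Your proof is correct and follows essentially the same strategy as the paper's: induction over the loop iterations of $\operatorname{ISOLATE}_i$, maintaining the invariants that processed rows/columns have their final values, unprocessed entries agree with $A$, off-diagonal entries stay constant, and $b_{ii}=a_{ii}$ remains invertible, with the crux being the characteristic-$2$ identity $a_{ii}^{-1}=|a_{ii}|^{-2}a_{ii}$ from Lemma \ref{lem:abs_add_mult}.(b). One small remark: you also carry the invariant $b_{i,j_s}=b_{j_s,i}=0$ for already-processed indices, which is not part of the conclusion of Lemma \ref{lem:isolate_helper}; the paper defers that claim (and the accompanying realizer-preservation, which requires $i\ge 2$) to Lemma \ref{lem:zero_out_inv}, so your argument actually proves a slightly stronger statement than was asked.
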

\begin{proof}
We first prove that, at the $j$th iteration of Algorithm \ref{alg:isolate}, the output matrix $B = [b_{kl}]$ has the following properties:
\begin{enumerate}
    \item[(1)] $b_{kk} = a_{kk} + a_{ik}^2a_{ii}^{-1}$ for all $k \in \{1, \ldots, j\}$ such that $k \ne i$,
    \item[(2)] $b_{ii} = a_{ii}$,
    \item[(3)] $b_{ik} = b_{ki} = a_{ik}$ for all $k \in \{j+1, \ldots, m\}$,
    \item[(4)] $b_{kk} = a_{kk}$ for all $k \in \{j+1, \ldots, m\}$, and
    \item[(5)] $B$ has constant off-diagonal entries.
\end{enumerate}
We will prove these claims using induction on $j$. For the two elementary operations in the definition of $\operatorname{ADD}$ (\ref{alg:add}), we define $D \in \R(\ell^2)^{m \times m}$ as the result of the row operation in the first step applied to $A$ and $E \in \R(\ell^2)^{m \times m}$ as the result of the column operation in the second step applied to $D$. 

Consider the base case $j = 1$. If $i = 1$, then case (1) does not occur and cases (2) -- (5) follows since $A = B$. Hence, we may assume that $i \ne 1$. Set $\alpha = a_{i1}|a_{ii}|^{-1}$ and $B = \operatorname{ADD}_{i,1,\alpha}(A)$. 

We first treat case (1). By the third step, we have $B=\text{CLEAN}(E)=\operatorname{ADD}_{i,1,\alpha}(A)$, and it follows that
\begin{align*}
b_{11} &= e_{11} = d_{11} + \alpha d_{1i} = a_{11} + \alpha a_{i1} + \alpha (a_{1i} + \alpha a_{ii}) = a_{11} + \alpha^2 a_{ii}\\
&=a_{11}+a_{i1}^2(|a_{ii}|^{-1})^2a_{ii}^2a_{ii}^{-1}=a_{11}+a_{i1}^2(|a_{ii}|^{-1})^2|a_{ii}|^2a_{ii}^{-1}=a_{11}+a_{i1}^2a_{ii}^{-1}.
\end{align*}
This proves (1). 
Next, we note that $E$ has the same entries as $A$ except for the entries on the first row and first column, and all other off-diagonal entries of $E$ are constant. Since the CLEAN algorithm leaves constant entries unchanged, this implies that all of the off-diagonal entries of $B=\text{CLEAN}(E)$ are constant and all of the off-diagonal entries except those in the first row and first column are the same as $A$. This proves cases (2) - (5), and completes the proof of the base case $j=1$. 

Now assume the statements (1) -- (5) are true for $j < m$. If $m = 1$, we are finished, so we treat the case of $m > 1$. 
We now show these statements are also true for $j+1$. 
If $j+1 = i$, then the matrix does not change at this iteration. Thus the statements follow from the inductive hypotheses. Hence, we may assume that $j+1 \ne i$. 

Let $C \in \R(\ell^2)^{m \times m}$ be the matrix at iteration $j$, $\alpha = c_{i,j+1}|c_{ii}|^{-1}$, and $B = \operatorname{ADD}_{i,j+1,\alpha}(C)$. Again by the third step, we have $B = \operatorname{CLEAN}(E) = \operatorname{ADD}_{i,j+1,\alpha}(C)$ so that
\begin{align*}
b_{j+1,j+1} &= e_{j+1, j+1} = d_{j+1,j+1} + \alpha d_{j+1,i} = c_{j+1,j+1} + \alpha c_{i,j+1} + \alpha(c_{j+1,i} + \alpha c_{ii})\\
&= c_{j+1,j+1} + \alpha c_{i,j+1} + \alpha c_{j+1,i} + \alpha^2 c_{ii}.
\end{align*}
By the inductive hypotheses, $c_{j+1,j+1} = a_{j+1,j+1}$, $c_{j+1,i} = c_{i,j+1} = a_{i,j+1}$, and $c_{ii} = a_{ii}$. Hence $\alpha = c_{i,j+1}|c_{ii}|^{-1} = a_{i,j+1}|a_{ii}|^{-1}$ and it follows that
\begin{align*}
b_{j+1,j+1} &= a_{j+1,j+1} + (a_{i,j+1}|a_{ii}|^{-1})^2a_{ii} = a_{j+1,j+1} + a_{i,j+1}^2(|a_{ii}|^{-1})^2a_{ii}^2a_{ii}^{-1} \\
&= a_{j+1,j+1} + a_{i,j+1}^2(|a_{ii}|^{-1})^2|a_{ii}|^2a_{ii}^{-1} = a_{j+1,j+1} + a_{i,j+1}^2a_{ii}^{-1}.
\end{align*}
This proves (1). 

Next, $E$ has the same entries as $C$ except for the entries in the $(j+1)$th row and $(j+1)$th column, and all other off-diagonal entries of $E$ are constant. Since CLEAN leaves constant entries unchanged, this implies that all of the off-diagonal entries of $B=\text{CLEAN}(E)$ are constant, and all of the off-diagonal entries except those in the $(j+1)$th row and $(j+1)$th column are the same as $C$.
By the inductive hypotheses, we know that $C$ has the same entries as $A$ in the rows and columns larger than $j$, so we conclude that $B$ also has the same entries when the rows and columns are larger than $j+1$. Therefore, cases (2) - (5) are true for $j+1$, and this completes the proof of the lemma.
\end{proof}

The next three lemmas are slight modifications of the results \cite[Lemmas 4.7 \& 4.8]{13GMT} and part of the proof of \cite[Theorem 4.2]{13GMT}. The proofs below require carefully tracking of how the algorithms effect a given realization (in the sense of our Def.\ \ref{real}).  
\begin{lemma}\label{lem:zero_out_inv}
Let $A = [A_{ij}]_{i,j=1,2} \in \R(\ell^2)^{m \times m}$ be a realizer of $r \in \R(\ell^2)$ with constant off-diagonal entries. If $A_{22}$ has an invertible diagonal entry $a_{ii}$, then $B = [B_{ij}]_{i,j=1,2} = \operatorname{ISOLATE}_i(A)$ is a realizer of $r$ with constant off-diagonal entries such that $b_{ii} = a_{ii}$ and $b_{ij} = b_{ji} = 0$ for all $j \in \{1, \ldots, m\}$ with $j \ne i$.
\end{lemma}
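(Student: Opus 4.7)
The plan is to interpret $\operatorname{ISOLATE}_i$ as the sequential composition of the operators $\operatorname{ADD}_{i,j,\alpha}$ for $j\in\{1,\ldots,m\}\setminus\{i\}$ (with $\alpha$ computed from the current matrix) and to read off all three conclusions from results already available. Since $a_{ii}$ is a diagonal entry of the block $A_{22}$, the index $i$ is at least $2$, so each call to $\operatorname{ADD}_{i,j,\alpha}$ satisfies the hypothesis of Lemma \ref{lem:clean_realizes}; a one-step induction along the loop therefore produces a realizer $B$ of $r$. Lemma \ref{lem:isolate_helper} then gives that $B$ has constant off-diagonal entries and that $b_{ii}=a_{ii}$.

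The only nontrivial point left is that $b_{ij}=b_{ji}=0$ for every $j\ne i$. I would prove this by analyzing a single iteration $j$. Combining invariants (2)--(4) from the proof of Lemma \ref{lem:isolate_helper} with symmetry shows that, at the start of iteration $j$, the current matrix $C$ still satisfies $c_{ii}=a_{ii}$ and $c_{ij}=c_{ji}=a_{ij}$, so $\alpha=a_{ij}|a_{ii}|^{-1}$. After the row operation $R_j\to R_j+\alpha R_i$ the $(j,i)$ entry becomes $a_{ji}+\alpha a_{ii}$, and (since the row operation does not touch $(i,i)$) after the subsequent column operation $C_j\to C_j+\alpha C_i$ the $(i,j)$ entry becomes $a_{ij}+\alpha a_{ii}$. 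A direct computation in characteristic $2$ using the defining identity $a_{ii}^2=|a_{ii}|^2$ gives
\begin{equation*}
(a_{ij}+\alpha a_{ii})^2=a_{ij}^2+a_{ij}^2|a_{ii}|^{-2}|a_{ii}|^2=a_{ij}^2+a_{ij}^2=0,
\end{equation*}
so by Def.\ \ref{def:abs_val} the CLEAN step turns both of these off-diagonal entries into $0$.

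The main (and essentially last) obstacle is to argue that the zeros created in iteration $j$, and the identity $b_{ii}=a_{ii}$, are preserved through the remaining iterations. This uses the observation made just before Lemma \ref{lem:clean_realizes}: once the matrix has constant off-diagonal entries, $\operatorname{ADD}_{i,j',\alpha'}$ only modifies the row and column indexed by $j'$. For any subsequent index $j'\notin\{i,j\}$ the positions $(i,j)$, $(j,i)$, and $(i,i)$ lie entirely outside row $j'$ and column $j'$ and are therefore left unchanged, yielding the desired conclusion.
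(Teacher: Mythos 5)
Your proposal is correct and follows essentially the same line as the paper: establish the realizer property via Lemma \ref{lem:clean_realizes} at each iteration, invoke Lemma \ref{lem:isolate_helper} for constant off-diagonal entries, and track the algorithm iteration by iteration to see that the entries in row and column $i$ are zeroed and then left undisturbed. Two small remarks: the statement of Lemma \ref{lem:isolate_helper} does not itself assert $b_{ii}=a_{ii}$ (you correctly extract this from invariant (2) inside its proof, as you note later); and your verification that the new off-diagonal entry vanishes is done by squaring and using the defining relation $r^2=|r|^2$, whereas the paper uses the additivity and multiplicativity of $|\cdot|$ from Lemma \ref{lem:abs_add_mult} directly — these are interchangeable.
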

\begin{proof}
First, by definition of $B$ and Lemma \ref{lem:isolate_helper}, we know that $B$ has constant off-diagonal entries. Now consider the for-loop defined in Algorithm \ref{alg:isolate}. We will prove that the following statement holds for each $1 \le j \le m$: at the end of iteration $j$, $B$ is a realizer of $r$, $b_{ii} = a_{ii}$, $b_{ki} = b_{ik} = 0$ for all $k \in \{1, \ldots, j\}$ with $k \ne i$, and $b_{ik} = a_{ik}$ for all $k \in \{j+1, \ldots, m\}$.

We prove this claim by induction on $j$. Consider the base case where $j = 1$. After the first step of the loop, the resulting matrix $B = \operatorname{ADD}_{i,1,\alpha}(A) $, where $\alpha = a_{i1} \times |a_{ii}|^{-1}$, is a realizer of $r$ by Lemma \ref{lem:clean_realizes}. Because the $\operatorname{ADD}_{i,1,\alpha}$ operation 
only modifies the first row and first column on a constant off-diagonal matrix, we know that $b_{ii}=a_{ii}$ and $b_{ik} = a_{ik}$ for all $k \in \{2, \ldots, m\}$. As $b_{1i} = b_{i1}$ (because $B$ is a realizer 
and hence symmetric), it suffices to show that $b_{i1} = 0$. Using Lemma \ref{lem:abs_add_mult} and the hypothesis that $A$ is a realizer (and hence symmetric) with constant off-diagonal entries, it follows that
\[
b_{i1} = \bigg|a_{i1} + a_{i1}|a_{ii}|^{-1}a_{ii}\bigg| = a_{i1} + a_{i1}|a_{ii}|^{-1}|a_{ii}| = a_{i1} + a_{i1} =2a_{i1}= 0.
\]

Now suppose that the statement is true for $j\leq m-1$. We will prove the statement is also true for $j+1$. If $j+1=i$, then $B$ remains unchanged by the algorithm and so the statement remains true in this case. We thus assume that $j+1 \ne i$. By the inductive hypothesis, $b_{i,j+1}=a_{i,j+1}$ and $ b_{ii}=a_{ii}$ yields $\alpha = b_{i,j+1} \times |b_{ii}|^{-1} = a_{i,j+1} \times |a_{ii}|^{-1}$. We conclude that $C = \operatorname{ADD}_{i,j+1,\alpha}(B)$, the result of the $(j+1)$-th iteration, is a realizer of $r$ by Lemma \ref{lem:clean_realizes}.

Next, again by the inductive hypotheses, the matrix $B$ has only constant off-diagonal entries, and hence $C$ has the same rows and columns as $B$ except for the $(j+1)$-th row and $(j+1)$-th column. From this it follows that $c_{ii} = b_{ii} = a_{ii}$, $c_{ik} = b_{ik} = a_{ik}$ for all $k \in \{j+2, \ldots, m\}$, and $c_{ik} = b_{ik} = 0$ for all $k \in \{1, \ldots, j\}$ such that $k \ne i$. Finally, we consider the entry $c_{i,j+1}$. Since Lemma \ref{lem:abs_add_mult} and the inductive hypotheses imply that
\[
c_{i,j+1} = |b_{i,j+1} + a_{i,j+1} |a_{ii}|^{-1} b_{ii}| = b_{i,j+1} + a_{i,j+1} |a_{ii}|^{-1} |b_{ii}| =2a_{i,j+1} = 0,
\]
the statement is true for $j+1$. This completes the proof of the lemma.
\end{proof}

The following is an example to illustrate the previous lemma and $\operatorname{ISOLATE_i}$ algorithm.
\begin{example}\label{ex:Lemmazero_out_inv}
    Let $\ell=(0,0)\in\mathbb{F}^2$, so that $\ell^2=(0^2,0^2)=(0,0)$, and consider $r=0\in \mathcal{R}(\ell^2)$. Following Def.\ \ref{real}, an example of a realizer of $r$ is the symmetric matrix
    \[
        A = [A_{ij}]_{i,j=1,2}=\left[\begin{array}{c; {2pt/2pt} ccc}
    0 & 0 & 0 & 0 \\\hdashline[2pt/2pt] 
    0 & 1 & 1 & 0\\
    0 & 1 & z_1+1 & 1\\
    0 & 0 & 1 & z_1
\end{array}\right] \in \mathcal{R}(\ell^2)^{4\times 4},
\]
since all the entries of $A$ are linear, $\det(A_{22}) = z_1^2+1=1$ is invertible in the ring $\mathcal{R}(\ell^2)$, and  $\det(A)=0\det(A_{22}).$ Notice that the matrix $A=[a_{ij}]_{i,j=1,\ldots, 4}$ has constant off-diagonal entries and in this ring, $a_{33}=z_1+1$ is an invertible diagonal entry of its $(2,2)$-block {\upshape{[}}since $a_{33}^2=(z_1+1)^2=z_1^2+1=1\not=0$ {\upshape{(}}which also implies that $|a_{33}|=1${\upshape{)]}}. Let us now show that $B = [B_{ij}]_{i,j=1,2} = \operatorname{ISOLATE}_3(A)$ is a realizer of $r=0$ with constant off-diagonal entries such that $b_{33} = a_{33}$ and $b_{3j} = b_{j3} = 0$ for all $j \in \{1, \ldots, 4\}$ with $j \ne 3$ {\upshape{(}}as predicted by Lemma \ref{lem:zero_out_inv}{\upshape{)}}. To do this we next follow each step in Algorithm \ref{alg:isolate} to calculate $\operatorname{ISOLATE}_3(A)$ in this example.

$\underline{j=1}$: As $j=1\not=3,$ then $\alpha:=b_{31}|b_{33}|^{-1}=a_{31}=0$. Hence, $\operatorname{ADD}_{3,1,0}(B)$ remains unchanged from $B$ {\upshape{(}}hence from $A${\upshape{)}} at this step according to the $\operatorname{ADD}$ algorithm \eqref{alg:add}.

$\underline{j=2}$: As $j=2\not=3,$ then $\alpha:=b_{32}|b_{33}|^{-1}=b_{32}=a_{32}=1$. We calculate $\operatorname{ADD}_{3,2,1}(B)$:
\begin{align*}
    \left[\begin{array}{c; {2pt/2pt} ccc}
    0 & 0 & 0 & 0 \\\hdashline[2pt/2pt] 
    0 & 1 & 1 & 0\\
    0 & 1 & z_1+1 & 1\\
    0 & 0 & 1 & z_1
\end{array}\right]&\xrightarrow{R_2+1R_3\rightarrow R_2}\left[\begin{array}{c; {2pt/2pt} ccc}
    0 & 0 & 0 & 0 \\\hdashline[2pt/2pt] 
    0 & 0 & z_1 & 1\\
    0 & 1 & z_1+1 & 1\\
    0 & 0 & 1 & z_1
\end{array}\right]\\
&\xrightarrow{C_2+1C_3\rightarrow C_2}\left[\begin{array}{c; {2pt/2pt} ccc}
    0 & 0 & 0 & 0 \\\hdashline[2pt/2pt] 
    0 & z_1 & z_1 & 1\\
    0 & z_1 & z_1+1 & 1\\
    0 & 1 & 1 & z_1
\end{array}\right]\\
&\xrightarrow{\operatorname{CLEAN}}\operatorname{CLEAN}\left(\left[\begin{array}{c; {2pt/2pt} ccc}
    0 & 0 & 0 & 0 \\\hdashline[2pt/2pt] 
    0 & z_1 & z_1 & 1\\
    0 & z_1 & z_1+1 & 1\\
    0 & 1 & 1 & z_1
\end{array}\right]\right)\\
&=\left[\begin{array}{c; {2pt/2pt} ccc}
    0 & 0 & 0 & 0 \\\hdashline[2pt/2pt] 
    0 & z_1 & 0 & 1\\
    0 & 0 & z_1+1 & 1\\
    0 & 1 & 1 & z_1
\end{array}\right]=:\operatorname{ADD}_{3,2,1}(B).
\end{align*}
To complete this step of the loop, we make the replacement $B\leftarrow\operatorname{ADD}_{3,2,1}(B)$. 

$\underline{j=3}$: Skip as $j=3$.

$\underline{j=4}$: As $j=4\not=3,$  then $\alpha:=b_{34}|b_{33}|^{-1}=b_{34}=1$. Here we calculate now $\operatorname{ADD}_{3,4,1}(B)$: 
\begin{align*}
    \left[\begin{array}{c; {2pt/2pt} ccc}
    0 & 0 & 0 & 0 \\\hdashline[2pt/2pt] 
    0 & z_1 & 0 & 1\\
    0 & 0 & z_1+1 & 1\\
    0 & 1 & 1 & z_1
\end{array}\right]&\xrightarrow{R_4+1R_3\rightarrow R_4}\left[\begin{array}{c; {2pt/2pt} ccc}
    0 & 0 & 0 & 0 \\\hdashline[2pt/2pt] 
    0 & z_1 & 0 & 1\\
    0 & 0 & z_1+1 & 1\\
    0 & 1 & z_1 & z_1+1
\end{array}\right]\\
&\xrightarrow{C_4+1C_3\rightarrow C_4}\left[\begin{array}{c; {2pt/2pt} ccc}
    0 & 0 & 0 & 0 \\\hdashline[2pt/2pt] 
    0 & z_1 & 0 & 1\\
    0 & 0 & z_1+1 & z_1\\
    0 & 1 & z_1 & 1
\end{array}\right]\\
&\xrightarrow{\operatorname{CLEAN}} \operatorname{CLEAN}\left(\left[\begin{array}{c; {2pt/2pt} ccc}
    0 & 0 & 0 & 0 \\\hdashline[2pt/2pt] 
    0 & z_1 & 0 & 1\\
    0 & 0 & z_1+1 & z_1\\
    0 & 1 & z_1 & 1
\end{array}\right]\right)\\
&=\left[\begin{array}{c; {2pt/2pt} ccc}
    0 & 0 & 0 & 0 \\\hdashline[2pt/2pt] 
    0 & z_1 & 0 & 1\\
    0 & 0 & z_1+1 & 0\\
    0 & 1 & 0 & 1
\end{array}\right]=:\operatorname{ADD}_{3,4,1}(B).
\end{align*}
To complete this step of the loop, we make the replacement $B\leftarrow\operatorname{ADD}_{3,4,1}(B)$. As this is the last step in the loop we have that
\begin{gather*}
    B:=\operatorname{ISOLATE}_3(A)=\left[\begin{array}{c; {2pt/2pt} ccc}
    0 & 0 & 0 & 0 \\\hdashline[2pt/2pt] 
    0 & z_1 & 0 & 1\\
    0 & 0 & z_1+1 & 0\\
    0 & 1 & 0 & 1
\end{array}\right].
\end{gather*}
Notice that this matrix $B$ has constant off-diagonal entries such that $b_{33}=a_{33}=z_1+1$, $b_{3j}=b_{j3}=0$ for all $j=1,2,4$, and it is a realizer of $r=0$ since it is a symmetric matrix such that $\det B=0=0\det B_{22}$. 
\end{example}

\begin{lemma}\label{lem:the_gross_one}
Let $A = [A_{ij}]_{i,j=1,2} \in \R(\ell^2)^{m \times m}$ be a realizer of $r \in \R(\ell^2)$ with constant off-diagonal entries. Suppose the following three conditions are satisfied: {\upshape{(1)}} all diagonal entries of $A_{22}$ are not invertible; {\upshape{(2)}} there is a nonzero diagonal entry $a_{ii}$ of $A_{22}$; {\upshape{(3)}} there is an $a_{ij} \ne 0$ with $j \in \{2, \ldots, m\}, j \ne i$. Then there is an $m \times m$ realizer of $r$ with constant off-diagonal entries such that the $(2,2)$-block has an invertible diagonal entry.
\end{lemma}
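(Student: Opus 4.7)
The starting observation is that, since all off-diagonal entries of $A$ are constant, the nonzero element $a_{ij}$ guaranteed by condition~(3) lies in $\mathbb{F}\setminus\{0\}$ and is therefore invertible in $\mathcal{R}(\ell^{2})$, while $a_{ii}$ is a nonzero nilpotent ($a_{ii}^{2}=0$) by conditions~(1) and~(2). The plan is to exhibit the desired realizer $B$ by applying a short sequence of $\operatorname{CLEAN}$ and $\operatorname{ADD}$ operations to $A$; by Lemma~\ref{lem:clean_realizes}, each such operation preserves being a realizer of $r$, and each preserves constancy of off-diagonal entries because absolute values take values in $\mathbb{F}$. It therefore suffices to manufacture one invertible diagonal inside the $(2,2)$-block.

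The natural attempt is to apply $\operatorname{ADD}_{j,i,\alpha}$ (legal since $j\geq 2$). A direct calculation, entirely parallel to the one carried out in Lemma~\ref{lem:isolate_helper}, shows that this replaces the $(i,i)$-entry of $A$ by $a_{ii}+\alpha^{2}a_{jj}$, while, after $\operatorname{CLEAN}$, the off-diagonals remain constant. If $j$ indexes a row of $A_{11}$ and $a_{jj}$ happens to be invertible in $\mathcal{R}(\ell^{2})$, then taking $\alpha=1$ yields a new $(i,i)$-entry of the form (nilpotent)$+$(unit), which is a unit, and since $i$ lies in $A_{22}$ we are done. This handles the easy sub-case.

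The harder sub-case is when every $j\geq 2$ with $a_{ij}\neq 0$ has $a_{jj}$ non-invertible, which is automatic whenever condition~(3) can only be satisfied with $j$ in the $A_{22}$-block: condition~(1) then forces $a_{jj}$ to be nilpotent, and $a_{ii}+\alpha^{2}a_{jj}$ remains nilpotent for every $\alpha$. My plan here is to perform one or two preparatory ADDs that use the invertible off-diagonal $a_{ij}$ to first create an invertible diagonal entry at an \emph{auxiliary} position (typically row/column~$1$, which may be freely modified by $\operatorname{ADD}_{i,1,\alpha}$ without disturbing $A_{22}$), and then to use that newly invertible entry as the ``pivot $a_{jj}$'' in a subsequent ADD that finally manufactures an invertible diagonal inside the $(2,2)$-block. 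The existence of the combinatorial data to drive this is provided by Lemma~\ref{lem:det_inv}: since $\det A_{22}$ is invertible while all its diagonal entries are nilpotent, expanding $\det A_{22}$ as a sum over involutions forces $A_{22}$ to have even size and to carry at least one fixed-point-free involution supported entirely on nonzero (hence invertible constant) off-diagonal entries.

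The main obstacle I foresee is organizing the preparatory ADDs cleanly so that, at every step, the four invariants are maintained: symmetry, constancy of off-diagonals (automatic after $\operatorname{CLEAN}$), invertibility of $\det A_{22}$, and equality of the Schur complement with $r$. This is the bookkeeping-heavy core of the proof, where the delicate interplay between the ideal of nilpotents and the units of $\mathcal{R}(\ell^{2})$ — peculiar to characteristic $2$ and already visible in the proofs of Lemma~\ref{lem:isolate_helper} and Lemma~\ref{lem:zero_out_inv} — must be tracked carefully through each of the sub-cases determined by whether $j\in A_{11}$ or $j\in A_{22}$ and by which diagonal entries of $A$ are already units.
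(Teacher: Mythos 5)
The core of your plan cannot work, and the gap is not merely "bookkeeping." Condition~(1) of the lemma forces every diagonal entry of $A_{22}$ to have absolute value $0$. Combined with Lemma~\ref{lem:isolate_helper}'s formula, a step $\operatorname{ADD}_{j,i,\alpha}$ with legal pivot $j\in\{2,\ldots,m\}$ sends $a_{ii}\mapsto a_{ii}+\alpha^{2}a_{jj}$, whose absolute value is $|a_{ii}|+\alpha^{2}|a_{jj}|=0+\alpha^{2}\cdot 0=0$ (using Lemma~\ref{lem:abs_add_mult} and the fact that $\alpha^2\in\FF$). So no sequence of $\operatorname{CLEAN}/\operatorname{ADD}$ operations with the legal pivots $i\in\{2,\ldots,m\}$ can ever make a diagonal entry of $A_{22}$ invertible: the invertibility status of those diagonals is an invariant of the allowed moves. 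Your "easy sub-case'' never occurs (condition~(3) forces $j\geq 2$, placing $j$ in the $A_{22}$ block, so $a_{jj}$ is always non-invertible by condition~(1)), and your ``harder sub-case'' plan asks to use the first row/column as a pivot --- precisely the case excluded by Lemma~\ref{lem:clean_realizes}, because $\operatorname{ADD}_{1,j,\alpha}$ with $j\geq 2$ is not a row/column operation on $A_{22}$ and destroys the identity $\det(A)=r\det(A_{22})$. The observation via Lemma~\ref{lem:det_inv} that $A_{22}$ must have even size and admit a fixed-point-free involution supported on nonzero off-diagonals is true, but you give no mechanism for converting that combinatorial fact into a unit diagonal; as argued, within the $m\times m$ matrix there is none.

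The paper's proof escapes this invariant by first \emph{enlarging} the matrix. One passes to $B\in\R(\ell^2)^{(m+1)\times(m+1)}$ by inserting one new row and column inside the $(2,2)$-block (a new unit diagonal entry $1$, linked to the old $i$th row/column by a constant $1$), which manifestly preserves both $\det$ and $\det$ of the $(2,2)$-block. In $B$ the shifted old entry becomes $a_{ii}+1$, whose absolute value is $|a_{ii}|+1=1$, hence invertible; one applies $\operatorname{ISOLATE}$ at that position, deletes the auxiliary row and column, and lands back in $\R(\ell^2)^{m\times m}$ with a realizer of $r$ carrying a unit diagonal $a_{jj}+a_{ij}^{2}(a_{ii}+1)^{-1}$ inside the $(2,2)$-block (which is a unit because $|a_{ij}|\neq 0$). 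This temporary dimension increase is the key idea your proposal is missing.
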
 
\begin{proof}
We may assume without loss of generality that $i=2$ (since, if not, we can switch the $i$th row and column with the $2$nd ones and work with the resulting matrix instead) and begin by defining $B \in \R(\ell^2)^{(m+1) \times (m+1)}$ by
\begin{align}\label{eq:defBRealizerFromAMakeInvDiagEntry}
B =[B_{ij}]_{i,j=1,2}= \left[\begin{array}{c; {2pt/2pt} cc}
    A_{11} & 0 & A_{12} \\
    \hdashline[2pt/2pt]
    0 & 1 & \begin{array}{cccc}
        \hspace{0.05in}1 & \hspace{0.25in}0 & \hspace{0.1in}\dots & \hspace{0.1in}0
    \end{array}\\
    A_{21} & \begin{array}{c}
        1\\
        0\\
        \vdots\\
        0
    \end{array} & \begin{array}{cccc}
        a_{22} + 1 & a_{23} & \dots & a_{2m} \\
        a_{32} & a_{33} & \dots & a_{3m} \\
        \vdots & \vdots & \ddots & \vdots\\
        a_{m2} & a_{m3} & \dots & a_{mm}
    \end{array}
\end{array}\right].
\end{align}
We claim that $B$ is a realizer of $r$ with constant off-diagonal entries such that the entry $b_{33}$ is invertible. Clearly, $B$ is symmetric with constant off-diagonal entries. Second, by adding the $2$nd row to the $3$rd row and then adding the $2$nd column to the $3$rd column of $B$ and similarly to $B_{22}$, we see that $\det{A} = \det{B}$ and $\det{A_{22}} = \det{B_{22}}$. Hence $B$ is a realizer of $r$. Third, because $a_{22}$ is not invertible, Lemma \ref{lem:abs_add_mult} implies that $a_{22}^2=0= 0^2$, so that $|a_{22}| = 0$. As $b_{33}=a_{22}+1$, it follows from Lemma \ref{lem:abs_add_mult} that $|b_{33}| = |a_{22}| + 1 = 1$. Hence, $b_{33}$ is invertible. Since the hypotheses of Lemma \ref{lem:zero_out_inv} are now satisfied for $B$ (with invertible diagonal entry $b_{33}$), we have that 
\begin{gather*}
    C = [C_{ij}]_{i,j=1,2} = \operatorname{ISOLATE}_3(B) \in \R(\ell^2)^{(m+1) \times (m+1)}
\end{gather*} 
is a realization of $r$ and $c_{3k} =c_{k3}= 0$ for all $k \in \{1, \ldots, m+1\}$ such that $k \ne 3$. 

Now let $D = [D_{ij}]_{i,j=1,2} \in \R(\ell^2)^{m \times m}$ be the result of removing the $3$rd row and $3$rd column from $C$. Clearly $D$ is symmetric (as $C$ is), $\det{C} = c_{33}\det{D}$ (expanding about the $3$rd row of $C$), and $\det{C_{22}} = c_{33}\det{D_{22}}$ (expanding about the $2$nd row of $C_{22}$). Thus, $r=(\det{C})(\det{C_{22}})^{-1}=(\det{D})(\det{D_{22}})^{-1}$ so that $D$ is also a realizer of $r$.

By hypotheses, there exists $j \in \{3, \ldots, m\}$ such that $a_{2j} \ne 0$. We now show that $d_{jj}$ is invertible. As $B$ is a symmetric matrix with constant off-diagonal entries such that $b_{33}$ is invertible, we conclude from Lemma \ref{lem:isolate_helper} and the definitions of $C$ and $D$ that \begin{gather*}
    d_{jj}=c_{j+1,j+1} = b_{j+1,j+1} + b_{3,j+1}^2b_{33}^{-1}= a_{jj} + a_{2j}^2(a_{22} + 1)^{-1}.
\end{gather*}
Using the hypotheses that $a_{2j} \ne 0$ and $a_{jj}$ is not invertible, we find that
\begin{align*}
    |d_{jj}| &= |a_{jj} + a_{2j}^2(a_{22} + 1)^{-1}|
    = |a_{jj}| + a_{2j}^2|(a_{22} + 1)^{-1}|\\
    &= 0 + a_{2j}^2|((a_{22} + 1)^{-1})^2(a_{22} + 1)|
    = a_{2j}^2|((a_{22} + 1)^2)^{-1}||a_{22} + 1|\\
    &= a_{2j}^2(|a_{22} + 1|^2)^{-1}|a_{22} + 1|
    = a_{2j}^2|a_{22} + 1|^{-1}= a_{2j}^2 \ne 0,
\end{align*}
and so $d_{jj}$ is invertible by Lemma \ref{lem:abs_add_mult}. Therefore, $D$ is a realizer of $r$ with an invertible diagonal entry $d_{jj}$.
\end{proof}

\begin{lemma}\label{lem:realize_reduce_dim}
If $A = [A_{ij}]_{i,j=1,2} \in \R(\ell^2)^{m \times m}$ is a realizer of $r \in \R(\ell^2)$ with constant off-diagonal entries such that $A_{22}$ has a nonzero diagonal entry and $m \ge 3$, then there exists an $(m-1) \times (m-1)$ matrix that is a realizer of $r$.
\end{lemma}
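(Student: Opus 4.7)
The plan is to split the argument by whether or not $A_{22}$ already contains an invertible diagonal entry, and in the favourable case reduce the matrix by one by first applying $\operatorname{ISOLATE}_i$ and then deleting the distinguished row and column.

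Concretely, if $A_{22}$ has an invertible diagonal entry $a_{ii}$ (so $i$ is an index of $A_{22}$), I would set $B=[B_{ij}]_{i,j=1,2}=\operatorname{ISOLATE}_i(A)$. By Lemma \ref{lem:zero_out_inv}, $B$ is a realizer of $r$ with constant off-diagonal entries, $b_{ii}=a_{ii}$ invertible, and $b_{ij}=b_{ji}=0$ for every $j\ne i$. Let $D=[D_{ij}]_{i,j=1,2}$ be the $(m-1)\times(m-1)$ matrix obtained from $B$ by deleting row and column $i$, with the block partition inherited from $B$ (so $D_{11}=B_{11}$ and $D_{22}$ is the corresponding deletion from $B_{22}$). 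Expanding along row $i$ gives $\det(B)=b_{ii}\det(D)$ and $\det(B_{22})=b_{ii}\det(D_{22})$, so $\det(D_{22})=b_{ii}^{-1}\det(B_{22})$ is invertible and
\[
\det(D)=b_{ii}^{-1}\det(B)=b_{ii}^{-1}\,r\,\det(B_{22})=r\det(D_{22}).
\]
Linearity and symmetry of the entries of $D$ are inherited from $B$ (the $\operatorname{ISOLATE}_i$ formulas of Lemma \ref{lem:isolate_helper} preserve linearity because all off-diagonal entries are constants in $\mathbb{F}$ and $a_{ii}^{-1}=|a_{ii}|^{-2}a_{ii}$ is a constant multiple of $a_{ii}$). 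Thus $D$ is an $(m-1)\times(m-1)$ realizer of $r$ in the sense of Definition \ref{real}.

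In the remaining case $A_{22}$ has no invertible diagonal entry but has, by hypothesis, a nonzero diagonal entry $a_{ii}$ (necessarily not invertible). I would verify that condition (3) of Lemma \ref{lem:the_gross_one} is automatic: if every off-diagonal entry of $A_{22}$ in row $i$ were zero, cofactor expansion along that row would give $\det(A_{22})=a_{ii}M$ for some $M\in\R(\ell^2)$; since $\R(\ell^2)$ is commutative, invertibility of $\det(A_{22})$ would force $a_{ii}$ to be invertible, a contradiction. All three hypotheses of Lemma \ref{lem:the_gross_one} are therefore met (the requirement $m\ge 3$ is what guarantees that $\{2,\ldots,m\}\setminus\{i\}$ is non-empty), so the lemma supplies an $m\times m$ realizer of $r$ with constant off-diagonal entries whose $(2,2)$-block now has an invertible diagonal entry. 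Applying the first case to this new realizer yields the desired $(m-1)\times(m-1)$ realizer.

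The main point to be careful about is that every transformation actually preserves the data of Definition \ref{real}: symmetry, linearity of the entries, invertibility of $\det(A_{22})$, and the identity $\det(A)=r\det(A_{22})$. Symmetry and invertibility of the determinants were already handled in Lemmas \ref{lem:clean_realizes} and \ref{lem:the_gross_one}; the genuinely new verification is that after $\operatorname{ISOLATE}_i$ and deletion we still get an element in $r\det(D_{22})$, which is exactly the cofactor computation above. I do not expect a serious obstacle beyond the bookkeeping, since the heavy lifting (keeping the entries linear during $\operatorname{ISOLATE}$ and producing an invertible diagonal entry when none exists) has already been encapsulated in Lemmas \ref{lem:isolate_helper}, \ref{lem:zero_out_inv}, and \ref{lem:the_gross_one}.
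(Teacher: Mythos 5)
Your proof is correct and follows essentially the same approach as the paper's: split on whether $A_{22}$ already has an invertible diagonal entry, in that case apply $\operatorname{ISOLATE}$ followed by deletion of the distinguished row and column, and otherwise invoke Lemma~\ref{lem:the_gross_one} to reduce to the first case. Your write-up is slightly more explicit than the paper's (naming $B$ and $D$, writing out the cofactor identity, and checking invertibility of $\det(D_{22})$ directly), but the structure and the key cited lemmas are the same.
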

\begin{proof}
We break the proof into two cases depending on whether or not $A_{22}$ has an invertible diagonal entry. 

First, assume $A_{22}$ has an invertible diagonal entry $a_{kk}$. Then, by Lemma \ref{lem:zero_out_inv}, we can assume without loss of generality that $a_{kj} = a_{jk} = 0$ for all $j \ne k$. Let $A'$ and $(A_{22})'$ be the submatrices formed from $A$ and $A_{22}$ by deleting the row and column containing the entry $a_{kk}$. It follows that we will have the partitioning $A'=[(A')_{ij}]_{i,j=1,2}$ with $(A')_{22}=(A_{22})'$ and
\begin{gather*}
    r=\det A/\det{A_{22}}=a_{kk}\det A'/\det{A_{22}}=\det A'/\det{(A')_{22}}.
\end{gather*}
Hence $A'=[(A')_{ij}]_{i,j=1,2}\in \R(\ell^2)^{(m-1) \times (m-1)}$ is a realizer of $r$ as desired.

Now assume that $A_{22}$ does not have an invertible diagonal entry, and let $a_{kk}$ be any such entry. Then there must be some $j \in \{2, \ldots, m\}$, with $j \ne k$, such that $a_{kj}$ is nonzero; otherwise, the product $a_{kk}\det{A_{22}'}=\det{A_{22}}$ would not be invertible in the ring $\R(\ell^2)$, contradicting that $A_{22}$ is invertible. Thus, by Lemma \ref{lem:the_gross_one}, there is a realizer of $r$ in $\R(\ell^2)^{m \times m}$ with constant off-diagonal entries such that the $(2,2)$-block has an invertible diagonal entry. Thus by the first part of the proof there exists an $(m-1) \times (m-1)$ matrix which is a realizer of $r$.
\end{proof}

We are now ready to prove the main result of this section.
\begin{proof}[Proof of Theorem \ref{thm:ring_realizable_linear}]
Let $A = [A_{ij}]_{i,j=1,2} \in \R(\ell^2)^{m \times m}$ be a realizer of $r$. Then, by Lemma \ref{lem:clean_realizes}, we may assume without loss of generality that $A$ has constant off-diagonal entries. 
Our proof will proceed by induction on the dimension $m$ of $A$.

Consider the base case $m = 2$. Then
\begin{equation*}
A = \left[\begin{array}{c; {2pt/2pt} c }
    A_{11} & A_{12} \\\hdashline[2pt/2pt]
    A_{21} & A_{22}
\end{array}\right]
= \left[\begin{array}{c; {2pt/2pt} c }
    L_1 & c \\\hdashline[2pt/2pt]
    c & L_2
\end{array}\right],
\end{equation*}
where $L_1, L_2 \in \R(\ell^2)$ are linear and $c \in \FF$. It follows that $\det{A} = L_1L_2 + c^2$ and $\det{A_{22}} = L_2$. As $A = [A_{ij}]_{i,j=1,2}$ is a realizer of $r$, we must have that $r = (\det{A})(\det{A_{22}})^{-1} = L_1 + c^2L_2^{-1}$. By Lemma \ref{lem:abs_add_mult}, $L_2^{-1}$ is proportional to $L_2$ and is thus linear. This implies that $r$ is also linear.

Now assume that any realizable $s \in \R(\ell^2)$ with a realizer in $\R(\ell^2)^{m \times m}$ is linear. We will prove this statement is also true for $m+1$. Let $A = [A_{ij}]_{i,j=1,2} \in \R(\ell^2)^{(m+1) \times (m+1)}$ be a realizer of $r$ with constant off-diagonal entries. 
By Lemma \ref{lem:realize_reduce_dim} and our induction hypothesis,
$A_{22}$ having a nonzero diagonal entry implies that $r$ is linear. Hence, we assume that all diagonal entries in $A_{22}$ are zero. In this case $A_{22} \in \FF^{m \times m}$ so that $\det{A_{22}} \in \FF$. Since $\det{A}$ is linear, this implies that $r=\frac{\det{A}}{\det A_{22}}$ is also linear. This completes the proof of the theorem.
\end{proof}

Here we give examples to help illustrate the procedure in the proof above.
\begin{example}
Let $\ell=(1,1)\in\mathbb{F}^2$, so that $\ell^2=(1^2,1^2)=(1,1)$, and consider $r=z_1\in \mathcal{R}(\ell^2)$. Following Def.\ \ref{real}, an example of a realizer of $r$ is the symmetric matrix
$$A=[A_{ij}]_{i,j=1,2} = \left[\begin{array}{c; {2pt/2pt} cc}
    z_1 &  z_2 &  z_1 \\\hdashline[2pt/2pt] 
    z_2 & 0 &  z_1\\
     z_1 &  z_1 & 0
\end{array}\right]\in \mathcal{R}(\ell^2)^{3\times 3},$$ since all the entries of $A$ are linear, $\det(A_{22}) =  z_1^2=1$ is invertible in the ring $\mathcal{R}(\ell^2)$, and computing $\det(A)$ by expanding along the second column of $A$ we find that 
\begin{align*}
\det(A) &= z_2\det\left[\begin{array}{cc}
    z_2 &  z_1 \\
     z_1 & 0
\end{array}\right] +  z_1\det\left[\begin{array}{cc}
     z_1 &  z_1 \\
    z_2 &  z_1
\end{array}\right]=  z_1^3 = z_1\det(A_{22}).
\end{align*}
Now although $A$ doesn't have constant off-diagonal entries, by applying CLEAN (\ref{Def:clean1}),(\ref{Def:clean2}) to it {\upshape{[}}using the fact that $|z_i|=1$ in this ring $\mathcal{R}(\ell^2)$ by Def.\ \ref{def:abs_val}) since $z_i^2=1^2${\upshape{]}}
\[
B=\operatorname{CLEAN}(A)=\left[\begin{array}{c; {2pt/2pt} cc}
     z_1 & 1 & 1 \\\hdashline[2pt/2pt] 
    1 & 0 & 1\\
    1 & 1 & 0
\end{array}\right],
\]
we get a matrix that does and, by Lemma \ref{lem:clean_realizes}, it must be a realizer of $r$. This matrix also has the property that all the diagonal entries of $B_{22}$ are zero. Hence $B_{22}\in \mathbb{F}^{2\times 2}$ so that $\det B_{22}\in \mathbb{F}$ and $r=\frac{\det B}{\det B_{22}}.$ Indeed, this can be checked by a direct calculation in this example: $\det(B_{22}) = 1$ and
\[
\frac{\det B}{\det B_{22}} = \det\left[\begin{array}{cc}
    1 & 1 \\
    1 & 0
\end{array}\right] + \det\left[\begin{array}{cc}
     z_1 & 1 \\
    1 & 1
\end{array}\right] = 1 +  z_1 + 1 =  z_1.
\]
\end{example}

\begin{example}
    Let $\ell=(0,0)\in\mathbb{F}^2$, so that $\ell^2=(0^2,0^2)=(0,0)$, and consider $r=0\in \mathcal{R}(\ell^2)$. Following Def.\ \ref{real}, an example of a realizer of $r$ is the symmetric matrix
    \[
        A = \left[\begin{array}{c; {2pt/2pt} cc}
            0 & z_2 & 0 \\\hdashline[2pt/2pt] 
            z_2 & z_1 & z_1+1\\
            0 & z_1+1 & z_1
        \end{array}\right] \in \mathcal{R}(\ell^2)^{3\times 3},
\]
since all the entries of $A$ are linear, $\det(A_{22})  = z_1^2 + (z_1+1)^2 = 1$ is invertible in the ring $\mathcal{R}(\ell^2)$, and computing $\det(A)$ by expanding along the first row of $A$ we find that
$$\det(A)= z_1z_2^2=z_10^2=0\det(A_{22}).$$
Now although $A$ doesn't have constant off-diagonal entries, by applying CLEAN {\upshape{(\ref{Def:clean1})}},{\upshape{(\ref{Def:clean2})}} to it {\upshape{[}}using the fact that $|z_i|=0$ in this ring $\mathcal{R}(\ell^2)$ by Def.\ \ref{def:abs_val} since $z_i^2=0^2${\upshape{]}}
\[
\operatorname{CLEAN}(A) = \left[\begin{array}{c; {2pt/2pt} cc}
    0 & 0 & 0 \\\hdashline[2pt/2pt] 
    0 & z_1 & 1\\
    0 & 1 & z_1
\end{array}\right]=:A
\]
we get a matrix that does and, by Lemma \ref{lem:clean_realizes}, it must be a realizer of $r$. Notice that this new matrix $A$ also has constant off-diagonal entries such that the $(2,2)$-block of the matrix has: {\upshape{(1)}} all it's diagonal entry not invertible; {\upshape{(2)}} has a diagonal entry $a_{22}=z_1\not=0$; {\upshape{(3)}} $a_{23}=1\not=0$. Hence, following the proof of Theorem \ref{thm:ring_realizable_linear}, we appeal to construction in the proof of Lemma \ref{lem:the_gross_one}. First, from $A$ we define, as in \eqref{eq:defBRealizerFromAMakeInvDiagEntry}, the matrix
\[
B:=\left[\begin{array}{c; {2pt/2pt} ccc}
    0 & 0 & 0 & 0 \\\hdashline[2pt/2pt] 
    0 & 1 & 1 & 0\\
    0 & 1 & z_1+1 & 1\\
    0 & 0 & 1 & z_1
\end{array}\right].
\]
Next, we apply the $\operatorname{ISOLATE}_3$ algorithm to this matrix $B$ {\upshape{(}}see Example \ref{ex:Lemmazero_out_inv}{\upshape{)}} and define
\begin{gather*}
C:=\operatorname{ISOLATE}_3(B)=\left[\begin{array}{c; {2pt/2pt} ccc}
    0 & 0 & 0 & 0 \\\hdashline[2pt/2pt] 
    0 & z_1 & 0 & 1\\
    0 & 0 & z_1+1 & 0\\
    0 & 1 & 0 & 1
\end{array}\right].
\end{gather*}
Then, from this matrix $C$, we remove the third row and column to arrive at the matrix
\begin{gather*}
    D:=\left[\begin{array}{c; {2pt/2pt} cc}
    0 & 0 & 0 \\\hdashline[2pt/2pt] 
    0 & z_1 & 1\\
    0 & 1 & 1
\end{array}\right].
\end{gather*}
According to the proof of Lemma \ref{lem:the_gross_one}, this matrix $D$ is a $3\times 3$ realizer of $r=0$, with constant off-diagonal entries, that has $d_{33}=1$ as an invertible diagonal entry of its $(2,2)$-block. Now proceeding as in the proof of Theorem \ref{thm:ring_realizable_linear} we now appeal to the first part of the proof of Lemma \ref{lem:realize_reduce_dim} and apply Lemma \ref{lem:zero_out_inv} to get a new matrix
\begin{gather*}
    A:=\operatorname{ISOLATE}_3(D)=\left[\begin{array}{c; {2pt/2pt} cc}
    0 & 0 & 0 \\\hdashline[2pt/2pt] 
    0 & z_1+1 & 0\\
    0 & 0 & 1
\end{array}\right]
\end{gather*}
and then form a matrix $A'$ from $A$ by deleting its third row and column, i.e.,
\[
    A':=\left[\begin{array}{c; {2pt/2pt} c}
    0 & 0\\\hdashline[2pt/2pt] 
    0 & z_1+1
\end{array}\right].
\]
The matrix $A'$ is a realizer of $r=0$, thus arriving at the base case $m=2$ in the proof of Theorem \ref{thm:ring_realizable_linear} for this particular example.
\end{example}

\subsection{Justifying Examples \ref{thm:NoSymBessRealzForSimpleProd}-\ref{thm:NoSymBessRealzForExOfNoSDRRepr}}\label{sec:justifyingExamples}
To prove the statements in Examples \ref{thm:NoSymBessRealzForSimpleProd}-\ref{thm:NoSymBessRealzForExOfNoSDRRepr} we will need the following lemma.
\begin{lemma}\label{lemma2}
Let $p, q \in \mathbb{F}[z]$ with $q\not=0$. If $pq$ is a multilinear polynomial and $\left[\frac{p}{q}\right]\in \mathbb{F}(z)^{1\times 1}$ has a symmetric Bessmertny\u{\i} realization, then $pq$ is linear.
\end{lemma}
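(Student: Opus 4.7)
The plan is to project the given symmetric Bessmertny\u{\i} realization into the quotient ring $\mathcal{R}(\ell^2)$ for a carefully chosen $\ell \in \mathbb{F}^n$, apply Theorem \ref{thm:ring_realizable_linear} there, and lift the resulting linearity back to $\mathbb{F}[z]$ by exploiting that $pq$ is multilinear. Concretely, if $[p/q] = A(z)/A_{22}(z)$ for some symmetric linear pencil $A = [A_{ij}]_{i,j=1,2} \in \mathbb{F}[z]^{m\times m}$ with $A_{22}$ invertible over $\mathbb{F}(z)$, then the Schur determinant formula (Lemma \ref{LemSchurDetFormula}) yields the polynomial identity $q\det A = p\det A_{22}$ in $\mathbb{F}[z]$. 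Since Lemma \ref{lem:infinite_invertible_projection} requires $\mathbb{F}$ infinite, I will first reduce to that case by extending $\mathbb{F}$ to any infinite field $\mathbb{K}$ of characteristic two (for instance $\mathbb{K}=\overline{\mathbb{F}}$): the hypothesis that $pq$ is multilinear and the conclusion that $pq$ is linear are statements purely about the coefficients of $pq$, so we may restrict and extend freely, and any SBR over $\mathbb{F}$ is automatically one over $\mathbb{K}$. Having made this reduction, I apply Lemma \ref{lem:infinite_invertible_projection} to the nonzero polynomial $q\cdot\det A_{22}$ to obtain $\ell \in \mathbb{F}^n$ for which both $\pi_{\ell^2}(q)$ and $\pi_{\ell^2}(\det A_{22}) = \det\pi_{\ell^2}(A_{22})$ (using Lemma \ref{lem:MultFuncsBasicProperties}) are invertible in $\mathcal{R}(\ell^2)$.

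With such an $\ell$ fixed, I apply $\pi_{\ell^2}$ entrywise to $A$; since $\pi_{\ell^2}$ is an algebra homomorphism sending linear polynomials to linear ring elements, $\pi_{\ell^2}(A) \in \mathcal{R}(\ell^2)^{m\times m}$ is symmetric with all entries linear and with $(2,2)$-block of invertible determinant. Projecting the identity $q\det A = p\det A_{22}$ and multiplying through by $\pi_{\ell^2}(q)^{-1}$ gives $\det\pi_{\ell^2}(A) = r\,\det\pi_{\ell^2}(A_{22})$, where $r := \pi_{\ell^2}(p)\pi_{\ell^2}(q)^{-1} \in \mathcal{R}(\ell^2)$. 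Thus $\pi_{\ell^2}(A)$ is a realizer of $r$ in the sense of Def.\ \ref{real}, and Theorem \ref{thm:ring_realizable_linear} forces $r$ to be linear in $\mathcal{R}(\ell^2)$.

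To convert linearity of $r$ into information about $pq$, I invoke Lemma \ref{lem:abs_add_mult} to write $\pi_{\ell^2}(q)^{-1} = c^{-2}\pi_{\ell^2}(q)$ with $c := |\pi_{\ell^2}(q)| \in \mathbb{F}$ and $c\neq 0$, so that $r = c^{-2}\pi_{\ell^2}(pq)$ and hence $\pi_{\ell^2}(pq) = c^2 r$ is linear in $\mathcal{R}(\ell^2)$; this says $pq - L \in \mathcal{I}(\ell^2)$ for some linear $L \in \mathbb{F}[z]$. Here is the lift-back step that I view as the crux of the argument: because $pq$ is multilinear by hypothesis and $L$ is linear (hence multilinear), the difference $pq - L$ is multilinear and simultaneously lies in $\mathcal{I}(\ell^2)$; but Lemma \ref{lem:MultFuncsBasicProperties} tells us that $\operatorname{MULT}_{\ell^2}$ is the projection on $\mathbb{F}[z]$ with range the multilinear polynomials and nullspace $\mathcal{I}(\ell^2)$, and the intersection of the range and nullspace of any projection is $\{0\}$, so $pq - L = 0$ and $pq = L$ is linear. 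The main obstacle is engineering a single $\ell$ for which Def.\ \ref{real} literally applies to $\pi_{\ell^2}(A)$, requiring invertibility of both $\pi_{\ell^2}(q)$ and $\pi_{\ell^2}(\det A_{22})$ at once; the infinite-field reduction together with Lemma \ref{lem:infinite_invertible_projection} applied to the product $q\cdot\det A_{22}$ is exactly what delivers this.
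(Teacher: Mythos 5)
Your proof is correct and follows essentially the same route as the paper: extend to an infinite field, invoke Lemma \ref{lem:infinite_invertible_projection} on $q\det A_{22}$, project $A$ into $\mathcal{R}(\ell^2)$, apply Theorem \ref{thm:ring_realizable_linear} to conclude $\pi_{\ell^2}(p)\pi_{\ell^2}(q)^{-1}$ is linear, convert this to linearity of $\pi_{\ell^2}(pq)$ via Lemma \ref{lem:abs_add_mult}, and lift back using multilinearity of $pq$ and the projection structure of $\operatorname{MULT}_{\ell^2}$ from Lemma \ref{lem:MultFuncsBasicProperties}. One small slip: after passing to the infinite extension $\mathbb{K}$, the tuple $\ell$ and the constant $c=|\pi_{\ell^2}(q)|$ live in $\mathbb{K}^n$ and $\mathbb{K}$ respectively, not $\mathbb{F}^n$ and $\mathbb{F}$ as you wrote, though this does not affect the argument since the final identity $pq = L$ with $L$ multilinear and $pq\in\mathbb{F}[z]$ forces $L\in\mathbb{F}[z]$.
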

\begin{proof}
Let $\mathbb{G}$ be a field extension of $\mathbb{F}$ that is infinite,\footnote{An example is $\mathbb{G}=\mathbb{F}(z_{n+1}),$ where $z_1,\ldots, z_n, z_{n+1}$ are $n+1$ indeterminates.\label{footn:InfFieldExt}} and set $A = [A_{ij}]_{i,j=1,2} \in \mathbb{F}[z]^{m \times m}$ to be a symmetric Bessmertny\u{\i} realizer of $\left[\frac{p}{q}\right]$. Because $\mathbb{F} \subseteq \mathbb{G}$, we also have that $A=[A_{ij}]_{i,j=1,2} \in \mathbb{G}[z]^{m \times m}$ is also a symmetric Bessmertny\u{\i} realizer of $\left[\frac{p}{q}\right]\in \mathbb{G}(z)$. As $\mathbb{G}$ is infinite and $q \det A_{22}\in \mathbb{G}[z]$ is nonzero, there exists an $\ell\in \mathbb{G}^n$ so that $\pi_{\ell^2}(q \det A_{22})$ is invertible in $\mathcal{R}(\ell^2)$. Since $\pi_{\ell^2}(q \det A_{22})=\pi_{\ell^2}(q)\pi_{\ell^2}(\det A_{22})$, both $\pi_{\ell^2}(q)$ and $\pi_{\ell^2}(\det A_{22})$ are invertible elements in $\mathcal{R}(\ell^2)$. Using Lemma \ref{LemSchurDetFormula} with the field $\mathbb{G}(z)$, Lemma \ref{lem:MultFuncsBasicProperties}, and equation \eqref{eq:pi_matrix}, we find  
\[
\pi_{\ell^2}(q)\det[\pi_{\ell^2}(A)] = \pi_{\ell^2}(q\det A)=\pi_{\ell^2}(p\det A_{22}) = \pi_{\ell^2}(p)\det[\pi_{\ell^2}(A)_{22}].
\] 
Since $\pi_{\ell^2}(A)$ is a realizer of $\pi_{\ell^2}(p)\pi_{\ell^2}(q)^{-1}$ (see Definition \ref{real}), we have that 
$\pi_{\ell^2}(p)\pi_{\ell^2}(q)^{-1}$ is linear by Theorem \ref{thm:ring_realizable_linear}. By Lemma \ref{lem:abs_add_mult}, there exists a nonzero constant $c\in \mathbb{G}$ such that $\pi_{\ell^2}(q)^{-1}=c\pi_{\ell^2}(q)$. Hence $\pi_{\ell^2}(p)\pi_{\ell^2}(q)^{-1}=c\pi_{\ell^2}(p)\pi_{\ell^2}(q)=c\pi_{\ell^2}(pq)$, and this implies $\pi_{\ell^2}(pq)$ is linear. It follows from this, by Lemma \ref{lem:MultFuncsBasicProperties} and the hypothesis that $pq$ is multilinear, that $pq$ is linear.
\end{proof}

\begin{proof}[Proof of Example 1.X]
    The common theme in the justification of Examples \ref{thm:NoSymBessRealzForSimpleProd}-\ref{thm:NoSymBessRealzForExOfNoSDRRepr} is that we have a product of polynomials $pq$ with $p\not=0$, that is multilinear, but not linear and so by Lemma \ref{lemma2}, the matrix $[p/q]\in \mathbb{F}(z)^{1\times 1}$ cannot have a symmetric Bessmertny\u{\i} realization. For Example \ref{thm:NoSymBessRealzForSimpleProd}, $p = z_1z_2$ and $q = 1$; for Example \ref{thm:NoHomoSymBessRealzForSimpleProd}, $p = z_1z_2$ and $q = z_3$; for Example \ref{thm:NoSymBessRealzForExOfNoSDRRepr}, $p = z_1z_2+z_3$ and $q = 1$.
\end{proof}

\section{Jordan algebra of SBRs}\label{sec:JordanAlgebraOfSBRs}

In this section, $\mathbb{F}$ will denote a field with characteristic two, i.e., $\operatorname{char}(\mathbb{F})=2$. We assume the reader is familiar with basic aspects of the theory of quadratic Jordan algebras (see \cite{66KM, 69NJ, 75OL, 81NJ}), but see \cite{68NJ, 78KM, 10KM} for a good introduction. 

This theory arises naturally, e.g., in studying the structure of certain subsets of symmetric elements of an associative ring with a given involution \cite{76IH, 98MK} (also see, for instance, \cite{69KM, 88MZ, 19GMM}), especially when the ring has characteristic $2$. For our purposes, we only need to introduce the notion of an ``ample" subspace in a Jordan algebra following the exposition in \cite[Section 0]{88MZ} (cf.\ \cite{99KM, 00AC}) for which the involution is the transpose $T$.

Consider the associative algebra $\mathcal{A}=\mathbb{F}(z)^{k\times k}$ over the field $\mathbb{F}$. As is well-known, since it is an associative algebra, we can form a unital quadratic Jordan algebra $\mathcal{A}^+$ via
\begin{gather*}
    \mathcal{A}^+:U_xy=xyx,\;x^2=xx,\;\{xyz\}=xyz+zyx,\;x\circ y=xy+yx.
\end{gather*}
An important example of a Jordan subalgebra of $\mathcal{A}^+$ (and hence by definition a special Jordan algebra) is the symmetric elements with transpose involution (also known as the Hermitian algebra of self-adjoint elements in the associative algebra $\mathcal{A}$ with involution $T$):
\begin{gather*}
    H(\mathcal{A},T)=\{x\in \mathcal{A}:x^T=x\}\subseteq \mathcal{A}^+.
\end{gather*}
We recall the following definition (more specifically, it is called an ample Hermitian algebra or ample Hermitian subspace, and it is also a Jordan algebra).
\begin{Def}
    A set $H_0(\mathcal{A},T)$ is called an ample subspace of $H(\mathcal{A},T)$ if $H_0(\mathcal{A},T)$ is a subspace of $H(\mathcal{A},T)$ that contains all ``traces" $x+x^T,$ all ``norms" $xx^T$, and $xH_0(\mathcal{A},T)x^T\subseteq H_0(\mathcal{A},T)$ for all elements $x\in \mathcal{A}$.
\end{Def}

The following is the main result of this section. As we shall see, it's proof has much in common with the proof of Theorem \ref{thm:BessmertnyiRealizabilityThm}.(c) when $n=1$.
\begin{thm}\label{thm:amplesubspace}
    The associative algebra $\mathcal{A}=\mathbb{F}(z)^{k\times k}$ over the field $\mathbb{F}$ with the transpose $T$ is an involution that has an ample subspace $H_0(\mathcal{A},T)$ of $H(\mathcal{A},T)$, where 
    \begin{gather*}
        H(\mathcal{A},T)=\{F(z)\in \mathbb{F}(z)^{k\times k}:F(z)^T=F(z)\},\\
        H_0(\mathcal{A},T)=\{F(z)\in \mathbb{F}(z)^{k\times k}:F(z) \text{ has an SBR}\}.
    \end{gather*}
Moreover, $H_0(\mathcal{A},T)=H(\mathcal{A},T)$ if $n=1$, and $H_0(\mathcal{A},T)\not= H(\mathcal{A},T)$ if $n\geq 2$.
\end{thm}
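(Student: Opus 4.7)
Since transpose is an involution on $\mathcal{A}=\mathbb{F}(z)^{k\times k}$, my task is to verify the four closure properties defining an ample subspace and then to settle the comparison with $H$. Three of the closure properties are direct consequences of the Schur-complement calculus of Sec.~\ref{sec:Thm3}: first, $H_0\subseteq H$ because $F=A/A_{22}$ with $A$ sLP yields $F^{T}=A^{T}/A_{22}^{T}=A/A_{22}=F$; second, $H_0$ is a subspace by Lemmas~\ref{LemScalarMultiSchurCompl} and~\ref{LemSumSchurComp}, together with the trivial sLP realizer $\operatorname{diag}(0_k,I_k)$ for $0$; third, every trace $y+y^{T}$ lies in $H_0$ by Lemmas~\ref{LemMatrixRationalsAreBessRealizable} and~\ref{LemSymmetrizationSchurComplement}; and every norm $yy^{T}$ lies in $H_0$ by taking a BR $A$ of $y$ from Lemma~\ref{LemMatrixRationalsAreBessRealizable} and applying Lemma~\ref{LemMatrixMultipSchurComplements} with $B=A^{T}$ and $X=I$ (both sLP), so that the output sLP $C$ satisfies $C/C_{22}=y I^{-1} y^{T}=yy^{T}$.

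The substantive step is closure under $H\mapsto xHx^{T}$ for arbitrary $x\in\mathcal{A}$. I first treat the case where $H\in H_0$ is invertible. Then Lemma~\ref{LemInvOfASchurCompl} provides an sLP $T=[T_{ij}]_{i,j=1,2}$ with $T/T_{22}=H^{-1}$, and Lemma~\ref{LemMatrixRationalsAreBessRealizable} provides an LP $A$ with $A/A_{22}=x$. My idea is to run the symmetric specialization ($B=A^{T}$, $X=X^{T}$) of the construction in Lemma~\ref{LemMatrixMultipSchurComplements} for $(A/A_{22})X^{-1}(A/A_{22})^{T}$, but to permit $X$ itself to be the Schur complement of an sLP rather than a single LP block. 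Concretely, I form
\[
\tilde C=\left[\begin{array}{ccccc}
0 & A_{12} & 0 & A_{11} & 0 \\
A_{12}^{T} & 0 & A_{22}^{T} & 0 & 0 \\
0 & A_{22} & 0 & A_{21} & 0 \\
A_{11}^{T} & 0 & A_{21}^{T} & -T_{11} & -T_{12} \\
0 & 0 & 0 & -T_{21} & -T_{22}
\end{array}\right]
\]
from the pencil of Lemma~\ref{LemMatrixMultipSchurComplements} by expanding the scalar block $-X$ into the full sLP $-T$. Symmetry of $\tilde C$ follows from $T^{T}=T$ together with the $B=A^{T}$ pairing, and all of its entries are affine linear in $z$, so $\tilde C$ is sLP. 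Computing $\tilde C/\tilde C_{22}$ by the Crabtree--Haynsworth quotient identity — first contracting on the corner block $-T_{22}$, which converts the $(4,4)$-entry to $T/T_{22}=H^{-1}$ (signs disappear since $\operatorname{char}\mathbb{F}=2$), and then invoking Lemma~\ref{LemMatrixMultipSchurComplements} on the resulting matrix — yields $\tilde C/\tilde C_{22}=x(T/T_{22})^{-1}x^{T}=xHx^{T}$, so $xHx^{T}\in H_0$.

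For a possibly singular $H\in H_0$, I reduce to the invertible case by the characteristic-$2$ identity
\[
xHx^{T}=x(H+z_{1}I)x^{T}+z_{1}\,xx^{T}.
\]
The matrix $H+z_{1}I$ lies in $H_0$ by the sum closure already established (since $z_{1}I$ has the sLP realizer $\operatorname{diag}(z_{1}I,I)$) and is invertible in $\mathbb{F}(z)^{k\times k}$ because $\det(H+z_{1}I)$ is a monic polynomial of degree $k$ in $z_{1}$ with coefficients in $\mathbb{F}(z_{2},\dots,z_{n})$, hence nonzero. Applying the invertible case to $H+z_{1}I$ gives $x(H+z_{1}I)x^{T}\in H_0$, and applying it again to the invertible element $z_{1}I\in H_0$ gives $z_{1}xx^{T}=x(z_{1}I)x^{T}\in H_0$; the sum closure then delivers $xHx^{T}\in H_0$. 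Finally, the equality $H_0=H$ for $n=1$ is part~(c)(i) of Theorem~\ref{thm:BessmertnyiRealizabilityThm}, and for $n\ge 2$ the strict inclusion $H_0\subsetneq H$ is witnessed by the symmetric $[z_{1}z_{2}]$ of Example~\ref{thm:NoSymBessRealzForSimpleProd}.

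The main obstacle I anticipate is executing the two-stage Schur-complement computation in the invertible step cleanly: one must verify that the enlarged pencil $\tilde C$ is indeed sLP (straightforward from $T^{T}=T$ and $B=A^{T}$) and that the quotient identity lands on $xHx^{T}$ rather than, for example, $xH^{-1}x^{T}$. Fortunately the characteristic-$2$ hypothesis removes the sign bookkeeping that is otherwise the fiddliest part of the calculation.
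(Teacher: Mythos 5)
Your overall architecture is sound and three of the four ample-subspace closure properties are verified exactly as the paper does them. Your treatment of the substantive step, closure under $F\mapsto xFx^{T}$, diverges from the paper in an interesting way, but the reduction from singular to invertible $H$ has a genuine error that you need to address.

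The flaw is the claim that $\det(H+z_{1}I)$ is a monic polynomial of degree $k$ in $z_{1}$ with coefficients in $\mathbb{F}(z_{2},\dots,z_{n})$. That would be true if $H$ were independent of $z_{1}$, but $H$ is an arbitrary element of $H_{0}\subseteq\mathbb{F}(z)^{k\times k}$ and generally depends on $z_{1}$. A concrete counterexample is $H=z_{1}I_{k}$, which is in $H_{0}$, yet in characteristic $2$ gives $H+z_{1}I=2z_{1}I=0$, which is singular; likewise $H=I_{k}$ gives $H+z_{1}I$ invertible, but $H=(1+z_{1})I_{k}$ gives $H+z_{1}I=I_{k}+2z_{1}I_{k}=I_{k}$ — so whether the shift works depends delicately on $H$ and your uniform argument fails. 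A clean repair is available: $\det(H+\lambda I)$, as a polynomial in a single indeterminate $\lambda$ over the field $\mathbb{F}(z)$, is monic of degree $k$, hence has at most $k$ roots in $\mathbb{F}(z)$; since $\mathbb{F}[z_{1}]$ is infinite and sits inside $\mathbb{F}(z)$, there is a nonzero $q\in\mathbb{F}[z_{1}]$ with $\det(H+qI)\neq 0$. Such $q$ has an SBR by the $n=1$ argument in the proof of Theorem~\ref{thm:BessmertnyiRealizabilityThm}.(c) viewed as an $n$-variable pencil, and $qI_{k}$ then has an SBR by Lemma~\ref{LemKroneckerProdSchurComplWithIdentityMatrix}, so the characteristic-$2$ identity $xHx^{T}=x(H+qI)x^{T}+x(qI)x^{T}$ completes the reduction. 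Alternatively, you could follow the paper's route: reduce via Lemma~\ref{lem:SBRViaDiagElements} to the scalar diagonal entries $[F_{ii}]\in\mathbb{F}(z)^{1\times 1}$, for which nonzero automatically means invertible and no shifting is needed at all — this is the cleaner fix.

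By contrast with the paper, your invertible case is correct and takes a genuinely different route. The paper reduces to scalars, writes $r=e_{1}^{T}A^{-1}e_{1}$ for the sLP realizer $A$ of $r^{-1}$, and then applies Lemma~\ref{LemMatrixMultipSchurComplements} directly with $X=A$ (a bona fide pencil). You instead work at the matrix level, and since $X=H^{-1}=T/T_{22}$ is not itself a pencil, you substitute the full block $T$ in place of the $-X$ slot of the pencil in~\eqref{MultiSchurCompsCMatrix} and then peel off the $-T_{22}$ corner by the Crabtree--Haynsworth quotient identity. This enlarged-pencil construction is correct (the $(5,5)$ corner contraction touches only the $(4,4)$ entry, and the resulting matrix is exactly the one from Lemma~\ref{LemMatrixMultipSchurComplements} with $X=H^{-1}$), and it is conceptually pleasant because it avoids the scalar detour; what it loses is precisely the automatic-invertibility of nonzero scalars, which is what forces you into the shift argument with its attendant gap. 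Finally, a small point: the witness for $H_{0}\subsetneq H$ when $n\geq 2$ should be the $k\times k$ matrix $z_{1}z_{2}I_{k}$ (as the paper uses, invoking Lemma~\ref{lem:SBRViaDiagElements} plus Lemma~\ref{lemma2}), not the $1\times 1$ matrix $[z_{1}z_{2}]$ of Example~\ref{thm:NoSymBessRealzForSimpleProd}.
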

\begin{proof}
    First, if $n=1$ then $H_0(\mathcal{A},T)=H(\mathcal{A},T)$ by Theorem \ref{thm:BessmertnyiRealizabilityThm}.(c). Whereas, if $n\geq 2$ then it follows by Lemmas \ref{lem:SBRViaDiagElements} and \ref{lemma2} that 
    we always have $z_1z_2I_k\in H(\mathcal{A},T)\setminus H_0(\mathcal{A},T)$, where $I_k$ is the $k\times k$ identity matrix. We note the following properties: $H_0(\mathcal{A},T)$ is a subspace of $H(\mathcal{A},T)$ since $H_0(\mathcal{A},T)\subseteq H(\mathcal{A},T)$ (by Lemma \ref{LemSymmetrizationSchurComplement}), $0=\begin{bmatrix}
        0 & 0\\
        0 & 1
    \end{bmatrix}/[1]\in H_0(\mathcal{A},T)$, and $H_0(\mathcal{A},T)$ is closed under addition and scalar multiplication (by Lemmas \ref{LemScalarMultiSchurCompl} and \ref{LemSumSchurComp}). 
    
    Next, let $x\in \mathcal{A}$. Then $x+x^T, xx^T\in H_0(\mathcal{A},T)$ by Lemmas \ref{LemSymmetrizationSchurComplement}, \ref{LemMatrixMultipSchurComplements}, and \ref{LemMatrixRationalsAreBessRealizable}. It remains only to prove that $ xH_0(\mathcal{A},T)x^T\subseteq H_0(\mathcal{A},T)$. Let $F=F(z)\in H_0(\mathcal{A},T)$. Then, since $F^T=F$, there exists an upper triangular matrix $F_{upp}\in \mathbb{F}(z)^{k\times k}$ with zeros on the main diagonal such that $F-\operatorname{diag}F=F_{upp}+F_{upp}^T$, where   $\operatorname{diag}F=\sum_{i=1}^k e_i[F_{ii}]e_i^T\textbf{}\in\mathbb{F}(z)^{k\times k}$ is the matrix of the diagonal elements of $F$.  Hence 
    \begin{gather*}
        x(F_{upp}+F_{upp}^T)x^T=xF_{upp}+(xF_{upp})^T\in H_0(\mathcal{A},T),
    \end{gather*} 
    and the last property we need to prove is $x\operatorname{diag}Fx^T\in H_0(\mathcal{A},T)$. But by Lemma \ref{lem:SBRViaDiagElements}, we need only prove that \begin{gather*}
    \sum_{i=1}^k e_j^{T}xe_i[F_{ii}](e_j^{T}xe_i)^T=e_j^{T}x\operatorname{diag}Fx^Te_j\in \mathbb{F}(z)^{1\times 1}
    \end{gather*} 
    has an SBR for each $j=1,\ldots, k$. By Lemma \ref{LemSumSchurComp}, it suffices to prove $e_j^{T}xe_i[F_{ii}](e_j^{T}xe_i)^T\in \mathbb{F}(z)^{1\times 1}$ has an SBR for each $i,j=1,\ldots, k$. 
    
    Fix $i,j\in \{1,\ldots, k\}$. If $e_j^{T}xe_i[F_{ii}](e_j^{T}xe_i)^T=0$, then it clearly has an SBR. Hence, suppose that $e_j^{T}xe_i[F_{ii}](e_j^{T}xe_i)^T\not=0$ and let $r=[F_{ii}]$. Then $r=[p/q]$ for some $p,q\in \mathbb{F}[z]$ with $p,q\not=0$ and, since $F$ has an SBR by Lemma \ref{lem:SBRViaDiagElements}, $r$ also has an SBR. Hence, by Lemma \ref{LemInvOfASchurCompl}, $r^{-1}$ has an SBR. It follows that there exists a Bessmertny\u{\i} realizer $A=[A_{ij}]_{i,j=1,2}\in \mathbb{F}(z_1)^{m\times m}$ which is a symmetric linear matrix pencil such that $r^{-1}=A/A_{22}$. So by Lemma \ref{LemInvOfASchurCompl}, we have $r=e_1^{T}A^{-1}e_1$. We can now write
\begin{gather*}
   e_j^{T}xe_ir(e_j^{T}xe_i)^T=e_1^T(e_j^{T}xe_i\otimes I_m) A^{-1}(e_j^{T}xe_i\otimes I_m)^Te_1.
\end{gather*} 
Since $e_j^{T}xe_i\otimes I_m$ has an SBR (by Lemma \ref{LemMatrixRationalsAreBessRealizable}), it follows that $(e_j^{T}xe_i\otimes I_m) A^{-1}(e_j^{T}xe_i\otimes I_m)^T$ has an SBR (by Lemma \ref{LemMatrixMultipSchurComplements}), and so $e_j^{T}xe_ir(e_j^{T}xe_i)^T=e_1^T(e_j^{T}xe_i\otimes I_m) A^{-1}(e_j^{T}xe_i\otimes I_m)^Te_1$ has an SBR (by Lemma \ref{lem:MatrixMultOfSchurCompl}). Therefore, we have shown that $xFx^T\in H_0(\mathcal{A},T)$, which proves that $xH_0(\mathcal{A},T)x^T\subseteq H_0(\mathcal{A},T)$. This completes the proof. 
\end{proof}

\section{Completing the proof of Theorem \ref{thm:BessmertnyiRealizabilityThm}}\label{sec:SymmMatricesWithSBRs}

In this section we complete the proof of Theorem \ref{thm:BessmertnyiRealizabilityThm} that was started in Section \ref{sec:PrfMainThm}, where it was left to prove case {\upshape{(}}iii{\upshape{)}} in statements {\upshape{(}}c{\upshape{)}} and {\upshape{(}}d). We start with completing the proof of statement (iii).(c).
\begin{proof}[Proof of Theorem \ref{thm:BessmertnyiRealizabilityThm}.{\upshape{(}}c{\upshape{)}}]
    It remains to prove Theorem \ref{thm:BessmertnyiRealizabilityThm}.(c) assuming that $n\geq 2$ and $\operatorname{char}(\mathbb{F})=2$. By Lemma \ref{lem:SBRViaDiagElements}, we only need to prove that if $F(z)\in \mathbb{F}(z)^{1\times 1}$, then $F(z)$ has an SBR if and only if $F(z)$ is in the subspace \eqref{SBRSubspaceChar2}. This is equivalent to proving that 
    \begin{gather*}
        H_0(\mathcal{A}_1,T)=S\!+\! z_1S\!+\!\cdots \!+\! z_n S, \text{ where } \mathcal{A}_1=\mathbb{F}(z)^{1\times 1}, S=\operatorname{span}\{f^2:f\in \mathbb{F}(z)\}.
    \end{gather*}
    By Theorem \ref{thm:amplesubspace}, we know that $H_0(\mathcal{A}_1,T)$ is an ample subspace of $H(\mathcal{A}_1,T)$ which clearly implies $S+ z_1S+\cdots + z_n S\subseteq H_0(\mathcal{A}_1,T)$ since $1,z_1,\ldots, z_n\in H_0(\mathcal{A}_1,T)$. We now prove the reverse inclusion. Let $F(z)\in H_0(\mathcal{A}_1,T)$ so that $F(z)$ has an SBR. If $F=0$, then $F\in S+ z_1S+\cdots + z_n S$. Suppose $F\not=0$. Then $F=[p/q]$ for some $p,q\in \mathbb{F}[z]$ with $p,q\not=0$ and so $[pq]=[q]F[q]^T\in H_0(\mathcal{A}_1,T)$. Thus, it only remains to prove that
    \begin{equation}\label{SBRPolySubspaceChar2Part1}
            \begin{aligned}
\{F(z)\in \mathbb{F}[z]^{1\times 1}:F(z) \text{ has an SBR}\}=V&+ z_1V+\cdots + z_n V, \text{ where }\\ 
&V=\operatorname{span}\{q^2:q\in \mathbb{F}[z]\}.
\end{aligned}
    \end{equation}
First, it is now clear that 
\begin{gather*}
    V+ z_1V+\cdots + z_n V\subseteq \{F(z)\in \mathbb{F}[z]^{1\times 1}:F(z) \text{ has an SBR}\}.
\end{gather*} 
Now let $p\in \{F(z)\in \mathbb{F}[z]^{1\times 1}:F(z) \text{ has an SBR}\}$. As above, we may assume $p\not=0$. Then $p(z)=\sum_{\alpha\in (\mathbb{N}\cup \{0\})^n}c_{\alpha}z^{\alpha}$, where $c_{\alpha}\in \mathbb{F}$ is nonzero for all but finitely many $\alpha$, and so we can write $p(z)$ as
\begin{gather*}
    p(z)=\sum_{\alpha\in (\mathbb{N}\cup \{0\})^n}c_{\alpha}z^{\alpha}=\sum _{\beta \in \{0,1\}^n}\left[\sum_{\alpha\in (\mathbb{N}\cup \{0\})^n|\alpha\operatorname{mod}2 =\beta}c_{\alpha}\left(\prod_{i=1}^n{z_i^{\lfloor \alpha_i / 2 \rfloor}}\right)^2\right]z^{\beta}.
\end{gather*}

We will now prove the claim that
\begin{gather*}
     p_{\beta}(z):=\sum_{\alpha\in (\mathbb{N}\cup \{0\})^n|\alpha\operatorname{mod}2 =\beta}c_{\alpha}\prod_{i=1}^n{z_i^{2\lfloor \alpha_i / 2 \rfloor}}=0, \text{ if } \beta\in\{0,1\}^n\setminus\{0,e_1,\ldots, e_n\}.
\end{gather*}
Suppose it is not true, i.e., that $p_{\beta}\not=0$ for at least one $\beta$. Then, since $0\not=p=A/A_{22}$ with symmetric Bessmertny\u{\i} realizer $A=[A_{ij}]_{i,j=1,2}$, it follows by Lemma \ref{LemSchurDetFormula} that $\det A=p\det A_{22}\not=0$. We can now define 
\begin{gather*} 
0\not=q:=p\det A_{22}\prod_{\beta\in\{0,1\}^n\setminus\{0,e_1,\ldots, e_n\},p_{\beta}\not=0}p_{\beta}. 
\end{gather*}
As in the proof of Lemma \ref{lemma2}, let $\mathbb{G}$ be an infinite field extension of $\mathbb{F}$ (see footnote~\ref{footn:InfFieldExt}). Then there exists $\ell\in \mathbb{G}^n$ such that $\pi_{\ell^2}(q)$ is invertible in $\mathcal{R}(\ell^2)$, implying that $\pi_{\ell^2}(\det A_{22})$ and $\pi_{\ell^2}(p_{\beta})$ are invertible in $\mathcal{R}(\ell^2)$ for every $\beta$ such that $p_{\beta}\not=0$. It follows that $\pi_{\ell^2}(p)$ is realizable, and so, by Theorem \ref{thm:ring_realizable_linear}, $\pi_{\ell^2}(p)$ and thus $\operatorname{MULT_{\ell^2}}(p)=\rho_{\ell^2}(\pi_{\ell^2}(p))$ are linear in $\mathbb{G}(z)$ (and hence in $\mathbb{F}(z)$). As
\begin{align*}
   \operatorname{MULT_{\ell^2}}(p)&=\sum_{\alpha\in (\mathbb{N}\cup \{0\})^n}c_{\alpha}\prod_{i=1}^n{\ell_i^{2\lfloor \alpha_i / 2 \rfloor}z_i^{\alpha_i \operatorname{mod} 2}}\\
   &=\sum _{\beta \in \{0,1\}^n}\left(\sum_{\alpha\in (\mathbb{N}\cup \{0\})^n|\alpha\operatorname{mod}2 =\beta}c_{\alpha}\prod_{i=1}^n{\ell_i^{2\lfloor \alpha_i / 2 \rfloor}}\right)z^{\beta},
\end{align*}
it follows that 
\begin{gather*}
   p_{\beta}(\ell)=\sum_{\alpha\in (\mathbb{N}\cup \{0\})^n|\alpha\operatorname{mod}2 =\beta}c_{\alpha}\prod_{i=1}^n{\ell_i^{2\lfloor \alpha_i / 2 \rfloor}}=0, \text{ if } \beta\not\in\{0,e_1,\ldots, e_n\}.
\end{gather*}
But this implies that
\begin{gather*}
     \pi_{\ell^2}(p_{\beta})=0, \text{ if } \beta\not\in\{0,e_1,\ldots, e_n\},
\end{gather*}
a contradiction that $\pi_{\ell^2}(p_{\beta})$ is invertible in $\mathcal{R}(\ell^2)$ for every $\beta$ such that $p_{\beta}\not=0$ (and there is at least one $p_{\beta}\not=0$). This proves the claim, and hence
\begin{gather*}
    p(z)=\sum_{\alpha\in (\mathbb{N}\cup \{0\})^n}c_{\alpha}z^{\alpha}=\sum _{\beta \in \{0,e_1,\ldots, e_n\}}\left[\sum_{\alpha\in (\mathbb{N}\cup \{0\})^n|\alpha\operatorname{mod}2 =\beta}z^{\beta}c_{\alpha}\left(\prod_{i=1}^n{z_i^{\lfloor \alpha_i / 2 \rfloor}}\right)^2\right]\\
    \hspace{-0.55in}\in V+ z_1V+\cdots + z_n V.
\end{gather*}
Thus $\{F(z)\in \mathbb{F}[z]^{1\times 1}:F(z) \text{ has an SBR}\}\subseteq V+ z_1V+\cdots + z_n V$, and we have now proven the equality \eqref{SBRPolySubspaceChar2Part1}. This completes the proof.
\end{proof}

We will now complete the proof of statement (iii).(d) of Theorem \ref{thm:BessmertnyiRealizabilityThm}. We begin with some lemmas.

\begin{lemma}\label{lem:hSBRViaDiagElements}
    Suppose $F\in\mathbb{F}(z)^{k\times k}$. Then $F$ has a homogeneous symmetric Bessmertny\u{\i} realization if and only if $F\in\mathbb{F}(z)_1^{k\times k}, F^T=F,$ and the $i$th diagonal element $[F_{ii}]\in \mathbb{F}(z)^{1\times 1}$ of the matrix $F$ has a homogeneous symmetric Bessmertny\u{\i} realization for each $i=1,\ldots, k$.
\end{lemma}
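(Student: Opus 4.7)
The plan is to mirror the proof of Lemma \ref{lem:SBRViaDiagElements} essentially verbatim, exploiting the fact that each of the Schur complement constructions invoked there preserves homogeneity: Lemmas \ref{LemScalarMultiSchurCompl}, \ref{LemSumSchurComp}, \ref{LemSymmetrizationSchurComplement}, and \ref{lem:MatrixMultOfSchurCompl} all explicitly promise that an hLP (resp.\ hsLP) input produces an hLP (resp.\ hsLP) output. Consequently, every intermediate realizer built in the SBR proof stays homogeneous provided we start from homogeneous ingredients and provided we substitute Theorem \ref{thm:BessmertnyiRealizabilityThm}.(b) for Lemma \ref{LemMatrixRationalsAreBessRealizable} when we need a realizer for an auxiliary off-diagonal piece.

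For the forward direction, suppose $F$ has an hSBR $F(z) = A(z)/A_{22}(z)$ with $A(z) = [A_{ij}(z)]_{i,j=1,2}$ an hsLP, so $A(z) \in \mathbb{F}(z)_1^{m\times m}$. By Lemma \ref{lem:homogRationalMatrixFuncsProperties} applied to the Schur complement formula $F = A_{11} - A_{12}A_{22}^{-1}A_{21}$, we get $F \in \mathbb{F}(z)_1^{k\times k}$, and $A^T = A$ gives $F^T = F$ as in the proof of part (c). For the diagonal entries, write $[F_{ii}] = e_i^T F e_i$ and apply Lemma \ref{lem:MatrixMultOfSchurCompl} with $U = e_i^T$ and $V = U^T = e_i$; the ``Furthermore'' clause of that lemma produces an hsLP realizer for $[F_{ii}]$.

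For the reverse direction, assume $F \in \mathbb{F}(z)_1^{k\times k}$, $F^T = F$, and each $[F_{ii}]$ admits an hSBR. First, each $[F_{ii}] \in \mathbb{F}(z)_1^{1\times 1}$ by the forward direction, so Lemma \ref{lem:MatrixMultOfSchurCompl} (again with $V = U^T$) shows that each $e_i [F_{ii}] e_i^T$ has an hSBR, and then Lemma \ref{LemSumSchurComp} yields an hSBR for $\operatorname{diag} F = \sum_{i=1}^k e_i [F_{ii}] e_i^T$. Let $F_{upp} \in \mathbb{F}(z)^{k\times k}$ denote the strictly upper triangular part of $F$, so that $F - \operatorname{diag} F = F_{upp} + F_{upp}^T$ and $F_{upp} \in \mathbb{F}(z)_1^{k\times k}$ since both $F$ and $\operatorname{diag} F$ are. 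By Theorem \ref{thm:BessmertnyiRealizabilityThm}.(b), $F_{upp}$ has an hBR, and Lemma \ref{LemSymmetrizationSchurComplement} then upgrades this to an hSBR for $F_{upp} + F_{upp}^T$. A final application of Lemma \ref{LemSumSchurComp} shows that $F = \operatorname{diag} F + (F_{upp} + F_{upp}^T)$ has an hSBR.

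The argument is essentially bookkeeping; there is no genuine obstacle, since every step in the proof of Lemma \ref{lem:SBRViaDiagElements} transfers once one checks that the relevant structural lemmas preserve the homogeneous property. The only mild subtlety is that the reverse direction must invoke Theorem \ref{thm:BessmertnyiRealizabilityThm}.(b) rather than Lemma \ref{LemMatrixRationalsAreBessRealizable} to realize $F_{upp}$, which is precisely what forces the a priori hypothesis $F \in \mathbb{F}(z)_1^{k\times k}$ to appear in the statement.
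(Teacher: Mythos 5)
Your proof mirrors the paper's argument essentially line by line: the forward direction via Lemmas \ref{lem:MatrixMultOfSchurCompl}, \ref{LemSymmetrizationSchurComplement}, and \ref{lem:homogRationalMatrixFuncsProperties}, and the reverse direction via the decomposition $F = \operatorname{diag}F + (F_{upp}+F_{upp}^T)$, realizing $\operatorname{diag}F$ through Lemmas \ref{lem:MatrixMultOfSchurCompl} and \ref{LemSumSchurComp} and realizing $F_{upp}+F_{upp}^T$ through Theorem \ref{thm:BessmertnyiRealizabilityThm}.(b) followed by Lemma \ref{LemSymmetrizationSchurComplement}. This matches the paper's proof, including the observation that Theorem \ref{thm:BessmertnyiRealizabilityThm}.(b) replaces Lemma \ref{LemMatrixRationalsAreBessRealizable} and that this is what forces the extra hypothesis $F\in\mathbb{F}(z)_1^{k\times k}$.
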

\begin{proof}
    First, let $e_i$ denote the $i$th standard basis vector of $\mathbb{F}^k=\mathbb{F}^{k\times 1}$ so that $[F_{ii}]=e_i^TFe_i$. It follows immediately from this and Lemma \ref{lem:MatrixMultOfSchurCompl} that, if $F$ has a hSBR, then so does $[F_{ii}]$ for each $i=1,\ldots, k$. Also, by Lemma \ref{LemSymmetrizationSchurComplement}, we have $F^T=F$, and, by Lemma \ref{lem:homogRationalMatrixFuncsProperties}, we have $F\in\mathbb{F}(z)_1^{k\times k}$. We will now prove the converse. Suppose $F\in\mathbb{F}(z)_1^{k\times k}$, $F^T=F,$ and the $i$th diagonal element $[F_{ii}]\in \mathbb{F}(z)^{1\times 1}$ of the matrix $F$ has a hSBR for each $i=1,\ldots, k$. We have already shown that this implies $[F_{ii}]\in \mathbb{F}(z)_1^{1\times 1}$, $i=1,\ldots, k$. Consider the matrix $\operatorname{diag}F\in\mathbb{F}(z)^{k\times k}$ of the diagonal elements of $F$, i.e., $\operatorname{diag}F=\sum_{i=1}^k e_i[F_{ii}]e_i^T$. It follows from Lemma \ref{lem:MatrixMultOfSchurCompl} that $e_i[F_{ii}]e_i^T$ has a hSBR for each $i=1,\ldots, k$, and hence $\operatorname{diag}F$ has a hSBR by Lemma \ref{LemSumSchurComp}. Next, both $F\in\mathbb{F}(z)_1^{k\times k}$ and $F^T=F$ imply the existence of an upper triangular matrix $F_{upp}\in \mathbb{F}(z)_1^{k\times k}$ with zeros on the main diagonal such that $F-\operatorname{diag}F=F_{upp}+F_{upp}^T$.  By Theorem \ref{thm:BessmertnyiRealizabilityThm}.(b) and Lemma \ref{LemSymmetrizationSchurComplement}, we have that $F_{upp}+F_{upp}^T$ has a hSBR. We then obtain, by Lemma \ref{LemSumSchurComp}, that $F=F_{upp}+F_{upp}^T+\operatorname{diag}F$ has a hSBR.
\end{proof}

\begin{lemma}\label{lem:hSBRCongruenceLemma}
    If $F\in\mathbb{F}(z)^{k\times k}$ has a homogeneous symmetric Bessmertny\u{\i} realization and $x\in \mathbb{F}(z)_0^{k\times k}$, then so does $xFx^T$.
\end{lemma}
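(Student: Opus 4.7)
The plan is to reduce the statement to a scalar problem via Lemma \ref{lem:hSBRViaDiagElements}, exploit the cancellation of cross-terms afforded by $\operatorname{char}(\mathbb{F})=2$, and then construct the desired hSBR by a single application of Lemma \ref{LemMatrixMultipSchurComplements}. First, I would note that $xFx^T$ lies in $\mathbb{F}(z)_1^{k\times k}$ (the degrees add as $0+1+0=1$) and is symmetric because $F^T=F$. By Lemma \ref{lem:hSBRViaDiagElements}, it then suffices to exhibit an hSBR for each diagonal entry $[(xFx^T)_{ii}]$. Expanding, $(xFx^T)_{ii}=\sum_{j,l} x_{ij}F_{jl}x_{il}$; for each pair $j\ne l$, the contributions from $(j,l)$ and $(l,j)$ combine to $x_{ij}x_{il}(F_{jl}+F_{lj})=2x_{ij}x_{il}F_{jl}=0$ since $F_{jl}=F_{lj}$ and $\operatorname{char}(\mathbb{F})=2$. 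Thus $(xFx^T)_{ii}=\sum_{j=1}^k x_{ij}^2 F_{jj}$, and since each $[F_{jj}]$ has an hSBR by Lemma \ref{lem:hSBRViaDiagElements} applied to $F$, Lemma \ref{LemSumSchurComp} reduces the task to showing that $[x_{ij}^2 F_{jj}]$ has an hSBR for every $i,j$.

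The core scalar problem is then: given $g\in\mathbb{F}(z)_0$ and $r\in\mathbb{F}(z)_1^{1\times 1}$ with an hSBR, construct an hSBR for $g^2 r$. If $g=0$ or $r=0$ this is trivial, so assume both are nonzero. Write $r=A/A_{22}$ with $A\in\mathbb{F}[z]^{m\times m}$ hsLP; since both $r$ and $A_{22}$ are invertible, Lemma \ref{LemInvOfASchurCompl} implies that $A$ is invertible with $(A^{-1})_{11}=r^{-1}$. The product $gr$ lies in $\mathbb{F}(z)_1$, so by Theorem \ref{thm:BessmertnyiRealizabilityThm}(b) it admits an hBR $gr=A''/A''_{22}$ with $A''$ hLP; Lemma \ref{LemKroneckerProdSchurComplWithIdentityMatrix} then yields $grI_m=A'/A'_{22}$ with $A':=A''\otimes I_m$ an $m\times m$ hLP realizer of the scalar matrix $grI_m$.

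Next, I would apply Lemma \ref{LemMatrixMultipSchurComplements} with the outer pencils $A'$ and $B:=(A')^T$ and middle pencil $X:=A$: all three are hLP, $B=(A')^T$ by construction, and $X=X^T$ since $A$ is symmetric, so the lemma produces an hsLP matrix $C$ whose Schur complement is
\[
C/C_{22}=(grI_m)\,A^{-1}\,(grI_m)^T=(gr)^2 A^{-1}.
\]
A final application of the symmetric version of Lemma \ref{lem:MatrixMultOfSchurCompl} with the constant matrices $U=e_1^T$ and $V=U^T=e_1$ then produces an hsLP realizer of
\[
e_1^T\bigl[(gr)^2 A^{-1}\bigr]e_1=(gr)^2(A^{-1})_{11}=(gr)^2 r^{-1}=g^2 r,
\]
yielding the desired hSBR.

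The main obstacle is that inversion converts degree $1$ into degree $-1$: although $r^{-1}$ has an SBR by Lemma \ref{LemInvOfASchurCompl}, it cannot have an hSBR, so one cannot directly mimic the proof of Theorem \ref{thm:amplesubspace}, which hinged on an sLP realizer of $r^{-1}$. The key trick in the plan above is to use the hsLP realizer $A$ of $r$ itself as the middle pencil in Lemma \ref{LemMatrixMultipSchurComplements}, absorbing the degree-zero scalar $g$ into the degree-one quantity $gr$ so that every matrix pencil appearing in the construction is genuinely linear and homogeneous.
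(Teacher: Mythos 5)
Your proof is correct, and it takes a genuinely different route from the paper's. The paper proves the lemma by a dehomogenization argument: setting $z_n=1$ turns the hSBR of $F$ into an SBR of $F(z_/)$ (where $z_/=(z_1,\ldots,z_{n-1})$); Theorem \ref{thm:amplesubspace} --- the ample-subspace closure $xH_0(\mathcal{A},T)x^T\subseteq H_0(\mathcal{A},T)$ --- then gives an SBR for $x(z_/)F(z_/)x(z_/)^T$; and rehomogenizing via Lemma \ref{lem:SchurComplHomogen}, using $xFx^T=z_n\,x(z_//z_n)F(z_//z_n)x(z_//z_n)^T$, recovers an hSBR. You instead remain entirely in the homogeneous category: you reduce to diagonal entries with Lemma \ref{lem:hSBRViaDiagElements}, use $\operatorname{char}(\mathbb{F})=2$ to cancel the cross-terms so that $(xFx^T)_{ii}=\sum_j x_{ij}^2F_{jj}$, and then build an hSBR of $g^2r$ directly by feeding the hsLP realizer $A$ of $r$ \emph{itself} (rather than a realizer of $r^{-1}$, as happens inside the proof of Theorem \ref{thm:amplesubspace}) into the middle slot of Lemma \ref{LemMatrixMultipSchurComplements}, with $gr\,I_m$ and its transpose on the outside, then recovering $g^2r=(gr)^2(A^{-1})_{11}$ by Lemma \ref{LemInvOfASchurCompl} and compressing with $e_1^T,e_1$ via Lemma \ref{lem:MatrixMultOfSchurCompl}. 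You correctly diagnose why one cannot simply rerun the proof of Theorem \ref{thm:amplesubspace} with ``hsLP'' in place of ``sLP'': $r^{-1}\in\mathbb{F}(z)_{-1}$ has no hSBR, and your absorption of the degree-zero $g$ into the degree-one quantity $gr$ is exactly what keeps every pencil homogeneous. The paper's route buys wholesale reuse of the Jordan-algebra machinery; yours is self-contained and bypasses Theorem \ref{thm:amplesubspace} altogether, at the mild cost of invoking $\operatorname{char}(\mathbb{F})=2$ explicitly (which is the standing assumption for this portion of the paper, and the lemma is anyway immediate from Theorem \ref{thm:BessmertnyiRealizabilityThm}(d)(ii) when $\operatorname{char}(\mathbb{F})\ne2$). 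One cosmetic slip: $A'=A''\otimes I_m$ is not ``an $m\times m$ hLP realizer''; its \emph{Schur complement} is $m\times m$, while $A'$ itself has side $m\cdot m'$ where $A''$ is $m'\times m'$ --- this does not affect the argument.
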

\begin{proof}
    First note that for any $h\in \mathbb{F}(z)_{d}^{k\times k}$ we have that $h(z)=z_n^{d}h(z_//z_n)$, where $z_{/}=(z_1,\ldots,z_{n-1})$. Let $x\in \mathbb{F}(z)_0^{k\times k}$. If $F\in\mathbb{F}(z)^{k\times k}$ has an hSBR, then $F\in\mathbb{F}(z)_1^{k\times k}$ has an SBR, and hence so does $F(z_/)$. It follows from Theorem \ref{thm:amplesubspace} that $x(z_/)^2F(z_/)x(z_/)^T$ has an SBR, implying that $x(z)F(z)x(z)^T=z_nx(z_//z_n)F(z_//z_n)x(z_//z_n)^T$ has an hSBR (we also used the fact that $xFx^T\in \mathbb{F}(z)_{1}^{k\times k}$ by Lemma \ref{lem:homogRationalMatrixFuncsProperties}).
\end{proof}

\begin{proof}[Proof of Theorem \ref{thm:BessmertnyiRealizabilityThm}.{\upshape{(}}d{\upshape{)}}]
     It remains to prove Theorem \ref{thm:BessmertnyiRealizabilityThm}.(d) assuming $n\geq 3$ and $\operatorname{char}(\mathbb{F})=2$. Provided $F(z)\in \mathbb{F}(z)_1^{1\times 1}$, then Lemma \ref{lem:hSBRViaDiagElements} implies we only need to prove that $F(z)$ has an hSBR if and only if $F(z)$ is in the subspace \eqref{hSBRSubspaceChar2}, i.e., in
     \begin{gather*}
        z_1S_0+\cdots + z_n S_0, \text{ where } S_0=\operatorname{span}\{f^2:f\in \mathbb{F}(z)_0\}.
    \end{gather*}
     Next, by Lemma \ref{lem:hSBRCongruenceLemma}, it follows that every element of $z_1S_0+\cdots + z_n S_0$ has an hSBR. To complete the proof, we show that $F(z)\in \mathbb{F}(z)_1^{1\times 1}$ and $F(z)$ having an hSBR implies $F\in z_1S_0+\cdots + z_n S_0$. If $F(z)\in \mathbb{F}(z)_1^{1\times 1}$ and $F(z)$ has an hSBR, then $F(z_/)\in \mathbb{F}(z_/)^{1\times 1}$ [$z_{/}=(z_1,\ldots,z_{n-1})$] has an SBR. Therefore, by Theorem \ref{thm:BessmertnyiRealizabilityThm}.(c), we know that $F(z_/)\in S+ z_1S+\cdots + z_{n-1} S.$ It follows from this that $F(z)=z_nF(z_//z_n)\in z_1S_0+\cdots + z_n S_0$, as desired.
\end{proof}

\section*{Acknowledgment} 
AW is grateful to the Simons Foundation for the support through grant MPS-TSM-00002799 and to the National Science Foundation for support through grant DMS-2410678.

\bibliographystyle{alpha}
\bibliography{mybibliography}
\end{document}